\numberwithin{equation}{subsection}
\theoremstyle{plain}
\newtheorem{thm}{Theorem}[section]
\newtheorem*{thstar}{Theorem}
\newtheorem{cor}[thm]{Corollary}
\newtheorem{lemma}[thm]{Lemma}
\newtheorem{prop}[thm]{Proposition}
\newtheorem{question}[thm]{Question}
\theoremstyle{remark}
\newtheorem{rem}[thm]{Remark}
\newtheorem{ex}[thm]{Example}
\newtheorem{conj}[thm]{Conjecture}
\theoremstyle{definition}
\newtheorem{defi}[thm]{Definition}
\def\GM{\overline{Q}} 
\def\LG{LG}
\def\Pe{P}
\def\Grass{Grass}
\def\Mbar{\overline{{\mathcal M}}}
\def\LLL{\Lambda}
\def\PD{PD}
\def\TTT{\mathbb{T}}
\def\Mo{{\mathcal M}_\alpha}
\newcommand\Ls{\Lambda}
\newcommand\Lb{{\mathcal{L}}}
\DeclareMathOperator\grad{grad}
\subjclass[2020]{primary: 62R01, 14M17, 14N10, 14E05 secondary: 14C17, 14M15, 14Q15, 60G15}
\newcommand\ignore[1]{}
\DeclareMathOperator{\HH}{H}
\DeclareMathOperator{\pt}{pt}
\newcommand\CC{{\mathbb{C}}}
\newcommand\PP{{\mathbb{P}}}
\newcommand\RR{{\mathbb{R}}}
\newcommand\ZZ{{\mathbb{Z}}}
\newcommand\SSS{{\mathbb{S}}}
\def\C{{\mathbb C}}
\def\P{{\mathbb P}}
\def\cJ{{\mathcal J}}
\def\cO{{\mathcal{O}}}
\def\operatorname#1{\mathop{\rm #1}\nolimits}
\def\Hom{\operatorname{Hom}}
\def\Hom{\operatorname{Hom}}
\def\deg{\operatorname{deg}}
\newcommand{\Chi}{\ensuremath \raisebox{2pt}{$\chi$}}
\newcommand{\pb}{\ar@{}[dr]|{\text{\pigpenfont J}}}
\newcommand{\xleftrightarrow}[2][]{\ext@arrow 3359\leftrightarrowfill@{#1}{#2}}
\newcommand{\xdasharrow}[2][->]{
\tikz[baseline=-\the\dimexpr\fontdimen22\textfont2\relax]{
\node[anchor=south,font=\scriptsize, inner ysep=1.5pt,outer xsep=2.2pt](x){#2};
\draw[shorten <=3.4pt,shorten >=3.4pt,dashed,#1](x.south west)--(x.south east);
}}
\providecommand{\symdif}{\mathbin{\mathpalette\xdotminus\relax}}
\newcommand{\xdotminus}[2]{%
  \ooalign{\hidewidth$\vcenter{\hbox{$#1\dot{}$}}$\hidewidth\cr$#1-$\cr}%
}
\newcommand\iso{{\ \cong\ }}
\begin{document}
\title{Maximum likelihood degree, complete quadrics and ${\mathbb C}^*$-action}

\author[Micha{\l}ek]{Mateusz Micha{\l}ek}
\address{Max Planck Institute for Mathematics in the Sciences,
Leipzig, Germany
}
\author[Monin]{Leonid Monin}
\address{University of Bristol, School of Mathematics, BS8 1TW, Bristol, UK}
\author[Wi{\'s}niewski]{Jaros{\l}aw A. Wi{\'s}niewski}
\address{Istitute of Mathematics,
University of Warsaw, 
PL-02-097 Warszawa
}

\begin{abstract}
We study the maximum likelihood (ML) degree of linear concentration models in algebraic statistics. We relate it to an intersection problem on the variety of complete quadrics. This allows us to provide an explicit, basic, albeit of high computational complexity, formula for the ML-degree.  The variety of complete quadrics is an exact analog for symmetric matrices of the permutohedron variety for the diagonal matrices.
\end{abstract}

\thanks{\noindent The project has been supported by Polish National Science
  Center grant 2016/23/G/ST1/ 04828 and EPSRC Early Career Fellowship EP/R023379/1. 
	JW thanks MPI MiS for hospitality. }
\maketitle

\section{Introduction}
With this article we would like to initiate  the program of studying inversion of matrices by the geometry of $\CC^*$-actions on homogeneous varieties. Our main motivations come from algebraic statistics, precisely linear concentration models, based on the multivariate Gaussian model. We present the main question in two equivalent versions:

\begin{itemize}[leftmargin=*]
\item {\it Given a general subspace $\Ls$ of the space of symmetric matrices, what is the degree of the variety obtained by inverting all matrices in $\Ls$?}
\item {\it What is the maximum likelihood (ML) degree of a general linear concentration model?}
\end{itemize}

The computation of the maximum likelihood degree for linear concentration model has recently attracted a lot of attention, consult e.g.~\cite{StUh, SethBook, uhler2012geometry, sturmfels2019estimating}. Our main new input is to relate this invariant to the \emph{variety of complete quadrics} $\GM$. We show that it is a smooth resolution of the graph of the rational map of inversion of symmetric matrices represented as elements in $\PP(S^2V)$, the projectivized symmetric square of a vector space $V$. 
\begin{thstar}[Corollary \ref{cor:gaussmoduli}]
The variety $\GM$ is a smooth projective variety, which contains an open part parametrizing general orbits of a special $\CC^*$-action on the Grassmannian of Lagrangian spaces in $V\oplus V^*$. It comes with two natural morphisms $\pi_0:\GM\to  \PP(S^2V)$ and $\pi_\infty:\GM\to  \PP(S^2V^*)$ which resolve the inversion map $\Phi: \PP(S^2V)\dashrightarrow \PP(S^2V^*)$. Moreover, $\GM$ admits an algebraic action of $SL(V)$ which makes the above morphisms equivariant.
\end{thstar}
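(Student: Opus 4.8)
\emph{The plan is to build everything from the symplectic geometry of} $W = V\oplus V^*$ \emph{with its standard form} $\omega\bigl((v,\phi),(v',\phi')\bigr)=\phi'(v)-\phi(v')$, \emph{and the Lagrangian Grassmannian} $\LG=\LG(W)$. First I would record the basic dictionary: a symmetric form $A\in S^2V$, read as a map $A\colon V^*\to V$, has Lagrangian graph $L_A=\{(A\phi,\phi):\phi\in V^*\}$, and $A\mapsto L_A$ identifies $S^2V$ with the chart $U_0\subset\LG$ of Lagrangians transverse to $V^*=0\oplus V^*$; symmetrically $S^2V^*$ is the chart $U_\infty$ of Lagrangians transverse to $V=V\oplus 0$. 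On $U_0\cap U_\infty$ a single Lagrangian is the graph of an invertible $A$ and of $A^{-1}$ at once, so the inversion $\Phi$ is literally the change of chart. The one-parameter group $t\cdot(v,\phi)=(t^{-1}v,t\phi)$ preserves $\omega$, hence acts on $\LG$; on $U_0$ it is $L_A\mapsto L_{t^{-2}A}$, so the orbit of an invertible $A$ is a curve whose closure is a $\PP^1$ joining the two isolated fixed points $V$ and $V^*$, its only invariant being the class $[A]\in\PP(S^2V)$.

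Next I would take $\GM$ to be the classical variety of complete quadrics and exploit its realization as the closure of the image of the invertible locus under $q\mapsto\bigl(q,\Wedge{2}q,\dots,\Wedge{n-1}q\bigr)$ inside $\prod_i\PP(S^2\Wedge{i}V)$, or equivalently as the iterated blow-up of $\PP(S^2V)$ along the successive (smooth, $\SL(V)$-invariant) rank loci $\{\rank\le k\}$. Either description yields smoothness and projectivity as a standard output. The projections $\pi_0$ and $\pi_\infty$ are then the morphisms remembering the first factor $q\in\PP(S^2V)$ and the last factor $\Wedge{n-1}q$; since $\Wedge{n-1}V\cong V^*\otimes\Wedge{n}V$ one has $\PP(S^2\Wedge{n-1}V)\cong\PP(S^2V^*)$, and on invertible $A$ the last compound form is the adjugate $\det(A)\,A^{-1}$, so $\pi_\infty([A])=[A^{-1}]$. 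Both maps are regular because $\GM$ is proper and lands in products of projective spaces, and $(\pi_0,\pi_\infty)$ is birational onto the graph of $\Phi$; this is the sense in which $\GM$ resolves inversion.

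To match this with the orbit picture I would show that the orbit map $L_A\mapsto[A]$ on the invertible part of $U_0$ realizes the geometric quotient of the general-orbit locus of $\LG$ and that its image is exactly the common open subset of $\GM$ on which $\pi_0$ and $\pi_\infty$ are isomorphisms. The key input is the Bia\l{}ynicki--Birula analysis of the $\CC^*$-action: its source and sink are the points $V^*$ and $V$, its intermediate fixed components are the Grassmannians $\mathrm{Gr}(k,V)$ (via $L_V\mapsto L_V\oplus L_V^{\perp}$), the general orbit closures are the $\PP^1$'s found above, and two invertible forms share an orbit iff they are proportional; hence the general orbits are parametrized precisely by the invertible locus $\PP(S^2V)^{\mathrm{inv}}\hookrightarrow\GM$. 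Finally, $\SL(V)$ acts on $V$ and contragradiently on $V^*$, preserving $\omega$ and commuting with the $\CC^*$-action; it thus acts on $\LG$, on each rank locus, and functorially on the blow-up $\GM$, with $\pi_0,\pi_\infty$ equivariant because both $q$ and its adjugate transform naturally.

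\emph{The main obstacle} is the boundary matching hidden in the two middle steps: one must prove that $\pi_\infty$ extends to a genuine morphism over the whole indeterminacy locus of $\Phi$ (the singular quadrics), and that the complete-quadric boundary coincides with the degeneration of general orbits onto the non-extremal fixed Grassmannians. Concretely, the difficult point is to show that the limit of $L_{t^{-2}A}$ at a boundary point, together with the forms induced on the kernel/image filtration of $A$, records exactly the complete-quadric data $\bigl(q,\Wedge{2}q,\dots,\Wedge{n-1}q\bigr)$. Once this identification of the orbit-closure compactification in $\LG$ with the variety of complete quadrics is in place, regularity of both projections and the equivariance statements follow formally.
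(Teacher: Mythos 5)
Your proposal takes a genuinely different route from the paper: the paper never touches the classical blow-up or multi-Veronese constructions. Instead it \emph{defines} $\GM$ as the connected component of the fixed locus of the induced $\CC^*$-action on the Kontsevich space $\Mbar_{0,0}(\LG,\beta)$ containing the class of a general orbit; smoothness then comes from Fulton--Pandharipande's theorem on stable maps to convex varieties (the relevant stable maps have no automorphisms) combined with Iversen's theorem that fixed-point components of $\CC^*$-actions on smooth loci are smooth, the ``open part parametrizing orbits'' is tautological from the moduli interpretation, and $\pi_0,\pi_\infty$ are defined by tangent directions at source and sink, with the identification of $\Phi$ as matrix inversion done by an explicit $[A\,\vert\,B]$ computation. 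Against that, your construction has a concrete error at its foundation: the rank loci $\{\rank\le k\}\subset\PP(S^2V)$ are \emph{not} smooth for $2\le k\le n-2$ (each is singular along the next smaller rank locus), so the iterated blow-up must be taken along \emph{strict transforms}, and the fact that these become smooth at each stage --- equivalently, that the closure of the image of $q\mapsto(q,\Wedge{2}q,\dots,\Wedge{n-1}q)$ is smooth and agrees with that iterated blow-up --- is a nontrivial classical theorem (Semple, Vainsencher, De Concini--Procesi), not a ``standard output.'' This is exactly the difficulty the paper's moduli definition is engineered to avoid. The gap is repairable by citing the classical literature, but as written the justification is false.

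The second gap is the one you flag yourself and then leave unresolved: the identification of the complete-quadric boundary with degenerations of general orbits into chains through the intermediate fixed Grassmannians. Your own logic is also confused about what depends on it. Regularity of $\pi_\infty$ and $SL(V)$-equivariance do \emph{not} require it --- with the multi-Veronese definition the projections are restrictions of projections from a product of projective spaces, hence regular, and equivariance is functorial --- so declaring that they ``follow formally'' only once the identification is in place misstates your own argument. Conversely, if the statement is to be about the paper's $\GM$, i.e.\ the moduli space whose boundary points are chains of $\CC^*$-orbit closures (the structure that powers all the later fixed-point and $K$-theoretic computations), then this identification is precisely the missing content, and nothing in your two classical descriptions supplies it. So the proof as written is incomplete either way: it either carries an admitted unproved step that you declare essential, or, read charitably, it establishes the stated properties for a classical variety that has not been identified with the orbit-moduli space the statement is actually about.
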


The study of the variety $\GM$ is a very classical topic in algebraic geometry going back to the fundamental results of Schubert \cite{Schubert1879} from 19-th century.  The cohomological properties of the variety of complete quadrics were addressed more recently in \cite{bifet1990cohomology, DeConciniProcesi1, DeConciniProcesi2, LaksovCompleteQuadrics}.

Therefore we show far reaching relations between the new developments in algebraic statistics and the classical geometry of complete quadrics. This leads to new, effective methods. In particular, the computation of the ML-degree translates directly to an enumerative intersection problem on $\GM$. We address it using the action of a Cartan torus in the group $SL(V)$; in particular we use equivariant $K$-theory and cohomology. Our results provide a basic formula for the ML-degree.

\begin{thstar}[cf.~Corollary \ref{cor:sum}]
The $ML$-degree is a sum over the finite number of torus fixed points on the variety $\GM$ of rational numbers assigned to each of them. Both, the enumeration of such fixed points (Proposition \ref{prop:fixedpoints}), and computation of the rational numbers (Proposition \ref{prop:orbits} and Corollary \ref{cor:sum}) is completely explicit.
\end{thstar}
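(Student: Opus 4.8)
The plan is to reduce the computation of the $ML$-degree to an equivariant intersection number on $\GM$ and then evaluate that number by torus localization. By the results identifying the $ML$-degree with an intersection problem on $\GM$, the $ML$-degree equals the degree on $\GM$ of a product of pullbacks of the two hyperplane classes under the resolving morphisms. Writing $H_0=\pi_0^*\cO(1)$ and $H_\infty=\pi_\infty^*\cO(1)$, the $ML$-degree is a number of the form $\int_{\GM}H_0^{a}H_\infty^{b}$ with $a+b=\dim\GM$, where $a$ is the codimension of the general subspace $\Ls$ in $\PP(S^2V)$ (the class $\pi_0^*[\Ls]=H_0^{a}$) and $b$ is the dimension of the inverted variety, cut out by $b$ generic hyperplanes in $\PP(S^2V^*)$. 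The first task is therefore to fix these classes and record the precise top-degree monomial whose integral computes the $ML$-degree.

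The key tool is the Atiyah--Bott--Berline--Vergne localization formula. By Corollary \ref{cor:gaussmoduli} the morphisms $\pi_0$ and $\pi_\infty$ are $SL(V)$-equivariant, so after restricting the action to a Cartan (maximal) torus $T\subset SL(V)$ the classes $H_0$ and $H_\infty$ lift canonically to $T$-equivariant classes: the natural $T$-linearizations of $\cO(1)$ on $\PP(S^2V)$ and $\PP(S^2V^*)$ pull back along the equivariant $\pi_0,\pi_\infty$. Since $\GM$ is smooth and projective and the integrand has top degree, the ordinary integral agrees with the equivariant one, and localization expresses it as
\[
\int_{\GM}H_0^{a}H_\infty^{b}=\sum_{p\in\GM^{T}}\frac{(H_0|_p)^{a}\,(H_\infty|_p)^{b}}{e^{T}(T_p\GM)},
\]
where $e^{T}(T_p\GM)$ is the product of the weights of the $T$-action on the tangent space at $p$. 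The left-hand side is a constant (equal to the $ML$-degree), whereas each summand is a rational function of the equivariant parameters; specializing those parameters to the actual weights of $T$ turns each summand into a rational number, which is exactly the asserted shape of the formula.

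It remains to make the two ingredients of each summand explicit. First one enumerates the fixed locus $\GM^{T}$ and verifies that it is finite: using the description of $\GM$ as a resolution of the inversion map $\Phi$ together with its realization via the Lagrangian Grassmannian and the iterated blow-up construction, the $T$-fixed points acquire a purely combinatorial indexing, which is the content of Proposition \ref{prop:fixedpoints}. Second, at each fixed point $p$ one computes the restrictions $H_0|_p$ and $H_\infty|_p$ (linear forms in the equivariant parameters, read off from the weights of the linearized line bundles at $p$) together with the collection of tangent weights entering $e^{T}(T_p\GM)$; this is Proposition \ref{prop:orbits}, and assembling the resulting fractions yields Corollary \ref{cor:sum}.

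The main obstacle is this last step, specifically the determination of the tangent weights at each fixed point. Unlike a toric variety, $\GM$ carries a $T$-action with positive-dimensional stabilizers in general, so one must genuinely understand the local $T$-representation on $T_p\GM$. This requires controlling how the tangent space decomposes along the successive exceptional loci of the blow-up construction (equivalently, along the boundary strata of the wonderful compactification) and organizing the resulting weights so that the denominators $e^{T}(T_p\GM)$ are computable in closed form. Once this local analysis is in place, finiteness of $\GM^{T}$ guarantees that the sum is finite and the localization formula delivers the stated explicit rational expression for the $ML$-degree.
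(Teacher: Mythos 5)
Your proposal is correct and follows essentially the same route as the paper: identify the ML-degree with the intersection number $\int_{\GM}\pi_0^{*}(H_1)^{a}\pi_\infty^{*}(H_2)^{b}$ on the resolution $\GM$, then evaluate it by localization with respect to the Cartan torus, using the finiteness and combinatorial enumeration of fixed points (Proposition \ref{prop:fixedpoints}) and the tangent weights at those points (Proposition \ref{prop:orbits}) as the two explicit inputs. The only difference is one of formalism: you invoke Atiyah--Bott localization in equivariant cohomology directly, whereas the paper first works in equivariant $K$-theory (Theorems \ref{thm:loc1} and \ref{thm:loc}, giving Corollary \ref{cor:sum}) and then passes to a generic one-parameter subtorus and the limit $t\to 1$ --- which produces exactly your cohomological expression, a ratio of products of weights contributing one rational number per fixed point.
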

Examples showing how the torus fixed points look like and how to obtain the rational numbers are presented in \ref{GMn=2}, \ref{GMn=3}, \ref{ex:n=3} and \ref{GMn=3.2}.

The study of the cohomology class of the graph of a rational map is also an active area of research in different branches of mathematics \cite{MSUZ, cid2020mixed}, \cite[7.1.3]{dolgachev2012classical} with results going back to Cremona.

We would like to point out that our approach is quite similar to modern developments in the theory of matroids \cite{JunePhD, JuneKarim, JuneRev}. Explicitly, a projective space $\PP^{n-1}$ may be identified with the projectivisation of the space of $n\times n$ diagonal matrices. The inversion map is the well-known classical Cremona transformation and a linear subspace of $\PP^{n-1}$ may be identified with a representable matroid. In algebraic statistics this corresponds to toric models. In that case, a particular role is played by the permutohedral variety, which is a smooth resolution of the graph of the Cremona map. In our setting we replace $\PP^{n-1}$ by the projective space of symmetric matrices. Then the analog of the permutohedral variety is played by $\GM$. In that analogy, uniform matroids and their beta invariants correspond to general spaces of symmetric matrices and the ML-degree. We hope that further study of the cohomology of $\GM$ will reveal more similarities.

The plan of the article is as follows. We study $\CC^*$-actions on smooth, convex, projective varieties \cite[0.4]{FP}, with a focus on the Lagrangian Grassmannian $\LG$. We realize the moduli spaces we study as subvarieties of Kontsevich spaces. Using \cite[Theorem 2]{FP} we obtain smooth, projective varieties. In types A,C and D this parallels the constructions of Thaddeus \cite{thaddeus1999complete} of varieties of \emph{complete collineations}, \emph{complete quadrics} and \emph{complete skew forms}.
More precisely, $\GM$ is a compactification of a space of orbits of an action of $\CC^*$ on $\LG$ which gives rise to the Cremona transformation, \ref{action=>inversion_of_matrices}. The description of the $K$-theory we provide through so-called moment or GKM graphs is very explicit and down-to-earth. Finally, we obtain the formula for the ML-degree just in terms of basic arithmetic operations on rational numbers. This allows us to predict many formulas for the ML-degree, cf.~Conjecture \ref{conj:form}. We implement our algorithms in Sage and Macaulay2.

We hope that our article will be of interest both to algebraic geometers and algebraic statisticians. The variety of complete quadrics has played an important role in many enumerative problems in pure algebraic geometry, cf.~\cite{laksov1987completed} and reference therein. However, to our knowledge it seems not to have been applied in algebraic statistics so far.
In this article we focus on introducing these methods and the computational aspects related to $ML$-degree. We plan to study in detail the cohomological properties of $\GM$ in forthcoming articles.

We work over complex numbers $\CC$. By $\PP$ we denote the projectivisation of a vector space or of a vector bundle, i.e.~$\PP(V)=(V\setminus\{0\})/\CC^*$. In particular $\PP(N_{Y/X})$
is the exceptional divisor of the blow up of a smooth variety $X$ along a smooth subvariety $Y\subset X$, where $N_{Y/X}$ stands for the normal bundle of $Y$ in $X$. For any set $X$ we denote by $\overline{X}$ its Zariski closure.
\section*{Acknowledgements}
We would like to thank Andrzej Weber for important remarks and discussions on equivariant cohomology. We thank the referees for their insightful remarks which significantly improved the
 exposition in the paper.

\section{Maximum likelihood estimate for Gaussian models}
In this section we recall basic notions from algebraic statistics, focusing on Gaussian models. Our basic references are \cite{StUh} and \cite{SethBook}.

A multivariate Gaussian distribution, also called a normal distribution, on $V_{\RR}:=\RR^n$ is determined by the mean vector $\mu\in\RR^n$  and a positive semidefinite $n\times n$ matrix $\Sigma$, known as the \emph{covariance matrix}. In the most familiar $n=1$ case $\Sigma$ is just a positive number, which is the variance of the distribution. Equivalently, we may consider $K:=\Sigma^{-1}$, which is the \emph{concentration matrix} and is also positive definite.

Before we proceed, we would like to mention one of general aims in algebraic statistics. Given some data, which is derived from an experiment, we would like to specify the parameters of the model, in our case the vector $\mu$ and the matrix $\Sigma$ and/or $K$. For this to make sense, we need to assume that the data we gather indeed follows the Gaussian model. In our setting we will be more restrictive and assume that $K$ belongs to a fixed linear subspace $\LLL$. Such models are called \emph{linear concentration models} and were introduced by Anderson in 1970 \cite{MR0277057}. In other words:
$$\LLL=\Big\{\sum_{i=1}^a \lambda_i K_i\in S^2\RR^n\Big\}$$
for some real symmetric matrices $K_i$ and we assume that $\LLL$ intersects the cone $\PD_n$ of positive definite matrices. We obtain the \emph{cone of concentration matrices} $K_\LLL:=\LLL\cap \PD_n$.

The covariance matrices allowed by the model form a semialgebraic set $K_\LLL^{-1}=\{M^{-1}: M\in K_\LLL\}$. Both $K_\LLL$ and $K_{\LLL}^{-1}$ belong to spaces of $n\times n$ symmetric matrices. From the algebraic perspective it is more natural to assume that $K_{\LLL}^{-1}$ is in the dual space of the ambient space of $K_\LLL$. Indeed, an element of $S^2V_{\RR}$ has a natural interpretation as a linear map $V_{\RR}^*\rightarrow V_{\RR}$, hence its inverse is in the space $S^2 V_{\RR}^*\supset K_{\LLL}^{-1}$. In coordinates, the natural pairing between the two spaces is given by the trace of the product. In particular, in the ambient space of $K_{\LLL}^{-1}$ it is natural to consider $\LLL^\perp$---the vector space of linear forms that vanish on $\LLL$. The projection $\pi:S^2V_{\RR}^*\rightarrow S^2V_{\RR}^*/\LLL^\perp$ of $K_{\LLL}^{-1}$ from $\LLL^\perp$ to the space $S^2V_{\RR}^*/\LLL^\perp\simeq \LLL^*$ is a convex (most often nonpolyhedral) cone $C_\LLL$, known as \emph{the cone of sufficient statistics}. The following is a fundamental theorem in algebraic statistics. We state a version that is sufficient in our setting, providing references to more general statements.
\begin{thm}[\cite{MSUZ}, Theorem 4.4, \cite{brown1986fundamentals}, Theorem 3.6]
The cones $K_\LLL$ and $C_\LLL$ are dual to each other. The inversion of matrices followed by projection $\pi$ with center $\LLL^\perp$ is a homeomorphism of the two open cones, which factors through $K_{\LLL}^{-1}$.
\end{thm}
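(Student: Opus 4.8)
The plan is to identify the composite ``inversion followed by the projection $\pi$'' with the gradient map of a strictly concave function, and then to apply the Legendre--Fenchel theory for the logarithmic barrier $\log\det$ of the cone $\PD_n$.

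The first step is a differential computation. For $K\in\PD_n$ and $H\in S^2V_\RR$ one has $d(\log\det)_K(H)=\tr(K^{-1}H)$, while the second differential equals $-\tr(K^{-1}HK^{-1}H)=-\|K^{-1/2}HK^{-1/2}\|^2<0$ for $H\neq0$. Restricting the smooth function $g:=\log\det$ to the open cone $K_\LLL=\LLL\cap\PD_n$ and identifying a linear form on $\LLL$ with its class in $S^2V_\RR^*/\LLL^\perp\simeq\LLL^*$, one sees that $dg_K$ is represented exactly by $\pi(K^{-1})$, since $\langle\pi(K^{-1}),H\rangle=\tr(K^{-1}H)$ for every $H\in\LLL$. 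Hence on $K_\LLL$ the map of the statement, $\psi:=\pi\circ(\,\cdot\,)^{-1}$, is precisely the gradient $\grad g$. Strict concavity of $g$ forces $\grad g$ to be injective, and its nondegenerate Hessian makes it a local diffeomorphism; therefore $\psi$ is a diffeomorphism of $K_\LLL$ onto its open image $\Omega:=\psi(K_\LLL)\subseteq\LLL^*$.

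The core step is to show that $\Omega$ equals the interior of the dual cone $K_\LLL^\vee=\{\phi\in\LLL^*:\langle\phi,M\rangle\ge0\ \text{for all}\ M\in\ol{K_\LLL}\}$, where $\ol{K_\LLL}=\LLL\cap\PSD_n$. One inclusion is immediate: if $K\in K_\LLL$ then $K^{-1}\succ0$, so $\langle\psi(K),M\rangle=\tr(K^{-1}M)>0$ for every nonzero $M\in\LLL\cap\PSD_n$, which places $\psi(K)$ in the interior of $K_\LLL^\vee$. The reverse inclusion is the substantive part, and is where I expect the main obstacle. Given $\phi$ in the interior of $K_\LLL^\vee$, I would consider the strictly concave function $h(K):=\log\det K-\langle\phi,K\rangle$ on $K_\LLL$ and argue that it is \emph{coercive}: as $K$ tends to the boundary of $K_\LLL$ its determinant degenerates and $\log\det K\to-\infty$, whereas as $\|K\|\to\infty$ inside the cone the estimate $\langle\phi,K\rangle\ge c\,\|K\|$—valid for interior $\phi$ by strict positivity on the compact cross-section $\{M\in\ol{K_\LLL}:\|M\|=1\}$—dominates the merely logarithmic growth of $\log\det K$. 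Thus $h$ attains a maximum at an interior critical point $K_0$, unique by strict concavity, and the stationarity condition $\grad g(K_0)=\phi$ reads $\psi(K_0)=\phi$, so $\phi\in\Omega$. Making the boundary degeneration and the growth estimate fully rigorous—in particular controlling $\log\det$ uniformly near all faces of the cone—is the only genuinely delicate point.

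It remains to assemble the conclusions. Since $C_\LLL=\pi(K_\LLL^{-1})=\psi(K_\LLL)=\Omega$ equals the interior of $K_\LLL^\vee$, its closure is $K_\LLL^\vee$, and biduality of closed convex cones gives $C_\LLL^\vee=\ol{K_\LLL}$; this is the asserted duality between $K_\LLL$ and $C_\LLL$. Finally, inversion $(\,\cdot\,)^{-1}\colon K_\LLL\to K_\LLL^{-1}$ is a homeomorphism (it is continuous with continuous inverse, again inversion), and composing it with $\pi$ gives exactly the map of the statement; hence that map factors through $K_\LLL^{-1}$ and, by the previous steps, is a diffeomorphism—in particular a homeomorphism—of the open cone $K_\LLL$ onto the open cone $C_\LLL$. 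This completes the proposed argument.
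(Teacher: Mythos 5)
Your proof is correct. Note that the paper itself gives no proof of this statement---it is quoted with references to MSUZ (Thm.~4.4) and Brown (Thm.~3.6)---so the comparison is with those sources rather than with an in-paper argument. Your route (identify $\pi\circ(\cdot)^{-1}$ on $K_\LLL$ with the gradient of the strictly concave barrier $\log\det$, get injectivity and the local-diffeomorphism property from strict concavity and the negative-definite Hessian, then capture the image as $\mathrm{int}(K_\LLL^\vee)$ by maximizing $h(K)=\log\det K-\langle\phi,K\rangle$) is precisely the standard exponential-family/Legendre-duality argument underlying both cited results: in Brown's formulation $h$ is the Gaussian log-likelihood in the concentration parametrization, and your coercivity step is exactly the existence-and-uniqueness argument for the MLE, while MSUZ run the same scheme with $\det$ replaced by a general hyperbolic polynomial. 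The one point you flag as ``genuinely delicate'' is in fact routine: $\log\det$ extends continuously as a map into $[-\infty,\infty)$ on the closed cone $\ol{K_\LLL}=\LLL\cap \PSD_n$, so the superlevel sets of $h$ are closed, and your comparison of the linear lower bound $\langle\phi,K\rangle\ge c\|K\|$ (valid since $\phi$ is strictly positive on the compact cross-section of the closed pointed cone $\ol{K_\LLL}$) against the logarithmic upper bound $\log\det K\le n\log(C\|K\|)$ makes them bounded; hence the maximum is attained at an interior point and stationarity gives $\psi(K_0)=\phi$. A small byproduct worth stating explicitly: convexity of $C_\LLL=\pi(K_\LLL^{-1})$, which the paper's definition takes for granted, also follows from your identification $C_\LLL=\mathrm{int}(K_\LLL^\vee)$.
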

We next explain the importance of this theorem in statistics.

Observations give us a sequence of vectors $x_1,\dots,x_s\in\RR^n$. Classically, to estimate the mean $\mu_0\in\RR^n$ we simply take the mean of the vectors $x_i$.
Each $x_i$ gives us a sample covariance matrix $(x_i-\mu_0)(x_i-\mu_0)^T$, which is positive semidefinite. By taking their mean we obtain the \emph{sample covariance matrix} $S$, which is positive semidefinite. If $S\in K_\LLL^{-1}$ then the most
natural guess for $\Sigma$ is $S$---and this is justified by
Theorem \ref{thm:MLE} below.
However, the sample covariance matrix typically does
not lie exactly in $K_\LLL^{-1}$. 
Our aim is to find $\Sigma$ that 'best explains' our observations. To make this more precise, one maximizes the likelihood of observing data. In our case, we want to find $\Sigma$ for which the product of values of the associated distribution at the observed data points is highest possible. Such $\Sigma$ is called the \emph{maximum likelihood estimate} and abbreviated MLE. The following beautiful result tells us how to find MLE from $\pi(S)$.
\begin{thm}[\cite{brown1986fundamentals}, Theorem 5.5]\label{thm:MLE}
For linear concentration model and $\pi(S)$ in the interior of $C_\LLL$ the MLE is the unique point $\Sigma_0\in K_\LLL^{-1}$ such that $\pi(\Sigma_0)=\pi(S)$.
\end{thm}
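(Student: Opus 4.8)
The plan is to reduce the maximum likelihood problem to a strictly concave optimization over the open cone $K_\LLL$, to show that its first-order optimality condition is exactly the equation $\pi(\Sigma_0)=\pi(S)$, and then to read off existence and uniqueness from the homeomorphism in the preceding duality theorem. Concretely, after estimating $\mu$ by the sample mean $\mu_0$ and centering, the Gaussian log-likelihood of the data $x_1,\dots,x_s$, written in terms of the concentration matrix $K=\Sigma^{-1}$, is
$$\ell(K) = \frac{s}{2}\big(\log\det K - \tr(KS)\big) + \text{const},$$
using $\sum_i (x_i-\mu_0)^T K (x_i-\mu_0) = s\,\tr(KS)$. Maximizing the likelihood over the model is therefore the same as maximizing $F(K):=\log\det K - \tr(KS)$ over $K\in K_\LLL=\LLL\cap \PD_n$. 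Since $\log\det$ is strictly concave on $\PD_n$ and $\tr(KS)$ is linear, $F$ is strictly concave; hence it has at most one critical point on the linear slice $\LLL$, and any such critical point is automatically its unique global maximizer.

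Next I would compute the stationarity condition. As $\LLL$ is a linear subspace, the tangent directions to $K_\LLL$ at an interior point are exactly the $H\in\LLL$, so using $\nabla_K\log\det K = K^{-1}$ and $\nabla_K\tr(KS)=S$ (both regarded in $S^2V_\RR^*$ via the trace pairing) the first-order condition $\langle \nabla F(K),H\rangle = 0$ for all $H\in\LLL$ reads
$$\tr\big((K^{-1}-S)H\big)=0 \quad\text{for all } H\in\LLL,$$
that is, $K^{-1}-S\in\LLL^\perp$. Setting $\Sigma_0=K^{-1}\in K_\LLL^{-1}$, this is precisely the equation $\pi(\Sigma_0)=\pi(S)$.

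Existence and uniqueness of such a point then follow directly from the preceding duality theorem, which asserts that $K\mapsto\pi(K^{-1})$ is a homeomorphism of the open cone $K_\LLL$ onto the interior of $C_\LLL$. Since $\pi(S)$ is assumed to lie in the interior of $C_\LLL$, there is a unique $K_0\in K_\LLL$ with $\pi(K_0^{-1})=\pi(S)$, equivalently a unique $\Sigma_0\in K_\LLL^{-1}$ with $\pi(\Sigma_0)=\pi(S)$. By the previous paragraph $K_0$ is a critical point of $F$ on $\LLL$, and strict concavity promotes it to the global maximizer; thus $\Sigma_0=K_0^{-1}$ is the MLE, as claimed.

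The main obstacle I anticipate is the existence statement: ensuring that the supremum of $F$ over $K_\LLL$ is attained in the interior rather than escaping to the boundary of $\PD_n$ (where $\det K\to 0$ and $F\to -\infty$) or to infinity along the cone. This is exactly what the hypothesis $\pi(S)\in\mathrm{int}\,C_\LLL$ secures through the surjectivity half of the homeomorphism; were one to argue without invoking the duality theorem, one would instead need a direct coercivity argument for $F$, controlling its decay along both the boundary and the unbounded directions of $K_\LLL$.
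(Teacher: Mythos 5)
The paper does not actually prove this statement: it is imported verbatim from Brown's book (Theorem 5.5 of the cited reference), so there is no internal proof to compare against. Your argument is correct, and it is essentially the standard exponential-family derivation that underlies that citation. The three ingredients are all sound: the reduction of the likelihood to the strictly concave function $F(K)=\log\det K-\tr(KS)$ on the relatively open convex cone $K_\LLL$; the stationarity condition $K^{-1}-S\in\LLL^\perp$, which is precisely $\pi(\Sigma_0)=\pi(S)$ under the trace pairing; and the appeal to the duality theorem stated just before (the homeomorphism $K\mapsto\pi(K^{-1})$ from $K_\LLL$ onto the interior of $C_\LLL$) for existence and uniqueness. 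There is no circularity in that last step: the duality statement is Brown's Theorem 3.6 (the mean-value parametrization of an exponential family), which is logically prior to the MLE characterization in Theorem 5.5, and indeed this is exactly how the result is proved in that literature. One point you could make fully explicit: once the duality theorem hands you the critical point $K_0\in K_\LLL$, concavity together with $\nabla F(K_0)\perp\LLL$ gives $F(K)\le F(K_0)+\langle\nabla F(K_0),K-K_0\rangle=F(K_0)$ for every $K\in K_\LLL$, so $K_0$ is the global maximizer and no separate coercivity or boundary-escape argument is needed; the concern you raise in your final paragraph is entirely absorbed by the surjectivity half of the homeomorphism, exactly as you suspected.
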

While $\pi$ provides the bijection between $K_\LLL^{-1}$ and $C_\LLL$, the algebraic structure of this map is quite interesting. Let $X_\LLL$ be the variety that is the Zariski closure of $K_\LLL^{-1}$. General points in $X_\LLL$ are simply inverses of matrices in $\LLL$. Now $\pi$ may be regarded as a dominant map $X_\LLL\rightarrow S^2V_{\RR}^*/\LLL^\perp$. As all varieties we study are cones it is more natural to consider $X_\LLL$ as a projective variety in $\PP(S^2V_{\RR}^*)$ and $\pi$ as a rational map. It is regular on $X_\LLL$ if and only if $\PP(\LLL^\perp)\cap X_\LLL=\emptyset$. Further, from the point of view of algebraic geometry, it is natural to pass to the algebraically closed field $\CC$. This is not necessary, however it greatly simplifies the language, e.g.~when we talk about the fibers of the map. Hence, from now on $V$ will be the complex vector space obtained as complexification of $V_{\RR}$. Slightly abusing notation, we may also consider $X_\LLL$ as a complex, projective algebraic variety.
We next define the basic algebraic measure of how hard it is to compute the inverse of $\pi$.
\begin{defi}[ML-degree]
The \emph{maximum likelihood degree} of the model $\LLL$ is the degree of the dominant rational map $\pi:X_\LLL\dashrightarrow\PP(S^2V^*/\LLL^\perp)$.
\end{defi}
In other words, the maximum likelihood degree counts the number of possible complex symmetric matrices in $(\pi_{|X_\LLL})^{-1}(\pi(S))$ (which are extremal for the likelihood function) given a random sample covariance matrix $S$. The MLE is one of those preimages.
\begin{rem}
One may replace the space of symmetric matrices $S^2V$ by the space of diagonal matrices, which may be identified with $V$. Choosing a subspace $\LLL$ of this space is equivalent to choosing the model. Typically one assumes that such a subspace is defined over the integers. One considers the map defined by the gradient of the log-partition function, cf.~\cite[Example 2.4 and Example 4.3]{MSUZ}. In this example, the class of varieties $X_\LLL$ coincides with the class of (projective, not necessarily normal) toric varieties, cf.~\cite{huh2014likelihood} and references therein.  
These models are called log-linear or toric.

The MLE in this case is very interesting and has been studied a lot, with relations to moment maps \cite{clarke2018moment}, geometric modeling and linear precision \cite{Frank}, Horn parametrization \cite{JuneML}. Connections to matroid theory are particularly interesting. The cohomology ring of the resolution of the graph of the inversion map, given by the permutoherdal variety, was the fundamental object to study basic invariants of matroids \cite{JuneRev}. The ML-degree and its analogs turned out to be coefficients of associated chromatic polynomials, which lead to the proof of Heron-Rota-Welsh Conjecture \cite{JunePhD, JuneKarim}.

One of our aims and motivations was to generalize the permutohedral approach to the setting of linear concentration models. This leads us to the variety of complete quadrics, which is the analog of the permutohedral variety. Indeed, we provide interpretations of both spaces as moduli spaces. The inclusion of diagonal matrices in the space of symmetric matrices leads to the inclusion of the permutohedral variety in the variety of complete quadrics, cf.~Remark \ref{rem:inc}. The computation of ML-degree is then reduced to cohomology computation on respective varieties.
\end{rem}
In this article we will be interested in the case when $\LLL$ is a generic subspace of $S^2V$. The nongeneric cases are also very interesting leading to e.g.~graphical Gaussian models. In our case the ML-degree depends only on two numbers $a:=\dim\LLL$ and $n$. We denote it by $\phi(n,a)$.
\begin{rem}
On the projective level the map given by inversion of the matrices is the gradient of the determinant. If we restrict to diagonal maps it is the gradient of the elementary symmetric polynomial $E_{n,n}:=x_1\cdots x_n$. Both of those polynomials are hyperbolic. In general, one may study gradients of hyperbolic polynomials. This approach was taken in \cite{MSUZ}. In particular, explicit formulas for the ML-degree in case of elementary symmetric polynomials were given.

In case of $E_{n,n}$ this is simply the $\beta$ invariant of the uniform matroid of rank $a$, which is ${n-2} \choose {a-1}$. Hence, the table of all ML-degrees is in this case simply the Pascal triangle. From this perspective, $\phi(n,a)$ form a symmetric noncommutative version of the Pascal triangle.
\end{rem}
The following result, which was conjectured in \cite[Conjecture 5.5]{MSUZ}, is a straightforward corollary of a result of Teissier.
\begin{cor}\label{cor:ML}
Let $\LLL$ be a general vector subspace in a vector space $V$. Let $f$ be a homogeneous polynomial on $V$. The closure of the image of $\PP(\LLL)$ under the (rational) gradient map of $f$ in $\PP(V^*)$ is disjoint from $\PP(\LLL^\perp)$. In particular, $\phi(n,a)$ equals:
\begin{enumerate}
\item the degree of the rational map $\pi:X_\LLL \dashrightarrow \PP(\LLL^*)$;
\item the degree of the variety $X_\LLL$;
\item the degree of the rational map $\pi\circ (\cdot)^{-1}:\PP(\LLL)\dashrightarrow\PP(\LLL^*)$,
\item The coefficient of $x^{a-1}y^{{{n+1}\choose 2}-a}$ in the polynomial representing the graph of the inversion map of symmetric matrices as a cohomology class in $\PP(S^2V)\times\PP(S^2V^*)$. Here $x$ (resp.~$y$) is the class of the pull-back of a hyperplane in $\PP(S^2V^*)$ (resp.~$\PP(S^2V)$).
\end{enumerate}
Above, we identify the projection $\pi$ with its restriction to $X_\LLL$.
\end{cor}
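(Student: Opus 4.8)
\emph{Setup and reduction.} Write $g=\grad f\colon \PP(V)\dashrightarrow\PP(V^*)$ for the gradient (polar) map and let $\Gamma\subseteq\PP(V)\times\PP(V^*)$ be the closure of its graph; in the situation of interest $V=S^2V$, $f=\det$, the map $g$ is matrix inversion, and $X_\LLL=\overline{g(\PP(\LLL))}$. The entire statement rests on the single geometric fact in the first sentence, the disjointness $X_\LLL\cap\PP(\LLL^\perp)=\emptyset$: once this is in hand, items (1)--(4) are formal. So the plan is to prove the disjointness first. The paper attributes it to a result of Teissier, and indeed it is a general-position statement about polar images for which Teissier's theory of polar varieties is the conceptual source; I would either cite that directly or verify it by the self-contained incidence count below.

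\emph{The disjointness.} Set $m=\dim V$ and consider the universal incidence
\[
I=\{(\LLL,[v],[\xi])\in \Grass(a,V)\times\Gamma \ :\ v\in\LLL,\ \xi\in\LLL^\perp\}.
\]
Since every point of $X_\LLL$ is the image under the second projection of a graph point whose first coordinate lies in $\PP(\LLL)$, showing that the projection $I\to\Grass(a,V)$ is not dominant gives exactly $X_\LLL\cap\PP(\LLL^\perp)=\emptyset$ for general $\LLL$ (the image of $I$ is closed of dimension $<\dim\Grass(a,V)$, so a general $\LLL$ has empty fibre). The count is as follows. If $([v],[\xi])\in\Gamma$ admits any $\LLL$ with $v\in\LLL\subseteq\ker\xi$, then $\xi(v)=0$; on the open part of $\Gamma$ one has $\xi=[\grad f(v)]$, so by Euler's relation $\xi(v)$ is proportional to $f(v)$ and hence $f(v)=0$, while on the base locus $\grad f(v)=0$ forces $f(v)=0$ as well. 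Thus the image of $I$ in $\Gamma$ lies in $\Gamma\cap(\{f=0\}\times\PP(V^*))$, of dimension at most $\dim\Gamma-1=m-2$, and over such a point the fibre is the Grassmannian of $a$-planes with $\langle v\rangle\subseteq\LLL\subseteq\ker\xi$, of dimension $(a-1)(m-a-1)$. Hence $\dim I\le (m-2)+(a-1)(m-a-1)=a(m-a)-1<\dim\Grass(a,V)$. The main obstacle is precisely making this airtight for an \emph{arbitrary} $f$: one must bound the contribution of the base locus of $\grad f$ and rule out excess-dimensional components of $I$, and it is here that Teissier's polar-variety results supply the clean general guarantee behind the naive count.

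\emph{From disjointness to (1)--(4).} Item (1) is the definition of the ML-degree, after identifying $\PP(S^2V^*/\LLL^\perp)$ with $\PP(\LLL^*)$; the disjointness upgrades the rational map $\pi$ to a morphism on $X_\LLL$. For (2), $\pi=\pi_{|X_\LLL}$ is linear projection of $X_\LLL\subset\PP(S^2V^*)$ away from the centre $\PP(\LLL^\perp)$; as this centre is disjoint from $X_\LLL$ and $\dim X_\LLL=a-1=\dim\PP(\LLL^*)$, the map is finite, and a finite linear projection of a variety of dimension $a-1$ onto $\PP^{a-1}$ has degree $\deg X_\LLL$. For (3), inversion $\PP(\LLL)\dashrightarrow X_\LLL$, $M\mapsto M^{-1}$, is birational onto its image as the restriction of an involution, so composing with $\pi$ leaves the degree unchanged, giving (3)$=$(1). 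Finally (4) is polar-degree bookkeeping: writing $[\Gamma]=\sum_i\delta_i\,x^iy^{N-i}$ with $N=\binom{n+1}{2}-1$, cutting the source by $\PP(\LLL)$ (class $y^{N-a+1}$) and the target by a general $\PP^{N-a+1}$ (class $x^{a-1}$) gives, by the projection formula, $\deg X_\LLL=[\Gamma]\cdot x^{a-1}y^{N-a+1}=\delta_{N-a+1}$; the self-duality of the graph of the involution $g=g^{-1}$ yields $\delta_i=\delta_{N-i}$, so this equals the coefficient of $x^{a-1}y^{\binom{n+1}{2}-a}$ named in the statement.
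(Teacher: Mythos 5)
Your proposal is correct, and for items (1)--(4) it runs essentially parallel to the paper's proof: the paper likewise deduces (1) and (2) by noting that a fiber of $\pi$ is a general linear space containing $\PP(\LLL^\perp)$ in codimension one, hence meets $X_\LLL$ in exactly $\deg X_\LLL$ points; it gets (3) from inversion having degree one; and it treats (4) as a restatement of (1). The genuine divergence is the opening disjointness assertion $X_\LLL\cap\PP(\LLL^\perp)=\emptyset$: the paper proves nothing here and simply cites Teissier's theorem (\cite{T1}, \cite[Chapter 5]{T2}), whereas you give a self-contained incidence count --- and, contrary to your own closing worry, your count is already airtight. The two issues you flag are handled by what you wrote: points of $\Gamma$ lying over the indeterminacy locus of $\grad f$ satisfy $f(v)=0$ automatically by Euler's relation (all partials vanish there), so the image of $I$ in $\Gamma$ really is contained in $\Gamma\cap(\{f=0\}\times\PP(V^*))$; this is a proper closed subset of the irreducible $(m-1)$-dimensional $\Gamma$ (the first projection maps $\Gamma$ onto $\PP(V)\not\subseteq\{f=0\}$), hence has dimension at most $m-2$, and every nonempty fiber of $I\to\Gamma$ is a Grassmannian $\Grass\bigl(a-1,\ker\xi/\langle v\rangle\bigr)$ of the fixed dimension $(a-1)(m-a-1)$, so no excess-dimensional component of $I$ can exist and $\dim I\le a(m-a)-1<\dim\Grass(a,V)$ holds unconditionally. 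What each approach buys: the citation of Teissier is one line and places the statement inside the theory of polar varieties; your argument is elementary, works verbatim for any nonzero homogeneous $f$ in characteristic zero, and makes the corollary self-contained. One further point in your favor: in (4) you make explicit the symmetry $\delta_i=\delta_{N-i}$ of the multidegrees of the graph (inversion is an involution), which is genuinely needed to identify the coefficient of $x^{a-1}y^{\binom{n+1}{2}-a}$ with the paper's labeling of $x$ and $y$ --- the intersection number one computes directly is $\delta_{N-a+1}$, not $\delta_{a-1}$ --- whereas the paper's ``point (4) is a restatement of point (1)'' silently elides this swap.
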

\begin{proof}
Teissier's theorem \cite{T1}, \cite[Chapter 5]{T2} says that $\overline{(\grad f)(\PP(\LLL))}\cap\PP(\LLL^\perp)=\emptyset$ for $\LLL$ general. Then the inverse image of a point by $\pi$ is a general projective subspace that contains $\PP(\LLL^\perp)$ as a codimension one subspace. Such a subspace, over complex numbers, must intersect $X_\LLL$ in precisely the number of points equal to the degree of the variety and is the degree of $\pi$. As inversion of matrices is of degree one, point $(3)$ follows. Point $(4)$ is a restatement of point $(1)$.
\end{proof}
Formulas for $\phi(n,a)$ are quite elusive. In the theorem below point $(1)$ trivially follows from B\'ezout theorem. Point $(2)$ is an easy consequence of the classical fact that the degree of the variety of symmetric matrices of rank at most $n-2$ is ${n+1}\choose 3$. Point $(3)$ is the state of the art theorem by St\"uckrad \cite{MR1161918} and Chardin, Eisenbud and Ulrich \cite{MR3406441}.
\begin{thm}\label{thm:known} In the situation described above the following holds:
\begin{enumerate}
\item $\phi(n,a)=(n-1)^{a-1}$ for $a=1,2,3$,
\item $\phi(n,4)=\frac{1}{6}(5n-3)(n-1)(n-2),$
\item $\phi(n,5)=\frac{1}{12}(7n^2-19n+6)(n-1)(n-2)$,
\item $\phi(n,a)$ for $n\leq 6$ and any $a$ are given in \cite[Theorem 1]{StUh}.
\end{enumerate}
\end{thm}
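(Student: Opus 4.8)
The unifying tool is Corollary \ref{cor:ML}: for a general $\LLL$ of dimension $a$, the number $\phi(n,a)$ is the degree of the rational map $\psi:=\pi\circ(\cdot)^{-1}\colon\PP(\LLL)\dashrightarrow\PP(\LLL^*)$ between two copies of $\PP^{a-1}$. This $\psi$ is the restriction to $\PP(\LLL)$ of the inversion map $\grad\det\colon\PP(S^2V)\dashrightarrow\PP(S^2V^*)$, whose components are the $(n-1)\times(n-1)$ minors (forms of degree $n-1$), followed by the linear projection $\pi$ away from $\PP(\LLL^\perp)$. Its base locus on $\PP(\LLL)$ is $B:=\PP(\LLL)\cap\{\rank\le n-2\}$, since the adjugate vanishes exactly on matrices of corank $\ge 2$, while Corollary \ref{cor:ML} (via Teissier) guarantees that the image avoids $\PP(\LLL^\perp)$, so $\pi$ creates no further base points. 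Recall that $\{\rank\le n-2\}$ has codimension $3$ and degree $\binom{n+1}{3}$, and its singular locus $\{\rank\le n-3\}$ has codimension $6$. For point $(1)$ I would simply note that $\dim\PP(\LLL)=a-1\le 2<3=\codim\{\rank\le n-2\}$, so a general $\PP(\LLL)$ misses $B$ entirely; then $\psi$ is a morphism $\PP^{a-1}\to\PP^{a-1}$ defined by a base-point-free system of degree-$(n-1)$ forms, dominant by Corollary \ref{cor:ML}, and Bézout gives $\deg\psi=(n-1)^{a-1}$.

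For point $(2)$, with $a=4$, the space $\PP(\LLL)=\PP^3$ meets $\{\rank\le n-2\}$ in $\binom{n+1}{3}$ reduced points, each a smooth (corank exactly $2$) point where the intersection is transverse, and all lying off the corank-$3$ locus. Writing the preimage of a general point as an intersection $D_1\cap D_2\cap D_3$ of general members of the system, Bézout reads $(n-1)^3=\deg\psi+\sum_{p\in B}m_p$, where $m_p$ is the local intersection multiplicity at the base point $p$. The key local computation is to put the corank-$2$ matrix at $p$ in block form with invertible $(n-2)\times(n-2)$ corner and expand the adjugate: to leading order it equals the classical adjugate of the $2\times 2$ Schur complement $C'$, whose three entries, restricted to a general $\PP(\LLL)$, form a local coordinate system at $p$ (this is precisely the transversality of $B$). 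Hence three general members have linearly independent linear parts at $p$, forming a regular sequence, so $m_p=1$. This yields $\phi(n,4)=(n-1)^3-\binom{n+1}{3}$, which expands to the stated $\tfrac16(5n-3)(n-1)(n-2)$.

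For point $(3)$, with $a=5$, the space $\PP(\LLL)=\PP^4$ and $B$ becomes one-dimensional—a curve of corank-$2$ matrices—so the naive count $(n-1)^4$ must be corrected by the excess-intersection (Segre class) contribution of this curve. This is exactly the difficult content provided by the theorems of St\"uckrad \cite{MR1161918} and of Chardin--Eisenbud--Ulrich \cite{MR3406441}, which I would invoke to extract $\phi(n,5)$; point $(4)$ is then a direct citation of \cite[Theorem 1]{StUh}, where $\phi(n,a)$ is tabulated for $n\le 6$. I expect point $(3)$ to be the main obstacle: once the base locus acquires positive dimension, the clean ``each point counts once'' bookkeeping of $(2)$ breaks down, and controlling the Segre contribution of the corank-$2$ curve—together with checking that no hidden contribution arises from tangencies or embedded components—is the crux, which is precisely why this case rests on the cited state-of-the-art intersection-theoretic results rather than on an elementary count.
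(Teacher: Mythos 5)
Your proposal is correct and follows essentially the same route as the paper, which proves nothing beyond the attributions given just before the statement: Bézout for $(1)$, the classical degree $\binom{n+1}{3}$ of the corank-$2$ locus for $(2)$, and the cited theorems of St\"uckrad and Chardin--Eisenbud--Ulrich for $(3)$ and of Sturmfels--Uhler for $(4)$. Your elaboration of $(1)$ and $(2)$ --- in particular the use of Teissier's theorem to rule out extra base points from the projection, and the Schur-complement argument showing each of the $\binom{n+1}{3}$ base points is a transverse, multiplicity-one correction to $(n-1)^3$ --- is a sound filling-in of exactly the steps the paper calls ``trivial'' and ``easy''.
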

Sturmfels and Uhler \cite[p.611]{StUh} suggest the following conjecture.
\begin{conj}\label{conj:poly}
For fixed $a$ the function $\phi(n,a)$ is a polynomial in $n$ of degree $a-1$.
\end{conj}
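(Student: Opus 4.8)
The plan is to pull the whole computation back to the resolution $\GM$ and reduce the conjecture to the polynomial dependence on $n$ of finitely many intersection numbers supported on the exceptional locus. First I would record, combining Corollary~\ref{cor:gaussmoduli} with Corollary~\ref{cor:ML}(4), that
\[
\phi(n,a)=\int_{\GM} x^{\,a-1}\,y^{\,\binom{n+1}{2}-a},\qquad x=\pi_\infty^*\cO(1),\quad y=\pi_0^*\cO(1),
\]
a top intersection number on the $\bigl(\binom{n+1}{2}-1\bigr)$-dimensional variety $\GM$. Because the inversion map is the gradient of $\det$, it is given by the $(n-1)\times(n-1)$ cofactors, i.e.\ by a linear system of degree $n-1$ whose base locus is the variety $\Sigma$ of symmetric matrices of corank $\ge 2$ (of codimension $3$, and of degree $\binom{n+1}{3}$ as recalled above). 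Hence on the resolution one has the relation $x=(n-1)\,y-E$ in $\Pic(\GM)$, with $E\ge 0$ the exceptional divisor, contracted by $\pi_0$ onto $\Sigma$.

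Substituting this relation and expanding gives
\begin{equation}\label{eq:phiexp}
\phi(n,a)=\sum_{j=0}^{a-1}(-1)^j\binom{a-1}{j}(n-1)^{\,a-1-j}\,s_j(n),\qquad s_j(n):=\int_{\GM}E^{\,j}\,y^{\,\binom{n+1}{2}-1-j}.
\end{equation}
By the projection formula along $\pi_0$, the number $s_j(n)$ is the integral of the pushforward cycle $(\pi_0)_*\bigl(E^{\,j}\bigr)$ against a power of the hyperplane class on $\PP(S^2V)$; since $E$ is contracted onto $\Sigma$, which has codimension $3$, this cycle vanishes for $j<3$. Thus $s_0=1$ (as $\pi_0$ is birational) and $s_1=s_2=0$, while for $j\ge3$ the number $s_j(n)$ is a characteristic (Segre) number of the determinantal base scheme. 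For fixed $a$ only the finitely many terms with $j\le a-1$ survive, so the conjecture reduces to showing that each $s_j(n)$ is a polynomial in $n$ with $\deg_n s_j\le j$, together with non-vanishing of the leading coefficient of \eqref{eq:phiexp}.

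To prove the polynomiality of $s_j(n)$ I would use the recursive boundary structure of the variety of complete quadrics (De~Concini--Procesi): the exceptional divisor decomposes, $E=\sum_k c_k D_k$, over the boundary divisors $D_k$ of $\GM$ indexed by the corank stratum, and each $D_k$ fibers over an isotropic-flag variety with fibers that are products of complete quadrics of strictly smaller matrix size. Intersecting and applying this fibration repeatedly rewrites $s_j(n)$ as a finite sum of products of analogous characteristic numbers on smaller complete quadrics, with weights that are binomial in $n$ (built from the polynomial-in-$n$ degrees of the strata $\Sigma_{\le r}$). An induction on $n$, anchored by the classical small cases---for $n=3$ these are Chasles' conic numbers $1,2,4,4,2,1$---then yields polynomiality, and the codimension bookkeeping (the corank-$k$ stratum first contributes at order $j=\binom{k+1}{2}-1$ and enters with degree $\binom{k+1}{2}$ in $n$) gives the bound $\deg_n s_j\le j$, whence $\deg_n\phi(\cdot,a)\le a-1$. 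Equivalently, one may substitute the localization formula of Corollary~\ref{cor:sum}---the sum over the fixed points $\GM^{\TT}$ of Proposition~\ref{prop:fixedpoints}---into the same bookkeeping, grouping fixed points by combinatorial type so that each type contributes a term that is manifestly polynomial in $n$.

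The hard part will be making this control uniform in $n$: the base scheme is severely singular along the nested determinantal strata---precisely the singularity $\GM$ is built to resolve---so the recursion must track all coranks at once, and the inductive hypothesis has to be strong enough to propagate polynomiality through the products of smaller complete quadrics. The most delicate point, and the one I expect to be the genuine obstacle, is the matching lower bound $\deg_n\phi(\cdot,a)\ge a-1$: once polynomiality is known, the leading coefficient is the alternating combination in \eqref{eq:phiexp} of the top coefficients of the $s_j$, and one must rule out its cancellation. I would attack this by identifying that leading coefficient with the top term of a genuine volume polynomial of $\GM$---for instance through a Newton--Okounkov or large-$n$ asymptotic evaluation of $\int_{\GM}\bigl((n-1)y-E\bigr)^{a-1}y^{\binom{n+1}{2}-a}$---thereby exhibiting it as a positive rational and pinning the degree to exactly $a-1$.
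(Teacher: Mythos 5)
First, a matter of status: the statement you are proving is Conjecture~\ref{conj:poly} in the paper --- it is attributed to Sturmfels and Uhler, the paper offers no proof of it, and indeed uses it later only as a hypothesis from which Conjecture~\ref{conj:form} is extrapolated numerically. So there is no proof in the paper to compare yours against, and your proposal must stand on its own. As written, it does not: it is a research program whose two essential steps are left open, and you say so yourself. What you do establish is sound and is a reasonable start: the identity $\phi(n,a)=\int_{\GM}x^{a-1}y^{\binom{n+1}{2}-a}$ (via Corollary~\ref{cor:ML}(4) and Corollary~\ref{cor:gaussmoduli}), the relation $x=(n-1)y-E$ with $E$ effective and $\pi_0$-exceptional (consistent with Example~\ref{GMn=3}, where $H_2=2H_1-E_1$), the binomial expansion of $\phi(n,a)$ in terms of the numbers $s_j(n)=\int_{\GM}E^j y^{\binom{n+1}{2}-1-j}$, and the vanishing $s_1=s_2=0$ by pushing forward to $\PP(S^2V)$ against the codimension-$3$ locus $\Sigma$. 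One can even check your scheme against Theorem~\ref{thm:known}: it forces $s_3(n)=(n-1)^3-\phi(n,4)=\binom{n+1}{3}$ and leading coefficients $\sigma_3=1/6$, $\sigma_4=1/4$.

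The gaps are the conjecture itself. (a) Polynomiality of $s_j(n)$ with $\deg_n s_j\le j$: each $s_j(n)$ is a number computed on a \emph{different} variety $\GM_n$ for each $n$, and the interpolation across these varieties is precisely what must be constructed. Your appeal to the De~Concini--Procesi boundary recursion is only gestured at --- you never determine the multiplicities $c_k$ in $E=\sum_k c_k D_k$, never prove that monomials in boundary divisors and $y$ decompose along the fibrations of the $D_k$ over flag varieties with fibers products of smaller complete quadrics, and never formulate an inductive statement strong enough to make the bookkeeping uniform in $n$. The same objection applies to your alternative via Corollary~\ref{cor:sum}: for each fixed $n$ that corollary expresses $\phi(n,a)$ as a sum of order $n!$ local terms, and grouping fixed points ``by combinatorial type'' so that the sum becomes manifestly polynomial in $n$ is a genuine combinatorial problem, not a reformulation, since the number of fixed points and of types grows with $n$. (b) The lower bound $\deg_n\phi(\cdot,a)\ge a-1$: granting (a), the leading coefficient is the alternating sum $\sum_j(-1)^j\binom{a-1}{j}\sigma_j$ of the leading coefficients $\sigma_j$ of the $s_j$, and nothing you say precludes its vanishing; the terms are not of one sign (witness $1-4\cdot\tfrac16+\tfrac14=\tfrac{7}{12}$ for $a=5$), so positivity is a quantitative claim needing its own idea. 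The Newton--Okounkov/volume suggestion does not apply off the shelf, because the class being raised to the top power is $(n-1)y-E$, which is not nef, so no convexity or volume positivity statement is directly available. In short: correct reductions and a plausible strategy, but the two deferred steps \emph{are} the conjecture, which remains unproven both in your proposal and in the paper.
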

The computations of $\phi(n,a)$ are very challenging. By Corollary \ref{cor:ML} one could try to compute the degree of $K_\LLL^{-1}$. However, this relies on obtaining the equations of $K_\LLL^{-1}$, which is a hard implicitization problem. A more effective approach is to intersect random linear combinations of $(n-1)\times (n-1)$ minors. However, then one needs to saturate with respect to the base locus, made of matrices of rank at most $n-2$, which again is complicated.

In the next sections we present a formula for $\phi(n,a)$ given just in terms of basic arithmetic operations on rational numbers.

\section{${\mathbb C}^*$-actions and birational maps}

\subsection{Set-up and basic results}
Let us begin this section by recalling definitions and results regarding $\CC^*$-actions. Our set up is similar to that of \cite{RW} or  \cite{OSCRW}. For details we refer to \cite[Sect.~2]{RW} or \cite[Sect.~2]{OSCRW}.

Let $X$ be a smooth projective variety over $\CC$ with an effective $\CC^*$ algebraic action $\delta: \CC^*\times X\rightarrow X$; we will write $(t,x)\mapsto\delta(t)(x)$ or just $(t,x)\mapsto t(x)$ if the action is known from the context. By the {\em source} and the {\em sink} of the orbit of a point $x\in X$ we understand $\lim_{t\to 0}t(x)$ and $\lim_{t\to\infty}t(x)$, respectively. The names of source/sink will be also used in reference to the points on a rational curve which is the closure of the orbit.

By $Y_j$, $j\in\cJ$, we denote components of the fixed point locus of the action. The source of the action on $X$ is the component which is the source for an orbit of a general point, similarly with the sink.

We choose an ample line bundle $\Lb$ on $X$ and a linearization $\mu=\mu_\Lb: \CC^*\times \Lb\rightarrow\Lb$. We will write $\mu_\delta$ if we need to stress that $\mu$ lifts the action $\delta$ to $\Lb$ and $\Lb$ is known from the context. The linearization $\mu_\Lb$ assigns to each component $Y_i$ the weight on the fiber $\Lb_y$ over a point $y\in Y_i$. Thus we have a function $\mu_\Lb: \{Y_j: j\in \cJ\}\rightarrow \ZZ$. That is, by abuse of notation, $\mu_\Lb$ denotes both the linearization and the function whose value on each fixed point $y$ is the weight $\mu_\Lb(y)$ of $\mu_\Lb: \Lb_y\rightarrow\Lb_y$ and also, for a connected fixed point component $Y$, $\mu_\Lb(Y)$ denotes $\mu_\Lb(y)$ for $y\in Y$. The function $\mu_\Lb$ assumes maximum at the source and the minimum at the sink of the action. We note that if $\Lb$ is very ample then $\mu_\Lb$ is the restriction to the fixed point locus of the moment map associated to an equivariant emmbedding defined by the complete linear system of $\Lb$. For more see \cite[Sect.~2.1]{BWW} and references therein.

We will assume that the action is {\em equalized} which means that the weights of the action of the normal bundle of every fixed point component are either $+1$ or $-1$. The following lemma summarizes \cite[Lemma 2.2]{RW} and \cite[Cor.~2.15]{OSCRW}.

\begin{lemma}\label{equalized_action}
Let $\CC^*\times X\rightarrow X$ be an equalized action on $(X,\Lb)$ as above.
\begin{enumerate}[leftmargin=*]
\item If $C$ is the closure of a nontrivial orbit with source at $y_0$ and sink at $y_\infty$ then $C$ is smooth and $\Lb\cdot C =\mu_\Lb(y_0)-\mu_\Lb(y_\infty)$.
\item  If $C_1,\dots,C_m$ are closures of orbits such that the sink of $C_{i-1}$ and the source of $C_i$ are contained in the same fixed point component and the source of $C_1$ is in the source of $X$ and the sink of $C_m$ is in the sink of $X$ then the numerical equivalence class of the cycle $\sum_i[C_i]$ is equal to the class of a general orbit of the action.
\end{enumerate}
\end{lemma}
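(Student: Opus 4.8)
For part (1) I would first pass to the normalization $\nu\colon\PP^1\to C$ of the orbit closure. Since $C$ is the closure of a single $\CC^*$-orbit, the action lifts to $\PP^1$ with fixed points $0,\infty$ mapping to $y_0,y_\infty$, and $\nu$ restricts to an isomorphism onto the free orbit $C\setminus\{y_0,y_\infty\}$. It then remains to show $\nu$ is a closed immersion, i.e. injective and unramified at $0$ and $\infty$. Choosing a local coordinate $z$ at $0$ on which $\CC^*$ acts with weight $+1$, expand $\nu(z)=y_0+z^k v+\dotsb$ with $v\neq 0$; equivariance forces $v$ to lie in the weight-$k$ subspace of $T_{y_0}X$, and since $v$ is a normal direction at the fixed component through $y_0$, the equalized hypothesis makes its weight equal to $\pm1$. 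As $k\geq 1$, this gives $k=1$ and $d\nu_0\neq 0$. The same argument applies at $\infty$, and injectivity is clear because a nontrivial orbit has $y_0\neq y_\infty$ and its interior is a $\CC^*$-torsor. In characteristic zero a proper, injective, unramified morphism from a smooth curve is a closed immersion, so $C\cong\PP^1$ is smooth.

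For the degree formula I would compute $\Lb\cdot C=\deg\nu^*\Lb$ equivariantly. The linearization $\mu$ makes $\nu^*\Lb$ a $\CC^*$-equivariant line bundle on $\PP^1$, where, by the normalization above, $\CC^*$ acts on the tangent line at the source with weight $+1$. A direct weight count on $\PP^1$ (for instance, comparing the fiber weights of $\mathcal{O}(1)$ at the two fixed points) shows that for such an action the degree of any equivariant line bundle equals the difference of its fiber weights at source and sink. Since $\nu$ is equivariant and sends source to source, these weights are exactly $\mu_\Lb(y_0)$ and $\mu_\Lb(y_\infty)$, giving $\Lb\cdot C=\mu_\Lb(y_0)-\mu_\Lb(y_\infty)$; note this difference is independent of the chosen linearization, since changing it shifts all weights by a common constant.

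For part (2) I would apply part (1) termwise and telescope. Because $\mu_\Lb$ is constant on each connected fixed component and the sink of $C_{i-1}$ lies in the same component as the source of $C_i$, the intermediate weights cancel:
$$\Lb\cdot\sum_i[C_i]=\sum_i\big(\mu_\Lb(\mathrm{source}\,C_i)-\mu_\Lb(\mathrm{sink}\,C_i)\big)=\mu_\Lb(\mathrm{source}\,C_1)-\mu_\Lb(\mathrm{sink}\,C_m).$$
By hypothesis $\mathrm{source}\,C_1$ lies in the source component and $\mathrm{sink}\,C_m$ in the sink component of $X$, which is precisely where a general orbit $C_{\mathrm{gen}}$ has its own source and sink; so part (1) gives $\Lb\cdot C_{\mathrm{gen}}$ equal to the same difference. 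Hence $\Lb\cdot(\sum_i[C_i]-[C_{\mathrm{gen}}])=0$ for every $\CC^*$-linearized ample $\Lb$.

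The final step, and the only real subtlety, is to upgrade this to numerical equivalence. Since $\Lb\cdot C$ depends only on the underlying line bundle and not on the linearization, and since some power of every line bundle on $X$ admits a $\CC^*$-linearization, the vanishing above holds for a set of classes spanning $N^1(X)_\QQ$. As numerical equivalence of $1$-cycles on the smooth projective $X$ is detected by pairing against $N^1(X)$, this forces $\sum_i[C_i]\equiv[C_{\mathrm{gen}}]$. I expect the main obstacle to be exactly this duality bookkeeping together with the linearizability of line bundles, whereas the geometric heart—smoothness and the weight formula—follows cleanly from the equalized hypothesis.
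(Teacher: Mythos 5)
A point of reference first: the paper does not actually prove Lemma \ref{equalized_action} — it states it as a summary of \cite[Lemma 2.2]{RW} and \cite[Cor.~2.15]{OSCRW} and gives no argument of its own. So your proposal can only be measured against the standard proofs in those references, and in substance it reconstructs them correctly: smoothness of the orbit closure via an equivariant analysis of the normalization at its two fixed points, the degree formula via the fiber weights of an equivariant line bundle on $\PP^1$ (with the correct observation that the answer is independent of the linearization), and part (2) by telescoping $\mu_\Lb$ across the intermediate fixed components and then upgrading the resulting equality of degrees to numerical equivalence, using that some power of every line bundle on the normal variety $X$ admits a $\CC^*$-linearization and that numerical equivalence of $1$-cycles is detected by pairing against line-bundle classes. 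All of these steps are sound, and the last one is indeed the only place where any bookkeeping is required.

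One imprecision in part (1) is worth repairing. You declare the open orbit $C\setminus\{y_0,y_\infty\}$ to be \emph{free} and accordingly choose a coordinate $z$ at $0\in\PP^1$ on which $\CC^*$ acts with weight $+1$. A priori a nontrivial orbit may have a finite stabilizer, of order $a$ say; then the lifted action on the normalization has weight $a$ on $T_0\PP^1$ and no coordinate of weight $1$ exists, so as written this step is unjustified. It is not a real obstruction: running your own expansion with the weight-$a$ coordinate, the leading term of $\nu$ lies in the weight-$ak$ eigenspace of $T_{y_0}X$; that eigenvector is normal to the fixed component through $y_0$, so the equalized hypothesis forces $ak=1$, hence $a=k=1$. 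In other words, triviality of the stabilizer is a consequence of the equalized hypothesis via exactly the computation you set up, not something you may assume at the outset. With that one-sentence repair your proof is complete and is, as far as I can tell, the argument behind the results the paper cites.
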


Finally, we recall that if the sink and source are of codimension one then assigning to the source of a general orbit its sink determines a birational morphism of the sink and source of the $\CC^*$-action. If the action is equalized then blowing up arbitrary sink/source (which may be just a point) yields a birational transformation of the resulting exceptional divisors; for this and more see \cite[Sect.~3]{OSCRW}. In particular we have the following corollary to 3.4 and 3.10 in \cite{OSCRW}.

\begin{cor}\label{action=>Cremona}
Let $\CC^*\times X\rightarrow X$ be an equalized action with $Y_0$ and $Y_\infty$ denoting its source and sink. Then assigning to a general point $x\in X$ the tangents to its orbit $t\mapsto t(x)$ at $Y_0$ and at $Y_\infty$ defines a birational map $\Phi: \PP(N_{Y_0/X})\dashrightarrow\PP(N_{Y_\infty/X})$.
\end{cor}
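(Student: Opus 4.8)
The plan is to deduce Corollary \ref{action=>Cremona} from Lemma \ref{equalized_action} together with the blow-up description of the birational transformation attached to an equalized $\CC^*$-action referenced in \cite[Sect.~3]{OSCRW}. First I would reduce to the situation where the source $Y_0$ and the sink $Y_\infty$ have codimension one, where the statement is classical: assigning to the source of a general orbit its sink gives a birational morphism between $Y_\infty$ and $Y_0$. The main point of the corollary is to handle the case where $Y_0$ and $Y_\infty$ may have higher codimension (e.g.\ be points), and the device for this is blowing up. Concretely, I would consider the blow-ups $\Bl_{Y_0}X$ and $\Bl_{Y_\infty}X$, whose exceptional divisors are exactly $\PP(N_{Y_0/X})$ and $\PP(N_{Y_\infty/X})$ by the convention fixed in the introduction.

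Next I would explain why the $\CC^*$-action lifts to these blow-ups. Since $Y_0$ and $Y_\infty$ are $\CC^*$-invariant (being fixed-point components), the action extends to $\Bl_{Y_0}X$ and to $\Bl_{Y_\infty}X$, and on the blow-up of $Y_0$ the new source becomes the exceptional divisor $\PP(N_{Y_0/X})$, while on the blow-up of $Y_\infty$ the new sink becomes $\PP(N_{Y_\infty/X})$. The key role of the \emph{equalized} hypothesis is that, with all normal weights equal to $\pm 1$, after blowing up the source (resp.\ sink) the exceptional divisor is again of codimension one and is itself the source (resp.\ sink) of the lifted action, so the codimension-one case applies to the common blow-up. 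This is precisely the content of \cite[Sect.~3]{OSCRW}, in particular of 3.4 and 3.10 cited in the statement.

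I would then make the geometric identification explicit. A general point $x\in X$ lies on a unique nontrivial orbit whose closure $C$ is a smooth rational curve by Lemma \ref{equalized_action}(1), with source $y_0\in Y_0$ and sink $y_\infty\in Y_\infty$. Because the action is equalized, $C$ meets $Y_0$ transversally at $y_0$ and $Y_\infty$ transversally at $y_\infty$, so the tangent direction of $C$ at $y_0$ defines a well-defined point of $\PP(N_{Y_0/X})$ and similarly at $y_\infty$ a point of $\PP(N_{Y_\infty/X})$. Assigning the sink-tangent to the source-tangent is exactly the rational map $\Phi$; under the two blow-ups these tangent directions are nothing but the points where the strict transform of $C$ meets the two exceptional divisors, so $\Phi$ is identified with the induced source-to-sink correspondence on the common blow-up.

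The hard part is verifying that $\Phi$ is genuinely \emph{birational} rather than merely dominant, i.e.\ that for a general choice of tangent direction at $Y_0$ there is a unique orbit realizing it, and symmetrically at $Y_\infty$. This is where the equalization hypothesis is essential: it forces each general orbit to be smooth and to meet the source and sink with multiplicity one, so that a general point of $\PP(N_{Y_0/X})$ is the source-tangent of exactly one orbit, giving an inverse to $\Phi$ on a dense open set. I would invoke \cite[Sect.~3]{OSCRW}, and specifically the results 3.4 and 3.10 cited, to supply this uniqueness; the remaining verification that the correspondence and its inverse are morphisms on dense opens then follows from the blow-up picture described above.
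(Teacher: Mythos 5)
Your proposal is correct and follows essentially the same route as the paper: the paper also reduces to the codimension-one source/sink case by blowing up $Y_0$ and $Y_\infty$ (using the convention that the exceptional divisors are $\PP(N_{Y_0/X})$ and $\PP(N_{Y_\infty/X})$), notes that equalization makes these exceptional divisors the source and sink of the lifted action, and defers the substantive birationality statement to 3.4 and 3.10 of \cite{OSCRW}. The paper in fact gives no further proof beyond this sketch and citation, so your write-up is a faithful (and somewhat more detailed) expansion of the intended argument.
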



\subsection{A special $\CC^*$-action on a Lagrangian Grassmanian}\label{sec:action-LG}
Let $V$ be a $\CC$ vector space of dimension $n$ and basis $e_1,\dots, e_n$. By $f_1,\dots,f_n$ we denote the dual basis of the dual space $V^*$. The space $V\oplus V^*$ is equipped with a symplectic form $\omega=\sum_i e_i^*\wedge f_i^*$. We may think about $\omega$ in terms of the duality $\omega: V\oplus V^*\rightarrow(V\oplus V^*)^*=V\oplus V^*$ such that $\omega(f_i)=e_i$ and $\omega(e_i)=-f_i$. A subspace $U\subset V\oplus V^*$ of dimension $n$ is called Lagrangian if the restriction of the form $\omega$ to $U$ is trivial, that is $U$ is a maximal isotropic subspace with respect to $\omega$. We note the following known property of the form $\omega$: if  $\pi_V: V\oplus V^*\rightarrow V$ is the projection and $U$ is Lagrangian then
$$\pi_V(U)=\{v\in V: \forall u\in U\cap V^*\  u(v)=0\}$$
A similar observation holds for the projection $\pi_{V^*}$.

The group $SL(V)$ acts on $V$ by the standard representation and on $V^*$ by dual representation. Then the resulting action on $V\oplus V^*$ preserves the form $\omega$. The Cartan torus in $SL(V)$ whose action in the basis $e_1,\dots,e_n$ is diagonal will be denoted by $\TTT$.

We consider the diagonal action $\CC^*\times V\ni (t,v)\mapsto t\cdot v\in V$ of weight 1 and its dual $\CC^*\times V^*\rightarrow V^*$ of weight $-1$; the resulting action $\delta: \CC^*\times V\oplus V^*\rightarrow V\oplus V^*$ preserves the form $\omega$. We note that $\delta(-1)$ is just multiplication by $-1$ on $V\oplus V^*$.

 The diagonal action $\delta$ of $\CC^*$ commutes with the action of $SL(V)$ described above. In fact, we have a homomorphism with finite kernel $SL(V)\times \CC^*\rightarrow GL(V)\hookrightarrow Sp(V\oplus V^*)$ into the group of symplectic transformations of $V\oplus V^*$ which we will denote by $Sp(n)$. The homomorphism maps the product of the diagonal torus of $SL(V)$ and $\CC^*$ to the big Cartan torus of $Sp(n)$.

The representation theory of $Sp(n)=Sp(V\oplus V^*)$ can be described by the root system of type ${\mathrm C}_n$ with the Dynkin diagram
\par \centerline{\ifx\du\undefined
  \newlength{\du}
\fi
\setlength{\du}{3.3\unitlength}
\begin{tikzpicture}
\pgftransformxscale{1.000000}
\pgftransformyscale{1.000000}

\definecolor{dialinecolor}{rgb}{0.000000, 0.000000, 0.000000} 
\pgfsetstrokecolor{dialinecolor}
\definecolor{dialinecolor}{rgb}{0.000000, 0.000000, 0.000000} 
\pgfsetfillcolor{dialinecolor}


\pgfsetlinewidth{0.300000\du}
\pgfsetdash{}{0pt}
\pgfsetdash{}{0pt}

\pgfpathellipse{\pgfpoint{-6\du}{0\du}}{\pgfpoint{1\du}{0\du}}{\pgfpoint{0\du}{1\du}}
\pgfusepath{stroke}
\node at (-6\du,0\du){};

\pgfpathellipse{\pgfpoint{4\du}{0\du}}{\pgfpoint{1\du}{0\du}}{\pgfpoint{0\du}{1\du}}
\pgfusepath{stroke}
\node at (4\du,0\du){};

\pgfpathellipse{\pgfpoint{14\du}{0\du}}{\pgfpoint{1\du}{0\du}}{\pgfpoint{0\du}{1\du}}
\pgfusepath{stroke}
\node at (14\du,0\du){};

\pgfpathellipse{\pgfpoint{24\du}{0\du}}{\pgfpoint{1\du}{0\du}}{\pgfpoint{0\du}{1\du}}
\pgfusepath{stroke}
\node at (24\du,0\du){};

\pgfpathellipse{\pgfpoint{34\du}{0\du}}{\pgfpoint{1\du}{0\du}}{\pgfpoint{0\du}{1\du}}
\pgfusepath{stroke}
\node at (34\du,0\du){};

\pgfpathellipse{\pgfpoint{44\du}{0\du}}{\pgfpoint{1\du}{0\du}}{\pgfpoint{0\du}{1\du}}
\pgfusepath{stroke}
\node at (44\du,0\du){};

\pgfsetlinewidth{0.300000\du}
\pgfsetdash{}{0pt}
\pgfsetdash{}{0pt}
\pgfsetbuttcap

{\draw (-5\du,0\du)--(3\du,0\du);}
{\draw (5\du,0\du)--(13\du,0\du);}
{\draw (25\du,0\du)--(33\du,0\du);}
{\draw (34.65\du,0.7\du)--(43.35\du,0.7\du);}
{\draw (34.65\du,-0.7\du)--(43.35\du,-0.7\du);}


{\pgfsetcornersarced{\pgfpoint{0.300000\du}{0.300000\du}}\definecolor{dialinecolor}{rgb}{0.000000, 0.000000, 0.000000}
\pgfsetstrokecolor{dialinecolor}
\draw (40.8\du,-1.2\du)--(37\du,0\du)--(40.8\du,1.2\du);}

\pgfsetlinewidth{0.400000\du}
\pgfsetdash{{1.000000\du}{1.000000\du}}{0\du}
\pgfsetdash{{1.000000\du}{1.00000\du}}{0\du}
\pgfsetbuttcap
{\draw (15.3\du,-1\du)--(23\du,-1\du);}

\node[anchor=west] at (48\du,0\du){${\rm C}_n$};

\node[anchor=south] at (-6\du,1.1\du){$\scriptstyle 1$};

\node[anchor=south] at (4\du,1.1\du){$\scriptstyle 2$};

\node[anchor=south] at (14\du,1.1\du){$\scriptstyle 3$};

\node[anchor=south] at (24\du,1.1\du){$\scriptstyle n-2$};

\node[anchor=south] at (34\du,1.1\du){$\scriptstyle n-1$};

\node[anchor=south] at (44\du,1.1\du){$\scriptstyle n$};

\end{tikzpicture}
}
\noindent
We use notation of Bourbaki \cite[Planche III]{Bourbaki} which is consistent with \cite{sagemath}. In particular we take $\varepsilon_i$, $i=1,\dots,n$ as the orthonormal base of the space of characters of the Cartan torus of $Sp(n)$. The roots are $\pm 2\varepsilon_i$ and $\pm\varepsilon_i\pm\varepsilon_j$, with simple roots $\alpha_i=\varepsilon_i-\varepsilon_{i+1}$, $i=1,\dots, n-1$, $\alpha_n=2\varepsilon_n$. The fundamental weights $\overline{\omega}_i =\varepsilon_1+\cdots+\varepsilon_i$
are generators of the Weyl chamber, the cone
of dominant weights. We recall that the Weyl group is the semidirect product $\SSS_n\ltimes \langle -1\rangle^n$, 
where the group $\SSS_n$ permutes $\varepsilon_i$'s and $\langle -1\rangle^n$ changes their sign.

The embedding $SL(V)\hookrightarrow Sp(V\oplus V^*)$ is associated to the projection of the space of characters of the Cartan torus of $Sp(V\oplus V^*)$ onto the space spanned by the first $n-1$ elements of the root base $\alpha_1,\dots,\alpha_{n-1}$.

The fundamental weight $\overline{\omega}_1=\varepsilon_1$ is associated to the standard representation of $Sp(n)$ on $V\oplus V^*$ with weights $+\varepsilon_i$ and $-\varepsilon_i$ associated to vectors, respectively, $e_i$ and $f_i$. In particular, the action $\delta$ of $\CC^*$ described above is associated to projection of the space of characters $\sum a_i\varepsilon_i\mapsto\sum a_i$ with the kernel generated by the first $n-1$ simple roots $\alpha_1,\dots,\alpha_{n-1}$

On the other hand the fundamental weight $\overline{\omega}_n= \sum_1^n\varepsilon_i$  is an irreducible representation $W=W_n$ contained in $\bigwedge^n(V\oplus V^*)$. The closed orbit of $Sp(n)$ in the  projectivization $\PP(W_n)$ is the Lagrangian Grassmannian $\LG=\LG_n$ (we use the subscript if we need to indicate the dimension of $V$ and skip it otherwise) parametrizing linear Lagrangian subspaces in $V\oplus V^*$.

For $I=\{i_1<\cdots<i_r\}\subset \{1,\dots, n\}$ by $e_I$ we denote the respective wedge product $e_{i_1}\wedge\cdots\wedge e_{i_r}$; a similar notation will be used for $f_J$, $J\subset\{1,\dots,n\}$. The vector $e_I\wedge f_J\in \bigwedge^n(V\oplus V^*)$ is isotropic with respect to the form $\omega$ if and only if $J$ is the complement of $I$, that is $J=I'$.
Thus the point $[e_I\wedge f_{I'}]\in\PP(W)$ is in $\LG$ and, in fact, all fixed points of the action of the Cartan torus of $Sp(n)$ on $\LG$ are of this type. The weights associated to these points are in the Weyl group orbit of the dominant weight $\overline{\omega}_n$ and they are  $\sum_{i\in I}\varepsilon_i-\sum_{j\in I'}\varepsilon_j$. In fact, in terms of the action of the Weyl group the fundamental weight $\overline{\omega}_n$ has isotropy $\SSS_n<\SSS_n\ltimes\langle -1\rangle^n$ hence its orbit in the space of characters depends only on the change of signs of $\varepsilon_i$'s.
We note that the convex hull of the weights associated to the fixed points is a hypercube.

The weights of the action of the Cartan torus on the tangent space to $\LG$ at the Borel fixed point $[e_1\wedge\cdots\wedge e_n]$ associated to the fundamental weight $\overline{\omega}_n$ are roots which have negative intersection with $\overline{\omega}_n$. They are $-2\varepsilon_i$, $i=1,\dots,n$  and $-\varepsilon_i-\varepsilon_j$, $1\leq i<j\leq n$. Thus, in particular, $\dim\LG={{n+1}\choose{2}}$. Again, in order to write down the weights of the action on the tangent space at the point $[e_I\wedge f_{I'}]$ one has to change signs of $\varepsilon_i$'s accordingly: if $i\in I$ then the sign is negative if $i\in I'$ then the sign is positive.

\begin{ex}\label{LGn=2}
Let us consider the case $n=2$. On the  projectivisation $\PP(\bigwedge^2(V\oplus V^*))$ we introduce Pl\"ucker coordinates $x_{ij}$ associated to the ordered basis $\{e_1,e_2,f_1,f_2\}$ of $V\oplus V^*$, so that a general vector in $\bigwedge^2(V\oplus V^*)$ is
$$x_{12}\cdot e_1\wedge e_2+ x_{13}\cdot e_1\wedge f_1 + x_{14}\cdot e_1\wedge f_2+x_{23}\cdot e_2\wedge f_1+x_{24}\cdot e_2\wedge f_2 + x_{34}\cdot f_1\wedge f_2.$$
The Grassmannian of planes in $V\oplus V^*$ is the Pl\"ucker quadric
$$x_{12}\cdot x_{34}-x_{13}\cdot x_{24}+x_{14}\cdot x_{23}=0$$
while the wedge intersection with the form $\omega$ adds a linear equation
$x_{13}+x_{24}=0$ which defines a linear subspace $W\subset V\oplus V^*$. Therefore, if we set $y=x_{13}=-x_{24}$ then $\PP(W)$ has linear coordinates
$x_{12}, x_{14}, x_{23}, x_{34}, y$ on which the Cartan torus of $Sp(2)$ acts with respective weights
$$-\varepsilon_1-\varepsilon_2,\ -\varepsilon_1+\varepsilon_2,\ +\varepsilon_1-\varepsilon_2,\ \varepsilon_1+\varepsilon_2,\ 0$$
(note the change of signs which follows from the fact that we pass to the dual representation). In $\PP(W)$ the Lagrangian Grassmanian $\LG_2$ is defined by the equation
$$x_{12}\cdot x_{34} + x_{14}\cdot x_{23} + y^2 = 0$$
and we are in the situation of \cite[Ex.~2.20]{BWW}.
The point $[e_1\wedge e_2]\in \LG_2\subset \PP(W)$ is contained in two lines $x_{14}=x_{34}=y=0$ and $x_{23}=x_{34}=y=0$. Both lines are orbits of the Cartan torus action with limits at $[e_1\wedge e_2]$ and $[e_2\wedge f_1]$, $[e_1\wedge f_2]$, respectively. The weights of the torus action at the tangent space to $\LG_2$ at $[e_1\wedge e_2]$ along these lines are $-2\varepsilon_2$ and $-2\varepsilon_1$, respectively. Also the conic $x_{14}=x_{23}=x_{12}\cdot  x_{34}+y^2=0$ is the closure of an orbit of the Cartan torus with limits at $[e_1\wedge e_2]$ and $[f_1\wedge f_2]$. The weight of the torus action on the tangent to this conic at the point $[e_1\wedge e_2]$ is $-\varepsilon_1-\varepsilon_2$.
\end{ex}

In the general set-up, for arbitrary $n$, as we noted above, the action $\delta(-1)$ is multiplication by $(-1)$ in $V\oplus V^*$ and therefore $(-1)$ acts trivially on the projectivisation $\PP(W)$ hence the action descends to the quotient $\CC^*\rightarrow\CC^*/\langle -1\rangle=\CC^*$. We will call the resulting action on $\PP(W)$ by $\widehat{\delta}$. We note that in terms of characters the projection $\CC^*\rightarrow \CC^*/\langle -1\rangle$ is associated to multiplication $\ZZ\rightarrow 2\cdot\ZZ\hookrightarrow\ZZ$.

By $\Lb$ we denote the restriction of the bundle $\cO(1)$ on $\PP(W)$ to $\LG$, sections of $\cO(1)$ are linear forms on $W$. The action $\delta$ yields $t\cdot e_I\wedge f_{I'}= t^{|I|-|I'|}e_I\wedge f_{I'}$ and therefore the standard linearization of $\delta$ on the fiber of $\cO(-1)$ over $[e_I\wedge f_{I'}]$ has weight $|I|-|I'|$, see e.g.~\cite[Ex.~2.7]{RW}. Thus we can choose the linearization $\mu_\Lb$ of the action $\widehat{\delta}$ such that $$\mu_\Lb([e_I\wedge f_{I'}]) = (n-|I|+|I'|)/2$$

We summarize our discussion in the following.

\begin{lemma}\label{action_on_LG}
The action $\widehat{\delta}$ on $(\LG,\Lb)$ commutes with the action of $SL(V)$ and has the following properties:
\begin{enumerate}[leftmargin=*]
\item the source of $\widehat{\delta}$ is at $y_0=[f_1\wedge\cdots\wedge f_n]$ and sink at $y_\infty=[e_1\wedge\cdots\wedge e_n]$
\item the fixed point set of $\widehat{\delta}$ decomposes into $n+1$ components $Y_i=\Grass(n-i,V)=\Grass(i,V^*)$, for $i=0,\dots,n$; with $\mu_\Lb(Y_i)=i$; $Y_0=\{y_\infty\}, Y_n=\{y_0\}$;
\item the action $\widehat{\delta}$ is equalized.
\end{enumerate}
\end{lemma}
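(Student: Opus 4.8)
The plan is to treat the three assertions in turn, using the explicit description of the $\TTT$-fixed points $[e_I\wedge f_{I'}]$, the tangent weights recorded just above, and the general principle from the set-up that $\mu_\Lb$ attains its maximum at the source and its minimum at the sink.

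First I would identify the fixed locus of $\widehat\delta$. Since $\delta$ acts on $V\oplus V^*$ with weight $+1$ on $V$ and $-1$ on $V^*$, a subspace is $\delta$-invariant precisely when it splits as $U=A\oplus B$ with $A\subseteq V$ and $B\subseteq V^*$. Imposing that $U$ be Lagrangian, the displayed description of $\pi_V(U)$ (equivalently, isotropy of $U$ together with the fact that $\omega$ restricts to the natural pairing between $V$ and $V^*$) forces $B$ to annihilate $A$, and a dimension count $\dim B=n-\dim A$ then gives $B=A^\perp$. Hence the fixed points are exactly the subspaces $U=A\oplus A^\perp$. Collecting those with $\dim A^\perp=i$, i.e. $\dim A=n-i$, yields a component $Y_i\cong\Grass(n-i,V)=\Grass(i,V^*)$, which is the first half of (2). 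The extreme cases $A=V$ and $A=0$ give the single points $U=V=[e_1\wedge\cdots\wedge e_n]=y_\infty$ and $U=V^*=[f_1\wedge\cdots\wedge f_n]=y_0$, so $Y_0=\{y_\infty\}$ and $Y_n=\{y_0\}$.

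For the weights I would simply evaluate the chosen linearization: a point of $Y_i$ is $[e_I\wedge f_{I'}]$ with $|I|=n-i$ and $|I'|=i$, so $\mu_\Lb(Y_i)=(n-|I|+|I'|)/2=i$, finishing (2). For (1) one route is to invoke that $\mu_\Lb$ is maximal at the source and minimal at the sink, giving source $Y_n=\{y_0\}$ and sink $Y_0=\{y_\infty\}$. I prefer also to check this directly: writing a general $U$ in Pl\"ucker coordinates $\sum_I x_I\,e_I\wedge f_{I'}$ with $x_\emptyset\neq0$ and $x_{\{1,\dots,n\}}\neq0$, one has $t\cdot\big(\sum_I x_I\,e_I\wedge f_{I'}\big)=\sum_I t^{\,|I|-|I'|}x_I\,e_I\wedge f_{I'}$, and the term with the smallest (resp.~largest) exponent dominates as $t\to0$ (resp.~$t\to\infty$); thus $\lim_{t\to0}t(U)=[f_1\wedge\cdots\wedge f_n]=y_0$ and $\lim_{t\to\infty}t(U)=[e_1\wedge\cdots\wedge e_n]=y_\infty$. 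This simultaneously confirms that the source and sink are single points.

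The substantive step is the equalization (3), and the only thing requiring care is the factor $\tfrac12$ in passing from $\delta$ to $\widehat\delta$. Because $\widehat\delta$ is the action induced by $\delta$ on $\CC^*/\langle-1\rangle\cong\CC^*$, a tangent vector on which $\delta$ acts with weight $d$ carries $\widehat\delta$-weight $d/2$; since $\delta$ corresponds to the character $\sum a_k\varepsilon_k\mapsto\sum a_k$, a tangent direction of $\TTT$-weight $\sum a_k\varepsilon_k$ has $\widehat\delta$-weight $\tfrac12\sum a_k$. Applying this at $[e_I\wedge f_{I'}]$ to the diagonal weights $\pm2\varepsilon_k$ (sign $-$ for $k\in I$, $+$ for $k\in I'$) gives $\widehat\delta$-weight $\pm1$, while a mixed weight for a pair $\{k,l\}$ gives $\widehat\delta$-weight $0$ exactly when one index lies in $I$ and the other in $I'$, and $\pm1$ otherwise. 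The zero-weight directions are precisely the $(n-i)i$ mixed pairs straddling $I$ and $I'$, matching $\dim\Grass(n-i,V)$, so they span $T Y_i$; every normal direction then has weight $\pm1$, which is equalization. As a consistency check one finds $\binom{n-i+1}{2}$ weights equal to $-1$, $\binom{i+1}{2}$ equal to $+1$, and $(n-i)i$ equal to $0$, summing to $\binom{n+1}{2}=\dim\LG$. I expect this bookkeeping, rather than any conceptual difficulty, to be the main chore.
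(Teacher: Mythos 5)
Your proof is correct, and its main departure from the paper is in part (2). The paper identifies the fixed components group-theoretically: since the $SL(V)$-action commutes with $\widehat{\delta}$, the fixed locus is $SL(V)$-invariant; every fixed point of the Cartan torus of $Sp(n)$ is of the form $[e_I\wedge f_{I'}]$, and its $SL(V)$-orbit is the Grassmannian $\Grass(|I|,V)=\Grass(|I'|,V^*)$, so these Grassmannians exhaust the fixed components. You instead argue by pure linear algebra: a $\delta$-invariant subspace splits into weight spaces $U=(U\cap V)\oplus(U\cap V^*)$, and isotropy plus the dimension count forces $U=A\oplus A^{\perp}$. This is more elementary and self-contained, and it shows directly that the fixed locus is exactly the disjoint union of Grassmannians without invoking the classification of big-torus fixed points or the commuting group action (which the paper has already set up and reuses later, e.g.\ in Proposition \ref{prop:fixedpoints} and Lemma \ref{lem:sourcesink}). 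For part (1) the paper simply cites \cite[Ex.~2.7]{RW}, whereas you supply both the moment-map argument (maximum/minimum of $\mu_\Lb$, which with $\mu_\Lb(Y_i)=i$ pins down source $=Y_n$ and sink $=Y_0$) and an explicit Pl\"ucker-coordinate limit; either suffices. For part (3) your argument coincides with the paper's --- the tangent weights at a Cartan fixed point are roots $\pm2\varepsilon_k$, $\pm\varepsilon_k\pm\varepsilon_l$, projecting to $\widehat{\delta}$-weights $\pm1$ or $0$ --- but you make explicit two points the paper leaves implicit: the factor $\tfrac12$ coming from $\CC^*\to\CC^*/\langle-1\rangle$, and the identification of the zero-weight directions with $T_pY_i$ (via the count ${n-i+1\choose 2}+{i+1\choose 2}+(n-i)i={n+1\choose 2}$), which is what actually guarantees that the \emph{normal} weights are $\pm1$, i.e.\ equalization in the sense defined in the paper. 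So: same skeleton for (3), a genuinely different and more hands-on route for (2), and a completed rather than cited argument for (1).
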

\begin{proof}
The first statement is obvious, see e.g.~\cite[Ex.~2.7]{RW}. For the second statement we note that the $SL(V)$-orbit of the point $[e_I\wedge f_{I'}]$ is a Grassmannian $\Grass(|I|,V)=\Grass(|I'|,V^*)$. Since all fixed points of the action of the big torus in $SL(V)$ on $\LG$ are of type $[e_I\wedge f_{I'}]$ and the action of $SL(V)$ commutes with $\widehat{\delta}$ we conclude that these are all fixed point components. For the last claim we note that the weights of the action on the tangent space of any Cartan torus fixed point are of the form $\pm 2\varepsilon_i$ or $\pm\varepsilon_i\pm\varepsilon_j$, with $i\ne j$,  with signs depending on the type  of the fixed point. Thus the weights of the action $\widehat{\delta}$ can be either $\pm 1$ or 0.
\end{proof}
By looking at the weights of the action at fixed points, as in the proof above, we conclude the following.
\begin{cor}\label{cor:small_orbits}
Let $C\subset\LG\subset\PP(W)$ be the closure of a nontrivial orbit of the action $\widehat{\delta}$ with limits at $Y_r$ and $Y_s$. Assume that $C$ is invariant with respect to the action of the Cartan torus in $SL(V)$. Then either $C$ is a line and $|r-s|=1$ or $C$ is a conic and $|r-s|=2$. In particular, if $[e_{I_1}\wedge f_{I_1'}],\ [e_{I_2}\wedge f_{I_2'}]\ \in C$ are the limit points of the orbit then $|I_1\symdif I_2|=\deg C$, where $I_1\symdif I_2$ is the symmetric difference of $I_1, I_2$.

Conversely, let $I_1,I_2\subseteq\{1,\dots,n\}$ be such that $|I_1\symdif I_2| = 1 \text{ or } 2$ and $|I_1|\ne |I_2|$. Then there exists a unique line or a conic, respectively, in $\LG$ which is a closure of an orbit of the action $\widehat{\delta}$ which contains $e_{I_1}\wedge f_{I_1'}$ and $e_{I_2}\wedge f_{I_2'}$.
\end{cor}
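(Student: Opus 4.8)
The plan is to promote the invariance of $C$ from the one-parameter group $\widehat{\delta}$ to the full Cartan torus of $Sp(n)$, and then read everything off the root system. First I would note that, by the homomorphism $SL(V)\times\CC^*\to Sp(n)$ with finite kernel, the subgroup generated by $\TTT$ and $\widehat{\delta}$ acts on $\PP(W)$ as the whole Cartan torus $\widehat{\TTT}$ of $Sp(n)$. Hence a curve $C$ which is both a $\widehat{\delta}$-orbit closure and $\TTT$-invariant is $\widehat{\TTT}$-invariant, so its two limits are $\widehat{\TTT}$-fixed points $p_k=[e_{I_k}\wedge f_{I_k'}]$, $k=1,2$. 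Since $\Lb$ is ample and $C$ is a nontrivial orbit, $\Lb\cdot C>0$, so by Lemma \ref{equalized_action}(1) the source and sink have different $\mu_\Lb$-values; thus they lie in distinct fixed components $Y_r,Y_s$, forcing $|I_1|\ne|I_2|$, and — using $\mu_\Lb([e_I\wedge f_{I'}])=n-|I|$ from Lemma \ref{action_on_LG} — the curve $C$ is smooth with $\deg C=\Lb\cdot C=|r-s|=\bigl||I_1|-|I_2|\bigr|$.

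Next I would identify $C$ with a root. The tangent line $T_{p_1}C$ is a $\widehat{\TTT}$-eigenline, so its weight is one of the tangent weights listed just before Example \ref{LGn=2}, i.e.\ a root $\alpha\in\{\pm2\varepsilon_i\}\cup\{\pm\varepsilon_i\pm\varepsilon_j\}$, and the second limit is the reflected fixed point $p_2=s_\alpha(p_1)$. Encoding $I_1$ by the sign pattern of the weight $\sum_k\sigma_k\varepsilon_k$ (so $\sigma_k=+$ iff $k\in I_1$), the reflection $s_\alpha$ acts by: flipping a single sign when $\alpha=\pm2\varepsilon_i$, giving $|I_1\symdif I_2|=1$; flipping two signs in the same direction when $\alpha=\pm(\varepsilon_i+\varepsilon_j)$, giving $|I_1\symdif I_2|=2$ with $\bigl||I_1|-|I_2|\bigr|=2$; and merely swapping two signs when $\alpha=\pm(\varepsilon_i-\varepsilon_j)$, which leaves $|I_1|=|I_2|$ and is therefore excluded by the previous paragraph (equivalently, such an $\alpha$ has trivial $\widehat{\delta}$-weight under the projection $\sum a_i\varepsilon_i\mapsto\sum a_i$, so $C$ would be $\widehat{\delta}$-fixed). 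Comparing with $\deg C=\bigl||I_1|-|I_2|\bigr|$ yields $\deg C=|I_1\symdif I_2|\in\{1,2\}$; a smooth curve of degree one is a line and of degree two a conic, which is the first assertion.

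For the converse I would run this dictionary backwards. Given $I_1,I_2$ with $|I_1|\ne|I_2|$ and $|I_1\symdif I_2|\in\{1,2\}$, parity forces $|I_1\symdif I_2|=\bigl||I_1|-|I_2|\bigr|$, so either $I_2=I_1\symdif\{i\}$, attached to $\alpha=\pm2\varepsilon_i$, or $I_2=I_1\symdif\{i,j\}$ with $i,j$ on the same side, attached to $\alpha=\pm(\varepsilon_i+\varepsilon_j)$; the swap $\varepsilon_i-\varepsilon_j$ is ruled out by $|I_1|\ne|I_2|$. The sought line (resp.\ conic) is then the closure of the orbit of the root subgroup $U_\alpha$ through $p_1$, i.e.\ the unique $\widehat{\TTT}$-invariant $\PP^1$ whose tangent at $p_1$ is the $\alpha$-eigenline. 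Uniqueness is the crucial point, and it follows from multiplicity one of tangent weights: at the Borel fixed point the $\binom{n+1}{2}$ weights $-2\varepsilon_i$ and $-\varepsilon_i-\varepsilon_j$ are pairwise distinct, hence so are the tangent weights at every $p_k$ after the appropriate sign changes, so the $\alpha$-eigendirection — and therefore the invariant curve realizing it — is unique.

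The step I expect to be the main obstacle is the input from the theory of torus-invariant curves used above: that a $\widehat{\TTT}$-invariant irreducible curve joining two fixed points is exactly the closure of a root-subgroup orbit, with second endpoint $s_\alpha(p_1)$. In the homogeneous space $\LG=Sp(n)/P$ this is standard GKM theory, but to stay self-contained and inside the explicit coordinates of the paper I would instead verify it by hand from the $SL_2$- (resp.\ $Sp_2$-)triples attached to $\alpha$: the case $n=2$ of Example \ref{LGn=2} already exhibits all three root types, and the general statement is recovered by restricting the action to the coordinate subspaces spanned by $e_i,f_i$ (resp.\ $e_i,f_i,e_j,f_j$), on which $\LG$ meets an $\LG_1$ or $\LG_2$ where these curves are precisely the lines and the conic found there.
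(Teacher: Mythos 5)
Your proposal is correct and takes essentially the same route as the paper's own proof: both arguments promote $C$ to a curve invariant under the full Cartan torus of $Sp(n)$, identify its tangent direction at a limit point $[e_I\wedge f_{I'}]$ with one of the pairwise distinct root weights $\pm2\varepsilon_i$, $\pm\varepsilon_i\pm\varepsilon_j$, discard the type $\pm(\varepsilon_i-\varepsilon_j)$ (trivial $\widehat{\delta}$-weight), and get the degree and the line/conic dichotomy from Lemma \ref{equalized_action} together with $\mu_\Lb([e_I\wedge f_{I'}])=n-|I|$, with uniqueness coming from the multiplicity-one eigendirections. Even your proposed "by hand" verification of the standard facts on invariant curves --- restricting to the coordinate subspaces spanned by $e_i,f_i$ (resp.\ $e_i,f_i,e_j,f_j$) and reducing to Example \ref{LGn=2} --- is precisely the mechanism of Lemma \ref{lem:localtoglobal}, which is what the paper invokes for the converse.
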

\begin{proof}
The sink and the source of the orbit whose closure is $C$ is a point of type $e_{I}\wedge f_{I'}$ with suitable choice of the set of indices $I$. Let $e_I\wedge f_{I'}$ be the source of the orbit. The weights of the action of the Cartan torus on the tangent space to $\LG$ at $e_I\wedge f_{I'}$ are some of the roots and they determine unique eigenvectors: each of these eigenvectors is tangent to the unique 1-dimensional orbit of the torus whose closure is either a line or a conic with respect to $\Lb$. Thus the orbit in question is one of them and the first claim follows by \ref{equalized_action}. The second statement can be proved similarly: in fact a more general version of this statement is \ref{lem:localtoglobal}.
\end{proof}

In Section \ref{sec:K-thGM} we will provide one more approach to classification of orbits of the action $\widehat{\delta}$.
In view of \ref{action=>Cremona} we get the following.

\begin{lemma}\label{action=>inversion_of_matrices}
There exists a natural identification of the tangent space at $y_0$ and $y_\infty$ to the space of $n\times n$ symmetric matrices so that the birational map $\Phi:\PP(T_{y_0})\dashrightarrow\PP(T_{y_\infty})$ defined by the action $\widehat{\delta}$ is the Cremona transformation associated to inversion of symmetric matrices or, equivalently, of symmetric tensors $\PP(S^2V)\dashrightarrow\PP(S^2V^*)$.
\end{lemma}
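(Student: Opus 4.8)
The plan is to compute $\Phi$ explicitly in affine charts centered at the source $y_0$ and the sink $y_\infty$, using the graph description of Lagrangian subspaces. Since $y_\infty=[e_1\wedge\cdots\wedge e_n]$ and $y_0=[f_1\wedge\cdots\wedge f_n]$ correspond to the Lagrangian subspaces $V$ and $V^*$ spanned by the $e_i$ and the $f_i$ respectively, I would first set up charts around them. A Lagrangian subspace transverse to $V^*$ is the graph $U_A=\{v+Av:v\in V\}$ of a linear map $A\colon V\to V^*$; the computation $\omega((v,Av),(w,Aw))=(Aw)(v)-(Av)(w)$ shows that the isotropy condition is exactly the symmetry $A\in S^2V^*$. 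This is a chart around $y_\infty=U_0$ whose coordinate $A$ gives the identification $T_{y_\infty}\LG\cong S^2V^*$. Symmetrically, a subspace transverse to $V$ is the graph $U_B=\{B\phi+\phi:\phi\in V^*\}$ of a symmetric $B\in S^2V$, giving a chart around $y_0=U_0$ and the identification $T_{y_0}\LG\cong S^2V$. In the chosen bases these are precisely the spaces of $n\times n$ symmetric matrices, so the identifications promised in the statement come directly from the charts (they also match the intrinsic description $T_U\LG\cong \Sym^2U^*$ obtained from $\omega$).

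The crucial point is the transition between the two charts: a subspace transverse to both $V$ and $V^*$ has $U_A=U_B$ if and only if $B=A^{-1}$ as mutually inverse symmetric maps $V^*\leftrightarrow V$. Thus inversion of symmetric matrices is already built into the overlap of the two affine charts, and the whole proof reduces to showing that $\Phi$ reads off these two chart coordinates at the source and at the sink.

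Next I would run the orbit. From $\delta(t)(v,\phi)=(tv,t^{-1}\phi)$ one gets $\delta(t)\cdot U_A=U_{t^{-2}A}$, and correspondingly $\delta(t)\cdot U_B=U_{t^2B}$; passing to the effective action $\widehat\delta$ only reparametrises $t\mapsto t^2=:s$, so in the $y_0$-chart the orbit is $s\mapsto sB$ and in the $y_\infty$-chart it is $s\mapsto s^{-1}A$. This recovers $y_0=\lim_{s\to0}$ and $y_\infty=\lim_{s\to\infty}$ in agreement with Lemma \ref{action_on_LG}, and since $\widehat\delta$ is equalized the orbit closure is a smooth curve whose tangents at the two limits are genuine nonzero vectors (the $\widehat\delta$-parametrisation is exactly what makes the velocity at the source nonzero). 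For a general, hence invertible, $U$ with $A=B^{-1}$, the tangent direction at the source $y_0$ is $[B]\in\PP(S^2V)$ and the tangent direction at the sink $y_\infty$ is $[A]=[B^{-1}]\in\PP(S^2V^*)$. As the source and sink are single points, Corollary \ref{action=>Cremona} gives $\Phi\colon\PP(T_{y_0}\LG)\dashrightarrow\PP(T_{y_\infty}\LG)$ sending the former tangent to the latter, i.e.\ $\Phi([B])=[B^{-1}]$, which is precisely the Cremona inversion map $\PP(S^2V)\dashrightarrow\PP(S^2V^*)$.

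I expect the only real obstacle to be bookkeeping rather than genuine difficulty: fixing the canonical identification $T_U\LG\cong \Sym^2U^*$ with the correct duality so that the symmetry of $A$ and the transition law $B=A^{-1}$ come out correctly, and tracking the reparametrisation $t\mapsto t^2$ relating $\delta$ and $\widehat\delta$ so that the source and sink (and hence the direction of $\Phi$) are not interchanged. As a consistency check I would note that $SL(V)$ fixes both $V$ and $V^*$, hence both $y_0$ and $y_\infty$, so $\Phi$ is $SL(V)$-equivariant for the natural action on $S^2V$ and its dual on $S^2V^*$; inversion is the natural equivariant birational map of this shape, which corroborates the explicit computation.
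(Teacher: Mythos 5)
Your proposal is correct and follows essentially the same route as the paper's proof: the paper's normalization $[A\,\vert\,B]\sim[B^{-1}A\,\vert\,I]\sim[I\,\vert\,A^{-1}B]$ of a Lagrangian subspace is exactly your pair of graph charts with transition $B=A^{-1}$, the orbit computation $\widehat{\delta}(t)[I\,\vert\,M]=[I\,\vert\,t^{-1}M]$ matches your $s\mapsto s^{-1}A$, and symmetry is likewise extracted from the isotropy condition. The only cosmetic difference is that you phrase the charts as graphs of linear maps while the paper uses $n\times 2n$ matrices modulo row operations, and you make the chart-transition-equals-inversion observation explicit where the paper leaves it implicit in the row-reduction.
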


\begin{proof}
The arguments are standard, we recall them for convenience of the reader. We can write a vector $w\in V\oplus V^*$ in our base as $w=\sum_i a_i e_i+ b_if_i$ and therefore any $n$-linear subspace $U$ in $V\oplus V^*$ spanned by vectors $w_j$, $j=1,\dots,n$, can be presented as $n\times 2n$ matrix $[A\ \vert\ B]$ of rank $n$ with $A=[a_{ji}], B=[b_{ji}]$ square matrices such that $w_j=\sum_i a_{ji}e_i+b_{ji}f_i$.
Changing basis in $U$ is equivalent to left multiplication by elements of $GL(U)$ and therefore if both $A$ and $B$ are invertible then, as points in the Grassmannian, we have equivalence $$[A\ \vert\ B]\ \sim\ [B^{-1}\cdot A\ \vert\ I]\ \sim\ [I\ \vert\ A^{-1}\cdot B]$$ where $I$ is the identity matrix.

In these coordinates the action $\delta$ is as follows $\delta(t)[A\ \vert\ B]=[t\cdot A\ \vert\ t^{-1}\cdot B]$ hence we can write $$\widehat{\delta}(t)[I\ \vert\ A^{-1}\cdot B] = [I\ \vert\  t^{-1}\cdot A^{-1}\cdot B]\ \ {\rm and}\ \  \widehat{\delta}(t)[B^{-1}\cdot A\ \vert\ I] = [t\cdot B^{-1}\cdot A\ \vert\  I]$$ Thus the tangent to the orbit of $[A\ \vert\ B]$ at the sink $y_\infty=[I\ \vert\ 0]$ can be identified with $A^{-1}\cdot B$ while the tangent at the source $y_0=[0\ \vert\ I]$ is $B^{-1}\cdot A$. Therefore the rational map $\Phi:\PP(T_{y_0})\dashrightarrow \PP(T_{y_\infty})$ is inversion of matrices. As the tangent to Grassmannian $\Grass(n,V\oplus V^*)$ at $y_0=[V^*]$ is identified with $\Hom(V^*,(V\oplus V^*)/V^*)\iso V\otimes V$ and a similar statement is true for $y_\infty$ we get the map $\Phi: \PP(V\otimes V)\dashrightarrow\PP(V^*\otimes V^*)$.

Hence, it remains to show that if $U$ is presented by $[I\ \vert\ M]$, where $M$ is $n\times n$ matrix and $U$ is Lagrangian, i.e.~isotropic with respect to $\omega$, then $M$ is symmetric. However, by the presentation of $\omega$ introduced at the beginning of this subsection $U$ is isotropic if and only if $$0\ =\ [I\ \vert\ M]\cdot [M\ \vert -I]^\intercal\ =\ M^\intercal - M $$
hence the claim follows.
\end{proof}

\subsection{Complete Quadrics}

We will call a smooth projective variety $X$  \emph{convex} if it is covered by rational curves and for every morphism $f : \PP^1 \to X$ one has $\HH^1(\PP^1, f^*(TX)) =0$, see \cite[0.4]{FP}. The main example of convex varieties are rational homogeneous spaces $G/P$ where $G$ is semi-simple algebraic group and $P$ is its parabolic subgroup. In what follows we will fix an ample line bundle $\Lb$ on $X$.

Let us recall that the Kontsevich moduli space $\Mbar_{g,n}(X,\beta)$ parametrizes equivalence classes of stable maps of genus $g$ quasi-stable curves $C$ with $n$ marked points to an algebraic variety $X$, so that the pushforward of the fundamental class of $C$ is equal to a fixed class $\beta\in \HH_2(X,\ZZ)$. We refer to \cite[Section~1.1]{FP} for the proper definition of this moduli space. In general, the moduli space $\Mbar_{g,n}(X,\beta)$ can carry any possible singularity, but in the case when $g=0$ and $X$ is convex, the Kontsevich moduli space is much nicer. Namely, we have the following result from \cite[Thm.~2]{FP}.

\begin{thm}\label{thm:konts}
Let $X$ be convex manifold, then
\begin{enumerate}
\item the Kontsevich moduli space $\Mbar_{0,n}(X,\beta)$ is a smooth projective orbifold;
\item  points of $\Mbar_{0,n}(X,\beta)$ which represent stable maps without automorphisms are smooth.
\end{enumerate}
\end{thm}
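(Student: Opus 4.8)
The plan is to treat the two assertions separately: first establish that $\Mbar_{0,n}(X,\beta)$ exists as a proper (indeed projective) Deligne--Mumford stack with smooth coarse space, and then read off (2) from the local structure of this stack at points with trivial automorphisms.

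For the existence and properness I would follow the standard route. Fixing the ample bundle $\Lb$, a stable map has image of bounded $\Lb$-degree $\Lb\cdot\beta$, so the stable maps of class $\beta$ form a bounded family; one realizes $\Mbar_{0,n}(X,\beta)$ as a quotient stack of a locally closed subscheme of a suitable Hilbert scheme (of graphs of maps, or of curves in $X\times\PP^N$) by a reductive group. Properness is the valuative criterion, i.e.\ existence and uniqueness of stable reduction for one-parameter families of maps: given a family over a punctured disk, after a finite base change one fills in the central fibre by semistable reduction of the domain followed by contraction of the components destabilizing the map, and projectivity of the coarse space follows from a GIT/ampleness argument. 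In the present context this is precisely \cite[Thm.~2]{FP}, so I would simply invoke it; the content I actually want to reprove is the smoothness, which I obtain from deformation theory relative to the smooth Artin stack $\mathfrak{M}_{0,n}$ of prestable $n$-pointed genus-zero curves. There is a forgetful morphism $\Mbar_{0,n}(X,\beta)\to\mathfrak{M}_{0,n}$ whose relative deformation theory is governed by $R\pi_*\mu^*TX$ for the universal curve $\pi$ and universal map $\mu$: at a point $[\mu:(C,p_1,\dots,p_n)\to X]$ the relative tangent space is $\HH^0(C,\mu^*TX)$ and the relative obstruction space is $\HH^1(C,\mu^*TX)$. Since $\mathfrak{M}_{0,n}$ is smooth (deformations of a tree of $\PP^1$'s are unobstructed), it suffices to prove that $\HH^1(C,\mu^*TX)=0$ for every genus-zero stable map; then the forgetful morphism is smooth and so is the total space.

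The crux, and the main obstacle, is this vanishing for $C$ a tree of rational curves. The first, slightly subtle, point is that convexity is stronger than it appears: if some component $C_i\cong\PP^1$ had $\mu^*TX|_{C_i}=\bigoplus_k\cO(a_k)$ with some $a_k\le -1$, then precomposing $\mu|_{C_i}$ with a degree-$d$ cover $\PP^1\to\PP^1$ would produce a morphism with $\HH^1(\PP^1,(\,\cdot\,)^*TX)\ne 0$ for $d\gg 0$, contradicting convexity. Hence every $\mu^*TX|_{C_i}$ is globally generated; in particular $\HH^1(C_i,\mu^*TX|_{C_i})=0$ and evaluation at any point is surjective. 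The second point is a gluing induction along the tree. From the normalization sequence $0\to\mu^*TX\to\nu_*\nu^*\mu^*TX\to\bigoplus_{q}(\mu^*TX)_q\to 0$ (sum over the nodes $q$), the long exact sequence together with $\HH^1=0$ on each component identifies $\HH^1(C,\mu^*TX)$ with the cokernel of $\bigoplus_i\HH^0(C_i,\mu^*TX|_{C_i})\to\bigoplus_q(\mu^*TX)_q$. I would prove this map is surjective by stripping off a leaf $C_k$ of the tree: the inductive hypothesis handles all nodes not meeting $C_k$, while global generation of $\mu^*TX|_{C_k}$ lets me prescribe the required value of a section at the unique node lying on $C_k$. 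This is where the argument really lives; the naive per-component vanishing is \emph{insufficient} (a line bundle restricting to $\cO(-1)$ on each of two glued lines has $\HH^1=0$ on each component but $\chi=-1$, hence $\HH^1\ne 0$ globally), and it is exactly the upgrade to global generation that rescues the induction.

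Finally, stability forces the automorphism group of each stable map to be finite, so $\Mbar_{0,n}(X,\beta)$ is a smooth Deligne--Mumford stack and its coarse space is an orbifold, giving (1). For (2), at a stable map with trivial automorphism group the stack is represented étale-locally by a scheme---the automorphisms are precisely what creates the orbifold points---so such a point is a smooth point of the coarse moduli space, equivalently the universal deformation carries no automorphisms and the local moduli functor is represented by a smooth scheme.
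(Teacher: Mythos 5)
The paper does not actually prove this statement: it is imported wholesale as \cite[Thm.~2]{FP}, so there is no in-paper argument to compare yours against; the right benchmark is Fulton--Pandharipande's own proof. Measured against that, your proposal is correct and follows essentially the same route. The heart of the matter --- and you locate it correctly --- is the vanishing $\HH^1(C,\mu^*TX)=0$ for a genus-zero stable map from a tree $C$: your observation that convexity, applied to composites with multiple covers $\PP^1\to\PP^1$, forces each restriction $\mu^*TX|_{C_i}$ to have nonnegative splitting type (hence be globally generated), followed by the normalization-sequence induction that strips off a leaf of the tree and uses surjectivity of evaluation at the remaining node, is precisely the argument in \cite{FP}; your parenthetical warning that per-component $\HH^1$-vanishing alone would not suffice (the glued $\cO(-1)\cup\cO(-1)$ example) is exactly the point that makes global generation indispensable. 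Your packaging of the conclusion is a cosmetic modernization: you deduce smoothness from smoothness of the forgetful morphism to the smooth Artin stack of prestable genus-zero curves, with relative tangent/obstruction spaces $\HH^0(C,\mu^*TX)$ and $\HH^1(C,\mu^*TX)$, whereas Fulton--Pandharipande work with explicit local deformation spaces modulo the finite automorphism group of the stable map; the two are equivalent, and both yield (1) and (2) in the stated form, with (2) coming from triviality of the stabilizer at the given point. The one caveat is a mild circularity in your opening paragraph: you ``simply invoke'' \cite[Thm.~2]{FP} for existence, properness and projectivity, which is part of the very statement being proved. Since you announce explicitly that smoothness is the content you reprove, this is a legitimate division of labor, but a fully self-contained proof would require carrying out the boundedness and rigidification/quotient construction rather than citing it.
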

The main result of this subsection is the following consequence of the above theorem.
\begin{thm}\label{thm:orbitsmoduli}
Let $X$ be a convex smooth projective variety with an effective  and equalized $\CC^*$-action $\alpha: \CC^*\times X\rightarrow X$. By $Y_0$ and $Y_\infty$ we denote the source and the sink of the action and by $\beta\in\HH_2(X,\ZZ)$ we denote the class of the closure of a general orbit of $\alpha$. Then there exists a smooth projective variety, the moduli space $\Mo$ which is a closed subset of $\Mbar_{0,0}(X,\beta)$ and which contains an open part parametrizing $\CC^*$-orbits in $X$ which have source at $Y_0$ and sink at $Y_\infty$.
The moduli space $\Mo$ comes with two natural morphisms $\pi_0:\Mo\to \PP(N_{Y_0/X})$ and $\pi_\infty:\Mo\to \PP(N_{Y_\infty/X})$, which resolve the birational map $\Phi_\alpha:\PP(N_{Y_0/X})\dashrightarrow\PP(N_{Y_\infty/X})$ which is associatd to $\alpha$, as defined in \ref{action=>Cremona}. In other words, the following diagram is commutative:
\[
\begin{tikzcd}
 &\Mo \arrow[rd, "\pi_\infty"] \arrow[ld, "\pi_0"']&   \\
\PP(N_{Y_0/X}) \arrow[rr,dashrightarrow,"\Phi_\alpha"]&                                   &\PP(N_{Y_\infty/X}),
\end{tikzcd}
\]
and $\pi_0, \pi_\infty$ are isomorphisms away from the indeterminacy locus of $\Phi_\alpha$.
\end{thm}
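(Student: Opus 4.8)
The plan is to realize $\Mo$ as a connected component of the $\CC^*$-fixed locus of the action that $\alpha$ induces on the Kontsevich space, and then to read off the two morphisms from the tangent directions of the orbits at the source and the sink. First I would equip $\Mbar_{0,0}(X,\beta)$ with the induced action $t\cdot[f:\PP^1\to X]=[t\circ f]$. By Lemma \ref{equalized_action}(1) the closure $C_x$ of a general orbit is a smooth rational curve of class $\beta$ with source in $Y_0$ and sink in $Y_\infty$; the corresponding stable map is an embedding, hence automorphism-free, and it is a fixed point of the induced action because $t$ acts on $C_x$ by a reparametrisation fixing its two limit points. This gives a morphism from the (irreducible) space of general orbits to $\Mbar_{0,0}(X,\beta)$ whose image $U$ lands in the fixed locus $F:=\Mbar_{0,0}(X,\beta)^{\CC^*}$, and I would set $\Mo:=\overline{U}$.

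Next comes smoothness. By Theorem \ref{thm:konts} the space $\Mbar_{0,0}(X,\beta)$ is a smooth projective orbifold, and the fixed locus of a $\CC^*$-action on a smooth Deligne--Mumford stack is a smooth closed suborbifold. The crucial step is to show that $U$ is \emph{open} in $F$: a $\CC^*$-invariant first-order deformation of a smoothly embedded orbit closure $C_x$ is again a nearby orbit, so $U$ fills out an open neighbourhood of each of its points inside $F$. As $U$ is irreducible it lies in a single irreducible component $F_0$ of the smooth $F$, and being open in $F$ it is open dense in $F_0$; hence $\Mo=\overline{U}=F_0$ is a connected component of $F$ and is smooth. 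To see that $\Mo$ is an honest variety and not merely an orbifold, I would check that every point of $\Mo$ is automorphism-free: by equalization and Lemma \ref{equalized_action}(2) the flat limits of orbit closures are connected chains of embedded invariant curves joining $Y_0$ to $Y_\infty$, with no contracted or multiply-covered components (a contracted component would carry at most two nodes in such a chain, contradicting stability), and such chains have no nontrivial automorphisms.

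Finally, the two morphisms and the resolution. For $[f]\in\Mo$ with domain a chain $C_1\cup\dots\cup C_m$, I would let $\pi_0([f])$ be the tangent direction of $f(C_1)$ at its source point in $Y_0$, regarded in $\PP(N_{Y_0/X})$, and symmetrically $\pi_\infty([f])$ the tangent direction of $f(C_m)$ at its sink in $Y_\infty$. Over the universal family on $\Mo$ the source point and the tangent direction of the first link vary algebraically, so this prescription is a genuine morphism, and likewise for $\pi_\infty$. On $U$ these agree with the tangent-direction assignments of Corollary \ref{action=>Cremona}, whence $\Phi_\alpha\circ\pi_0=\pi_\infty$ on $U$ and the diagram commutes as rational maps. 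For a point $v\in\PP(N_{Y_0/X})$ outside the indeterminacy locus of $\Phi_\alpha$ there is exactly one orbit with source tangent $v$ (the content of Corollary \ref{action=>Cremona} and of the Bialynicki--Birula description of the source cell), so $\pi_0^{-1}(v)$ is a single reduced point; thus $\pi_0$, and symmetrically $\pi_\infty$, is an isomorphism over the complement of the indeterminacy locus.

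The main obstacle I expect is the smoothness argument, and within it the precise control of the degenerate stable maps: one must rule out contracted and multiply-covered components in the limits so that $\Mo$ stays inside the automorphism-free locus, and one must verify that $U$ is genuinely open in $F$, equivalently that the $\CC^*$-fixed part of the deformation space of $C_x$ is exhausted by nearby orbits. Both points rest on equalization through Lemma \ref{equalized_action} and on the convexity of $X$ through Theorem \ref{thm:konts}.
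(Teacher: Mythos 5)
Your overall strategy coincides with the paper's: lift $\alpha$ to an action on $\Mbar_{0,0}(X,\beta)$, identify $\Mo$ inside its fixed locus, obtain smoothness from convexity (Theorem \ref{thm:konts}) together with automorphism-freeness and smoothness of $\CC^*$-fixed loci, and define $\pi_0,\pi_\infty$ by tangent directions at $Y_0$ and $Y_\infty$. However, there is a genuine gap at exactly the step you flag as the main obstacle: the structure of an \emph{arbitrary} point of $\Mo$, which is the content of the paper's Lemma \ref{maps_in_Mo}. You assert that the degenerate points are chains of embedded orbit closures ``by equalization and Lemma \ref{equalized_action}(2)'', but that lemma goes in the wrong direction: it says that a chain of orbit closures joining source to sink has cycle class equal to $\beta$; it says nothing about what a $\CC^*$-fixed stable map of class $\beta$ must look like. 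Concretely, a component of a fixed stable map need not be an orbit closure or contracted: invariance of its image only forces the image to be an orbit closure \emph{or to lie inside a fixed component} $Y_i$, where it may be a curve of positive degree; the component may also be a multiple cover of a shorter orbit closure. Neither possibility is excluded by stability, which is the only tool your parenthetical invokes (and that parenthetical is circular anyway, since it presupposes the chain structure it is meant to establish). The paper closes this gap by a moment-map degree count: by Lemma \ref{equalized_action}(1) and continuity of the moment function attached to the linearization $\mu_\Lb$, the components mapping to orbit closures already carry total $f^*\Lb$-degree at least $\mu_\Lb(Y_0)-\mu_\Lb(Y_\infty)=\Lb\cdot\beta$, and multiple covers only increase it; since the total degree is exactly $\Lb\cdot\beta$, all covers are birational and every remaining component is contracted, and then stability (there are no marked points) forbids contracted trees. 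Without this count, your claims of ``no contracted or multiply-covered components'', hence automorphism-freeness, hence the passage from orbifold to variety, are unsupported.

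A secondary problem is your definition $\Mo:=\overline{U}$. With that choice, smoothness of $\Mo$ does \emph{not} follow from smoothness of the fixed locus $F$ unless you know $U$ is open in $F$: a proper closed irreducible subvariety of a smooth variety can be singular. Your one-line justification --- that every $\CC^*$-invariant first-order deformation of an embedded orbit closure is again a nearby orbit --- is precisely the statement that would need proof, so as written your smoothness argument is conditional on it. The paper sidesteps this by \emph{defining} $\Mo$ as the component of the fixed locus containing the parametrization of a general orbit: Lemma \ref{maps_in_Mo} then describes all of its points, automorphism-freeness places $\Mo$ inside the locus where $\Mbar_{0,0}(X,\beta)$ is a smooth variety (Theorem \ref{thm:konts}(2)), and Iversen's theorem for varieties (rather than the stacky version your route requires) gives smoothness of $\Mo$ outright; the open part consisting of orbit closures from $Y_0$ to $Y_\infty$ is then automatic, since stable maps with irreducible domain form an open subset.
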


It is worthwile to note that, in fact, the variety $\Mo$ maps to  the Chow quotient for the action $\alpha$ which admits birational morphisms to GIT quotients for this action including $\PP(N_{Y_0/X})$ and $\PP(N_{Y_\infty/X})$, see \cite{Kapranov}. The above theorem assures its smoothness.

 We will realize the moduli space $\Mo$ as a component of the locus of fixed points of a $\CC^*$-action on the Kontsevich moduli space of stable maps to $X$ coming from $\alpha$.  Namely, the $\CC^*$-action $\alpha$ on $X$ lifts up to a left action on the space of stable maps to $X$. If $f: C\rightarrow X$ is a stable map then $(t\cdot f)(p)=t\cdot(f(p))$, for $p\in C$. In particular we have the action on the component $\Mbar_{0,0}(X,\beta)$ $$\widetilde{\alpha}:\CC^*\times\Mbar_{0,0}(X,\beta)\longrightarrow\Mbar_{0,0}(X,\beta)$$

Let $f:C\to X$ be a stable map to $X$, and let  $C_1,\ldots,C_r$ be irreducible components of $C$. Suppose that the class of the map $f$ in $\Mbar_{0,0}$ is fixed by the action of $\widetilde{\alpha}$. Then the image of every $C_i$ is either contained in the fixed point locus of $\alpha$ or its image is the closure of a $\CC^*$-orbit in $X$. In particular, a parametrization of the closure a general orbit of $\alpha$ is a fixed point of the action $\widetilde{\alpha}$.

Theorem~\ref{thm:orbitsmoduli} and Lemmata \ref{action_on_LG} and \ref{action=>inversion_of_matrices} allow us to make the following definition, which, together with Corollary~\ref{cor:gaussmoduli}, are crutial to our paper.

\begin{defi}
Let $\beta \in H^2(\LG, \ZZ)$ be the class of the closure of a generic $\CC^*$-orbit. We define the space  $\GM$ to be the connected component of the $\CC^*$-fixed locus of the Kontsevich moduli space $\Mbar_{0,0}(\LG,\beta)$ which contains a generic orbit.
\end{defi}

Although, the above definition of the variety of complete quadrics might sound very abstract, in Remark~\ref{rem:pointsofGM} we give a very concrete description of points of $\GM$.

\begin{rem}
There are several equivalent definitions of the $\GM$ already present in the classical literature \cite{LaksovCompleteQuadrics}. From the theoretical point of view one of the simplest is as the consecutive blow-up of (strict transforms) of loci of rank $i$ matrices for $i=1,\dots,n-2$. While this definition is very elegant, it is nontrivial to give an easy, explicit description of such strict transforms. We circumvent this problem by realizing $\GM$ as a moduli space. Further, by applying the group actions, we extract from it all the combinatorial data that we need.
\end{rem}

\begin{cor}\label{cor:gaussmoduli}
The variety $\GM$ is a smooth projective variety, which contains an open part parametrizing general $\CC^*$-orbits in $\LG$. It comes with two natural morphisms $\pi_0:\GM\to  \PP(S^2V)$ and $\pi_\infty:\GM\to  \PP(S^2V^*)$ which resolve the inversion map $\Phi: \PP(S^2V)\dashrightarrow \PP(S^2V^*)$. Moreover, $\GM$ admits an $SL(V)$-action which makes the above morphisms $SL(V)$ equivariant.
\end{cor}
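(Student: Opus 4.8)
The plan is to deduce Corollary \ref{cor:gaussmoduli} by specializing Theorem \ref{thm:orbitsmoduli} to the pair $(X,\alpha)=(\LG,\widehat{\delta})$ and then reading off all the geometric identifications from Lemmata \ref{action_on_LG} and \ref{action=>inversion_of_matrices}. First I would check that the hypotheses of Theorem \ref{thm:orbitsmoduli} hold. The Lagrangian Grassmannian is the rational homogeneous space $\LG=Sp(n)/P$, hence a smooth projective convex variety. By Lemma \ref{action_on_LG}(3) the action $\widehat{\delta}$ is equalized, and it is effective because its weights on tangent spaces at the fixed points are $\pm 1$ or $0$ and not all zero, so the generic stabilizer is trivial. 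Taking $\beta$ to be the class of the closure of a generic orbit, Theorem \ref{thm:orbitsmoduli} then produces a smooth projective variety $\Mo\subset\Mbar_{0,0}(\LG,\beta)$ together with two morphisms $\pi_0,\pi_\infty$ resolving the induced birational map $\Phi_{\widehat{\delta}}$.

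Next I would identify $\Mo$ with $\GM$. By the construction underlying Theorem \ref{thm:orbitsmoduli}, $\Mo$ is the connected component of the fixed locus of the lifted $\CC^*$-action on $\Mbar_{0,0}(\LG,\beta)$ that contains the class of a generic orbit (such a class is fixed, as noted after the theorem). This is precisely the defining property of $\GM$, so $\Mo=\GM$, giving smoothness, projectivity, and the open part parametrizing generic orbits. The targets of the resolving morphisms are $\PP(N_{y_0/\LG})$ and $\PP(N_{y_\infty/\LG})$, where $y_0$ and $y_\infty$ are the source and sink of $\widehat{\delta}$ from Lemma \ref{action_on_LG}. Since these are isolated fixed points, their normal bundles in $\LG$ are the tangent spaces, and Lemma \ref{action=>inversion_of_matrices} supplies canonical identifications $\PP(T_{y_0})\cong\PP(S^2V)$ and $\PP(T_{y_\infty})\cong\PP(S^2V^*)$ under which $\Phi_{\widehat{\delta}}$ becomes the inversion Cremona map $\Phi$. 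Hence $\pi_0:\GM\to\PP(S^2V)$ and $\pi_\infty:\GM\to\PP(S^2V^*)$ resolve $\Phi$, as claimed.

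Finally, for the $SL(V)$-equivariance I would invoke the fact, recorded in Lemma \ref{action_on_LG}, that the $SL(V)$-action on $\LG$ commutes with $\widehat{\delta}$. Functoriality of the Kontsevich construction lifts this action to $\Mbar_{0,0}(\LG,\beta)$ (the class $\beta$ is preserved because $SL(V)$ is connected and acts trivially on $\HH_2$). Since $SL(V)$ commutes with the lifted $\CC^*$-action, it preserves the whole fixed locus; being connected, it acts trivially on the finite set of connected components and therefore preserves the component $\GM$. The morphisms $\pi_0,\pi_\infty$ are then $SL(V)$-equivariant because the identifications $T_{y_0}\cong S^2V$ and $T_{y_\infty}\cong S^2V^*$ are the standard and dual $SL(V)$-representations and the construction of the two resolving maps is natural in the action. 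I expect the only step requiring genuine care is this last one, namely verifying that the group action lifts compatibly through the moduli functor and descends to the distinguished connected component; the rest is a direct substitution of the data of Lemmata \ref{action_on_LG} and \ref{action=>inversion_of_matrices} into Theorem \ref{thm:orbitsmoduli}.
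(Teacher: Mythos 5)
Your proposal is correct and follows essentially the same route as the paper: the paper presents this corollary as an immediate consequence of specializing Theorem \ref{thm:orbitsmoduli} to $(X,\alpha)=(\LG,\widehat{\delta})$, with Lemma \ref{action_on_LG} supplying the hypotheses (convexity of the homogeneous space, equalized action, identification of source/sink and fixed components) and Lemma \ref{action=>inversion_of_matrices} identifying $\PP(T_{y_0})\cong\PP(S^2V)$, $\PP(T_{y_\infty})\cong\PP(S^2V^*)$ and $\Phi_{\widehat{\delta}}$ with matrix inversion. Your additional care about effectiveness, the identification of $\Mo$ with the distinguished fixed-point component defining $\GM$, and the lifting of the commuting $SL(V)$-action to the Kontsevich moduli space matches what the paper leaves implicit.
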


We note that examples of varieties $X$ with a $\CC^*$-action as in Theorem~\ref{thm:orbitsmoduli} include $\Grass(n,2n)$ and orthogonal Grassmanian $OG(n,2n)$. Hence one has a smooth compactification of space of $\CC^*$-orbits of an equivalent of the action $\widehat{\delta}$ in these cases as well; applications of this observation will be used in forthcoming papers.

The rest of this subsection will be devoted to the proof of  Theorem~\ref{thm:orbitsmoduli}.

We fix an ample line bundle $\Lb$ on $X$ and its linearization $\mu_\Lb$. By Lemma \ref{equalized_action} the product of $\beta$ with the first Chern class of $\Lb$ is $\mu_\Lb(Y_0)-\mu_\Lb(Y_\infty)$.

\begin{lemma}\label{maps_in_Mo}
Let $\Mo\subset\Mbar_{0,0}(X,\beta)$ be an irreducible component of the fixed point locus of $\widetilde{\alpha}$ which contains a parametrization of a general orbit of $\alpha$. Suppose that $f: C\rightarrow X$ is a map in $\Mo$ with $C_i$, $i=1,\dots,r$, irreducible components of $C$. Then the following holds:
\begin{enumerate}[leftmargin=*]
\item $f(C)\cap Y_0\ne\emptyset\ne f(C)\cap Y_\infty$,
\item for $i=1,\dots,r$ the morphism $f_{|C_i}: C_i\rightarrow X$ is a parametrization of the closure of an orbit of $\alpha$,
\item after possibly renumbering $C_i$'s we have $f(C_1)\cap Y_0\ne\emptyset$, $f(C_r)\cap Y_\infty\ne\emptyset$ and for $i=1,\dots, r-1$ the intersection $f(C_i)\cap f(C_{i+1})$ consists of one point contained in a fixed point component $Y_i$ of the action $\alpha$,
\item the linearization function satisfies the following inequalities $$\mu_\Lb(Y_0)>\mu_\Lb(Y_1)>\cdots>\mu_\Lb(Y_{r-1})>\mu_\Lb(Y_\infty).$$
\end{enumerate}
\end{lemma}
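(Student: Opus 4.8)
The plan is to read off the combinatorial type of an $\widetilde{\alpha}$-fixed stable map from a single height function built out of the linearization $\mu_\Lb$, and then to pin that type down by comparing the $\Lb$-degree of the map with the value $\Lb\cdot\beta=\mu_\Lb(Y_0)-\mu_\Lb(Y_\infty)$ recorded before the lemma. I would first dispose of $(1)$ by a properness argument that is independent of the rest. Since $\Mo$ is closed in the proper space $\Mbar_{0,0}(X,\beta)$, the universal curve $p\colon\mathcal C\to\Mo$ and its evaluation $\mathrm{ev}\colon\mathcal C\to X$ are proper, so $p(\mathrm{ev}^{-1}(Y_0))$ is closed in $\Mo$. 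Every parametrization of a general orbit meets $Y_0$ (its source lies there), and these form a dense open subset of the irreducible variety $\Mo$; hence $p(\mathrm{ev}^{-1}(Y_0))=\Mo$, i.e.\ $f(C)\cap Y_0\neq\emptyset$ for every $f\in\Mo$, and the same argument with $Y_\infty$ finishes $(1)$.

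Next I would record the local picture. By the discussion preceding the lemma each $C_i$ is either contracted to a fixed point or maps onto the closure of an orbit; moreover every node of $C$ maps into the fixed locus, because the reparametrizations realizing $[t\cdot f]=[f]$ fix the finite set of nodes, so the images of the nodes are $\CC^*$-fixed. On a non-contracted $C_i\cong\PP^1$ the induced action is nontrivial, hence has exactly two fixed points $s_i,t_i$, which $f$ sends to the source $A_i$ and the sink $B_i$ of the orbit closure $\overline{O_i}$; as $A_i,B_i$ are the only fixed points of $\overline{O_i}$, all nodes on $C_i$ sit at $s_i$ or $t_i$. By Lemma \ref{equalized_action}(1) each $\overline{O_i}$ is smooth with $\Lb\cdot\overline{O_i}=\mu_\Lb(A_i)-\mu_\Lb(B_i)\ge 1$. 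This lets me define a continuous function $H\colon C\to\RR$ equal to $\mu_\Lb$ on the fixed image of every contracted component and decreasing strictly and monotonically from $\mu_\Lb(A_i)$ to $\mu_\Lb(B_i)$ along each $C_i$; continuity across a node holds because the node maps to a single fixed component.

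The heart of the proof is a telescoping estimate along the unique simple path $\mathcal P$ in the tree $C$ joining a point $p_0$ with $f(p_0)\in Y_0$ to a point $p_\infty$ with $f(p_\infty)\in Y_\infty$ (both exist by $(1)$). Writing $N:=\mu_\Lb(Y_0)-\mu_\Lb(Y_\infty)=\Lb\cdot\beta$, the net change of $H$ along $\mathcal P$ is $-N$, so if $D$ and $U$ denote the sums of $\mu_\Lb(A_i)-\mu_\Lb(B_i)$ over the descending and ascending non-contracted components of $\mathcal P$, then $N=D-U$; on the other hand $N=\Lb\cdot\beta=\sum_i d_i\bigl(\mu_\Lb(A_i)-\mu_\Lb(B_i)\bigr)\ge D+U$, where $d_i\ge 1$ is the degree of $f|_{C_i}$. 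Combining $N=D-U$ with $N\ge D+U$ forces $U=0$, forces every non-contracted component to lie on $\mathcal P$, and forces every $d_i=1$; this gives $(2)$, since each $f|_{C_i}$ is then an isomorphism onto the smooth curve $\overline{O_i}$. A contracted component would have to be a leaf of $C$ (the subtrees hanging off $\mathcal P$ consist only of contracted curves, and pruning exhibits such a leaf) or an interior vertex of $\mathcal P$; in either case it carries at most two nodes, contradicting stability in $\Mbar_{0,0}$. Hence $C=\mathcal P$ is an $H$-monotone chain of orbit closures, and reading off its consecutive nodes yields the incidences at the fixed components $Y_i$ in $(3)$ and the strict inequalities in $(4)$.

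I expect the main obstacle to be precisely the exclusion of \emph{zig-zags} and \emph{branchings}: a priori a fixed stable map could turn and run uphill after reaching an intermediate fixed component, or split into several orbit closures meeting at one point. The sharpness of the inequalities $N=D-U\le D+U\le N$, which simultaneously forces $U=0$, all $d_i=1$, and all non-contracted components onto $\mathcal P$, is exactly what rules this out, and the stability count in $\Mbar_{0,0}$ then removes any remaining contracted curves.
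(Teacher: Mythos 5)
Your proposal is correct and takes essentially the same route as the paper: point (1) via properness of evaluation plus density of general-orbit parametrizations, and points (2)--(4) by combining monotonicity of the moment function induced by $\mu_\Lb$ along $f(C)$ with the degree identity $\Lb\cdot\beta=\mu_\Lb(Y_0)-\mu_\Lb(Y_\infty)$ from Lemma \ref{equalized_action}, followed by stability to kill contracted components; your telescoping estimate $N=D-U\le D+U\le N$ just makes explicit what the paper compresses into ``because of continuity of the moment map.'' One small caution: the discussion preceding the lemma only says a fixed component of $C$ has image \emph{contained in} the fixed locus (not that it is contracted), so such a component could a priori map non-constantly into some $Y_j$; this does not break your argument, since such a component contributes a strictly positive amount to $\Lb\cdot\beta$ while contributing nothing to the change of $H$, so your inequality forces it to be contracted, exactly as the paper concludes before invoking stability.
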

\begin{proof}
First, let us note that the evaluation map for $\Mo\subset\Mbar_{0,0}(X,\beta)$ dominates $X$ and over a general point of $X$ we have the map $f\in\Mo$ which parametrizes the closure of a general orbit hence its image meets both $Y_0$ and $Y_\infty$. Thus the first statement is true for a general $f\in\Mo$ hence for every $f\in\Mo$. The remaining three points follow from Lemma \ref{equalized_action}. Indeed, we write $C=C'\cup C''$, where $f$ maps components of $C'=C_1'\cup\cdots\cup C_s'$ to orbits and components of $C''$ to fixed point components of $\alpha$. The linearization $\mu_\Lb$ yields a moment function of $f^*\Lb$ on $C$ with maximum at $\mu_\Lb(Y_0)$ and minimum at $\mu_\Lb(Y_\infty)$ and every $C_i'$ mapped to an interval bounded by the value of $\mu_\Lb$ on the source/sink of the respective orbit. Thus, because of continuity of the moment map , in view of \ref{equalized_action}, $\sum_i \deg_{C_i'} f^*\Lb\geq \mu_\Lb(Y_0)-\mu_\Lb(Y_\infty)$ and since the class of $f(C)$ is $\beta$ we conclude that this is in fact equality and components of $C''$ are mapped to points. Therefore the components $C_i'$ satisfy conditions postulated for $C_i$'s above. Finally, since $f$ is stable it cannot map a tree of (unmarked) rational curves to a point hence there are no components in $C''$.
\end{proof}
Now it remains to prove that $\Mo$ defined in Lemma \ref{maps_in_Mo} satisfies properties postulated in Theorem \ref{thm:orbitsmoduli}.
\begin{proof}
First we note that maps in $\Mo$ admit no automorphisms and therefore by \ref{thm:konts} $\Mbar_{0,0}(X,\beta)$ is smooth along $\Mo$. Therefore, by \cite{Iversen}, $\Mo$ is smooth as fixed point component of the $\CC^*$-action on the smooth locus. Finally, the maps $\pi_0$ and $\pi_\infty$ are defined by tangents to curves $f(C)$ at points of their intersection with $Y_0$ and $Y_\infty$.
\end{proof}
\begin{rem}
The core of the above proof of Theorem \ref{thm:orbitsmoduli} is the following more general observation: Suppose $X$ is a convex manifold with an equalized action $\alpha: \CC^*\times X\rightarrow X$ and let $f: \PP^1\rightarrow C\subset X$ be a parametrization of the closure of an orbit with source in $Y_1$ and sink in $Y_2$, any two fixed components of the action $\alpha$. Then the fixed point component of the action $\widetilde{\alpha}$ on $\Mbar_{0,0}(X,[C])$ which contains the class of $f$ is smooth.
\end{rem}

\begin{rem}\label{rem:pointsofGM}
The points of $\GM$ are in one to one correspondence with chains of $\CC^*$-orbits $C_1,\ldots,C_r\subset \LG$ such that 
\begin{itemize}
\item the source of $C_1$ is the source of $\LG$, the sink of $C_r$ is the sink of $\LG$;
\item the source of $C_i$ is the sink of $C_{i-1}$ for $i=2,\ldots r$.
\end{itemize}
\end{rem}

\section{Geometry of Complete Quadrics}\label{sec:K-thGM}

\subsection{Equivariant $K$-theory}\label{sec:K-th}
In this subsection we give a very brief overview of the results on equivariant $K$-theory we are going to use later. We will use the push forward formula from Theorem~\ref{thm:loc} to provide an algorithm for computing the ML-degree in Subsection~\ref{sec:gaussint}. For a more detailed introduction to the topic we refer to \cite{MR2993138, ChrissGinzburg, feher2018motivic}.

Let $X$ be a smooth complete algebraic variety of dimension $m$ with an effective action of an algebraic torus $\TTT\simeq(\CC^*)^r$. Recall that  the equivariant $K$-theory of a point with trivial $\TTT$-action is the representation ring  of $\TTT$.  That is, $K_\TTT^0(\pt)=\ZZ[M(\TTT)]\simeq \ZZ[t_1^{\pm 1},\ldots,t_r^{\pm 1}]$, where $M(\TTT)=\Hom(\TTT,\CC^*)$ is the group of characters of $\TTT$. Assume now that the action of $\TTT$ on $X$ has only finitely many fixed points. For any fixed point $p\in X^\TTT$ there is the restriction map:
\[
i^*_p : K_\TTT^0(X) \to K_\TTT^0(p).
\]
The first result we will need is the Localization theorem which states that any element in $K_\TTT^0(X)$ is determined by its images in $K_\TTT^0(p)$ for $p\in X^\TTT$.
\begin{thm}[\cite{Thomason} Theorem 2.1]\label{thm:loc1}
Let $X$ be a smooth, projective algebraic variety with a $\TTT$-action such that $X^\TTT$ is finite. Then the map 
$$
\oplus i^*_p :K_\TTT^0(X) \to\bigoplus_{p\in X^\TTT}K_\TTT^0(p)\simeq \bigoplus_{p\in X^\TTT}\ZZ[t_1^{\pm 1},\ldots,t_r^{\pm 1}]
$$
is an injection of rings.
\end{thm}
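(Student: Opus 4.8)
The plan is to derive the stated injectivity from Thomason's concentration theorem together with the $K$-theoretic self-intersection formula, using that $K_\TTT^0(\pt)=\ZZ[t_1^{\pm 1},\dots,t_r^{\pm 1}]$ is an integral domain. Write $i\colon X^\TTT\hookrightarrow X$ for the inclusion of the reduced, finite fixed locus, so that $K_\TTT^0(X^\TTT)=\bigoplus_{p\in X^\TTT}K_\TTT^0(p)$ and $\oplus_p i_p^*=i^*$. Since each $i_p^*$ is a ring homomorphism, the displayed map is automatically a homomorphism of rings; the only real content is injectivity, which I treat as a statement about the kernel of $i^*$ viewed as a $K_\TTT^0(\pt)$-module map.

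First I would invoke the concentration theorem: there is a multiplicatively closed set $S\subset K_\TTT^0(\pt)$ for which the Gysin pushforward $i_*\colon S^{-1}K_\TTT^0(X^\TTT)\to S^{-1}K_\TTT^0(X)$ is an isomorphism. One may take $S$ to be generated by the $K$-theoretic Euler classes $e_p:=\sum_k(-1)^k[\Wedge{k}T_p^*X]=\prod_j\bigl(1-t^{-\chi_j(p)}\bigr)$, where the $\chi_j(p)$ are the weights of the isotropy representation on $T_pX$. Smoothness of $X$ together with finiteness of $X^\TTT$ forces every $\chi_j(p)$ to be nonzero, since a fixed tangent direction would make the fixed locus positive-dimensional at $p$; hence each $e_p$ is a nonzero element of the domain $K_\TTT^0(\pt)$. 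The self-intersection formula then identifies the composite $i^*i_*$ with the diagonal endomorphism of $\bigoplus_p K_\TTT^0(p)$ acting by multiplication by $e_p$ on the $p$-th summand, and these $e_p$ become units after inverting $S$.

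Granting these inputs, injectivity after localization is immediate: if $i^*(\xi)=0$, then writing $\xi=i_*(\eta)$ (possible because $i_*$ is a localized isomorphism) gives $0=i^*i_*(\eta)=(e_p)\cdot\eta$, whence $\eta=0$ and so $\xi=0$ in $S^{-1}K_\TTT^0(X)$. The remaining, and genuinely substantive, step is to descend from $S^{-1}K_\TTT^0(X)$ back to $K_\TTT^0(X)$, that is, to show the localization map $K_\TTT^0(X)\to S^{-1}K_\TTT^0(X)$ is itself injective. This is the part I expect to be the main obstacle, since a priori $K_\TTT^0(X)$ could carry $S$-torsion. The cleanest route is to show that $K_\TTT^0(X)$ is a \emph{free} module over $K_\TTT^0(\pt)$ of rank $|X^\TTT|$: the Bialynicki--Birula decomposition attached to a generic one-parameter subgroup of $\TTT$ stratifies $X$ into $\TTT$-invariant affine cells, one retracting equivariantly onto each fixed point, and the resulting localization long exact sequences in equivariant $K$-theory split, exhibiting $K_\TTT^0(X)$ as free. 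Since $K_\TTT^0(\pt)$ is a domain and the generators of $S$ are nonzero, a free, hence torsion-free, module embeds into its $S$-localization, which completes the argument.
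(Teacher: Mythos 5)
Your argument is sound, but note that the paper contains no proof of this statement to compare against: it is imported wholesale, with the citation of Thomason's Theorem 2.1 (which is the concentration theorem) standing in for a proof. What you have written is essentially the standard derivation of the GKM-type injectivity statement from that theorem, and the chain of reasoning is correct: concentration makes the localized $i_*$ an isomorphism; the self-intersection formula turns $i^*i_*$ into the diagonal operator with entries the Euler classes $e_p=\prod_j(1-t^{-\chi_j(p)})$; smoothness plus finiteness of $X^\TTT$ forces every $\chi_j(p)\neq 0$, so each $e_p$ is nonzero in the domain $K_\TTT^0(\pt)$ and becomes a unit after localization; and torsion-freeness of $K_\TTT^0(X)$ lets you descend injectivity from the localized level. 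Three details should be firmed up. (i) Thomason's theorem inverts the multiplicative set generated by $1-t^{\chi}$ over all nontrivial characters $\chi$ (equivalently, one localizes at suitable primes of $K_\TTT^0(\pt)$); it is not stated for the set generated by your finitely many Euler classes, and concentration is not obviously true for that smaller set. Take the larger $S$; since $1-t^{-\chi}=-t^{-\chi}(1-t^{\chi})$, the $e_p$ remain units there and nothing else in your argument changes. (ii) The claim that the Bialynicki--Birula localization sequences ``split'' is precisely the cellular fibration lemma (Lemma 5.5.1 of \cite{ChrissGinzburg}, a reference the paper already uses); it requires the BB decomposition of a projective variety to be filterable and an argument that the connecting homomorphisms vanish, so this should be cited rather than asserted --- it is the one place where a bare assertion hides genuine content, and also the only place where projectivity really enters. (iii) Pushforwards such as $i_*$ live naturally in $G$-theory, so you are implicitly invoking the identification $K_\TTT^0(X)\simeq G_0^\TTT(X)$ for smooth $X$. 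With those references supplied your proof is complete, and it delivers something the paper's citation glosses over: Thomason's theorem literally gives an isomorphism only after localization, and the passage to injectivity over the non-localized ring is exactly your freeness step.
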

Theorem \ref{thm:loc1} states that in the situation as above any class in equivariant $K$-theory of $X$ can be uniquely represented by a collection of Laurent polynomials with integer coefficients (one for each of the fixed points of $\TTT$).

The second general statement we will need is the formula for the intersection index of classes in $K$-theory. This is done by the push forward formula. For a fixed point $p\in X^\TTT$ there is a decomposition of the cotangent space at $p$ into irreducible representations of $\TTT$:
$$
T^*_p X = \bigoplus_{i=1}^{m} \CC_{\Chi_{p,i}}
$$
We will call the collection of characters $\Chi_{p,i}$ \emph{the compass} of the action of $\TTT$ at $p$.
\begin{thm}[\cite{ChrissGinzburg} Theorem 5.11.7]\label{thm:loc}
Let $X, \TTT$ be as before and let also $f:X\to \pt$ be the trivial map to the point pt with the trivial $\TTT$-action. Finally, let $\mathcal{F}\in K_\TTT^0(X)$ be a class represented by a collection of Laurent polynomials $g_p$ with $p\in X^\TTT$. Then the pushforward $f_*(\mathcal{F})\in K_\TTT^0(\pt)$ can be computed as
\begin{equation}\label{eq:push}
f_*(\mathcal{F}) = \sum_{p\in X^\TTT}\frac{g_p}{(1-\Chi_{p,1})\ldots (1-\Chi_{p,m})}.
\end{equation}
In particular, the rational function on the right hand side of the equation (\ref{eq:push}) is a Laurent polynomial.
\end{thm}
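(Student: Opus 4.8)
The plan is to assemble the formula from three ingredients: the full localization isomorphism refining Theorem~\ref{thm:loc1}, the equivariant self-intersection formula for the inclusion of a fixed point, and the functoriality of the proper pushforward in equivariant $K$-theory. Throughout I write $R=K_\TTT^0(\pt)=\ZZ[M(\TTT)]$ and let $S\subset R$ be the multiplicative system generated by the elements $1-\chi$ with $\chi\in M(\TTT)\setminus\{0\}$. First I would invoke the full localization theorem (the sharpening of Theorem~\ref{thm:loc1}): after inverting $S$, the ring homomorphism $\bigoplus_p i_p^*\colon S^{-1}K_\TTT^0(X)\to\bigoplus_{p\in X^\TTT}S^{-1}R$ is not merely injective but an isomorphism.

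For each fixed point $p$ let $i_p\colon\{p\}\hookrightarrow X$ and consider the class $[\cO_p]=(i_p)_*1\in K_\TTT^0(X)$. The key computation is its restriction to every fixed point $q$:
\begin{equation*}
i_q^*(i_p)_*1=
\begin{cases}
\prod_{j=1}^{m}(1-\Chi_{p,j}), & q=p,\\
0, & q\neq p.
\end{cases}
\end{equation*}
The off-diagonal vanishing is base change over disjoint supports, while the diagonal value is the self-intersection formula $i_p^*(i_p)_*1=\lambda_{-1}(N_{p/X}^{\vee})$ for $N_{p/X}=T_pX$, evaluated by multiplicativity from $\lambda_{-1}(\CC_\chi)=1-\chi$; here $\lambda_{-1}(E)=\sum_j(-1)^j[\Wedge{j}E]$ and $T_p^*X=\bigoplus_j\CC_{\Chi_{p,j}}$. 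Each factor $1-\Chi_{p,j}$ lies in $S$, so the normalized classes $\eta_p:=\big(\prod_j(1-\Chi_{p,j})\big)^{-1}(i_p)_*1$ satisfy $i_q^*\eta_p=\delta_{pq}$ in $S^{-1}R$; by the localization isomorphism they are precisely the basis dual to the restriction maps.

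It then follows that a class $\mathcal{F}$ with $i_p^*\mathcal{F}=g_p$ agrees with $\sum_p g_p\,\eta_p$ in $S^{-1}K_\TTT^0(X)$, since both have the same restriction to every fixed point. Applying the ($S^{-1}R$-linear) pushforward $f_*$ and using functoriality with $f\circ i_p=\id_{\pt}$, whence $f_*(i_p)_*1=1$, gives
\begin{equation*}
f_*\mathcal{F}=\sum_{p\in X^\TTT}\frac{g_p}{\prod_{j=1}^{m}(1-\Chi_{p,j})},
\end{equation*}
which is~(\ref{eq:push}). The integrality assertion is then automatic: because $X$ is proper and $f$ is an honest morphism, $f_*$ is defined already on $K_\TTT^0(X)$ with values in $R$, so the a priori merely rational right-hand side in fact lies in $R$, i.e.\ is a Laurent polynomial.

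The main obstacle is establishing the two geometric pillars: the self-intersection formula $i_p^*(i_p)_*1=\lambda_{-1}(T_p^*X)$, which uses smoothness of $X$ at $p$ and is the step that introduces the compass characters $\Chi_{p,j}$ and hence the denominators, and the surjectivity half of the localized restriction isomorphism. Both are standard but genuinely require the smooth, proper, finite-fixed-point hypotheses on $X$. Since the statement is quoted as \cite[Theorem~5.11.7]{ChrissGinzburg}, in the paper I would simply cite that reference, with the sketch above recording how the formula is built from the localization theorem already recalled in Theorem~\ref{thm:loc1}.
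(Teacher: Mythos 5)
Your proposal is correct: it is the standard localization argument (Koszul self-intersection formula at each fixed point, off-diagonal vanishing, injectivity of the localized restriction map, and functoriality of the proper pushforward), and it agrees with the paper, which itself gives no proof but simply quotes the result from \cite{ChrissGinzburg}, exactly as you propose to do. The only refinement worth noting is that surjectivity of the localized restriction map is not actually needed—injectivity (Theorem~\ref{thm:loc1}, preserved since localization is exact) already forces $\mathcal{F}=\sum_p g_p\,\eta_p$ in the localized ring.
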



\subsection{$K$-theory of complete quadrics} In this subsection we use results of the previous subsection to understand equivariant $K$-theory of $\GM$ with respect to the action of the Cartan torus $\TTT\iso(\CC^*)^{n-1}\subset SL(V)$ acting on $\GM$. We note that because of symmetries it is convenient to use the torus in $SL(V)$ whose action on $\LG$ and $\GM$ is not faithful (i.e.~has nontrivial kernel). Thus, eventually, we will divide the acting torus by its finite subgroup acting trivially. Our aim is achieved in two main steps: describing the $\TTT$ invariant points in $\GM$ (Proposition \ref{prop:fixedpoints}) and describing the $\TTT$ invariant rational curves in $\GM$, together with the characters with which $\TTT$ acts on them (Proposition \ref{prop:orbits}).

We begin by discussing low-dimensional cases. As in the case of $\LG=\LG_n$ we write $\GM_n$ if we need to indicate the dimension of $V$. We use notation of Section \ref{sec:action-LG}

\begin{ex}\label{GMn=2}
The Lagrangian Grassmannian for $n=2$, the 3-dimensional quadric, was presented in Example \ref{LGn=2}. We use notation from that example. The Cartan torus in $SL(V)$ is just $\CC^*$ which acts on $e_1, e_2$ with weights $1, -1$, respectively, hence its action on coordinates of $W$ has weight $0$ on $x_{12},x_{34}, y$ while on $x_{14}$ and $x_{23}$ it has weight $-2$ and $2$, respectively. Consider the projection $$W\longrightarrow W/\left(\CC\cdot(e_1\wedge e_2)+\CC\cdot(f_1\wedge f_2)\right)$$
with the quotient having coordinates $(x_{14},y,x_{23})$. The projection is equivariant for both the action $\widehat{\delta}$ which descends to the trivial action on the quotient and for the action of $\CC^*\subset PGL(V)$ which descend to the action which in the above coordinates has weights $(1,0,-1)$. Thus conics passing through $e_1\wedge e_2$ and $f_1\wedge f_2$ which are cut from $LG_2$ by planes in $\PP(W)$ are closures of orbits of $\widehat{\delta}$. Thus for $n=2$ we have $\GM=\PP^2$ with a $\CC^*$-action of weights $(-1,0,1)$ with source and sink at $x_{14}=y=0$ and $x_{23}=y=0$ representing reducible conics and an inner fixed point $x_{14}=x_{23}=0$ representing an irreducible conic, cf.~Figure \ref{fig:n=2}.
\end{ex}

\begin{ex}\label{GMn=3}
The case of a special $\CC^*$-action on the Lagrangian Grassmanian $\LG_3$ was  considered in \cite[Sect.~6]{OSCRW} where its associated birational transformation $\Phi: \PP(S^2V)\dashrightarrow\PP(S^2V^*)$ was indentified as the classical quadro-quadric Cremona transformation of $\PP^5$ with the indeterminacy center at the second Veronese $\PP^2\hookrightarrow\PP^5$ which can be resolved by a single blow-up of this center. By the discussion in the proof of \cite[Prop.~6.4]{OSCRW} it follows that the blow-up dominating both sides of the Cremona transformation is the variety of complete quadrics $\GM_3$ in this case with $\pi_0$ and $\pi_\infty$ the blow-down morphisms.

If $H_i$ and $E_i$ are the pullback of the hyperplane section divisor in each of the $\PP^5$ and the exceptional divisor, respectively, then $H_2=2H_1-E_1$ and $H_1=2H_2-E_2$ hence $\Lb=H_1+H_2=3H_i-E_i$ is ample. As the action of $SL(3)$ on $\PP^5$'s are standard or its dual we can describe the values of the function $\mu_\Lb$ on the fixed points of the action of the Cartan torus $\TTT\subset SL(V)$ on $\GM_3$.

The diagram below presents images of fixed points of this action  as $\bullet$ in the lattice of characters of $\TTT$; by $\circ$ we denote roots and some weights. The two big triangles are associated to the action of $\TTT$ on $\PP^5$'s. The arrows at one of the vertices $\circ$ of a big triangle represent weights of the conormal of $\PP^2\subset\PP^5$ at one of its fixed points  which yield the fixed points of the blow-up. The arrows at two of the fixed points $\bullet$ present their compasses in $\GM_3$.

$$ 
\begin{xy}<20pt,0pt>:
(0,0)*={\cdot}="z",
(0,1)*={\circ}="", 
(0.866,0.5)*={\circ}="w1", (1.3,0.5)*={\overline{\omega}_1}, (1.1,0.5)*={},
(0.866,-0.5)*={\circ}="w2", (1.3,-0.5)*={\overline{\omega}_2},
(0,-1)*={\circ}="e2", 
(-0.866,-0.5)*={\circ}="", 
(-0.866,0.5)*={\circ}="", 
(0.866,1.5)*={\circ}="e1-e2", (1.5,1.2)*={\varepsilon_1-\varepsilon_2},
(1.732,0)*={\circ}="e1-e3", 
(0.866,-1.5)*={\circ}="e2-e3", (1.5,-1.8)*={\varepsilon_2-\varepsilon_3},
(-0.866,-1.5)*={\circ}="e2-e1",
(-1.732,0)*={\circ}="e3-e1", 
(-0.866,1.5)*={\circ}="e3-e2",
(5.196,3)*={\circ}="S1", (0,-6)*={\circ}="S2", (-5.196,3)*={\circ}="S3",
(5.9,3)*{6\cdot\overline{\omega}_1},
"S1";"S2" **@{.}, "S3";"S2" **@{.}, "S1";"S3" **@{.},
(5.9,-3)*{6\cdot\overline{\omega}_2},
(5.196,-3)*={\circ}="T1", (0,6)*={\circ}="T2", (-5.196,-3)*={\circ}="T3",
"T1";"T2" **@{.}, "T3";"T2" **@{.}, "T1";"T3" **@{.},
(1.732,3)*={\bullet}="p1",
(3.464,0)*={\bullet}="p2", 
(1.732,-3)*={\bullet}="p3",
(-1.732,-3)*={\bullet}="p4",
(-3.464,0)*={\bullet}="p5", 
(-1.732,3)*={\bullet}="p6",
(0,3)*={\bullet}="q1",
(2.598,1.5)*={\bullet}="q2",
(2.598,-1.5)*={\bullet}="q3",
(0,-3)*={\bullet}="q4",
(-2.598,-1.5)*={\bullet}="q5",
(-2.598,1.5)*={\bullet}="q6",
"S3";"p6" **@{-} *\dir{>}, "S3";"p5" **@{-} *\dir{>}, "S3";"q6" **@{-} *\dir{>},
"p1";"q1" **@{=} *\dir{>}, "p1";"q2" **@{=} *\dir{>},
"p1";"p6" **@{-} *\dir{>}, "p1";"p2" **@{-} *\dir{>},
"p1";"e1-e2" **@{-},
"q4";"p4" **@{-} *\dir{>}, "q4";"p3" **@{-} *\dir{>},
"q4";"e2-e3" **@{-} *\dir{>}, "q4";"e2-e1" **@{-} *\dir{>},
"q4";"z" **@{-} *\dir{>},
\end{xy}
$$
The reader is advised to compare this graph with Figure \ref{fig:n=3} which presents $\TTT$-fixed points and its 1-dimensional orbits in $\GM_3$ described in terms of $\TTT$ invariant flags on $V$ which we introduce below.
\end{ex}

We start by proving preliminary lemmas that allow us to better understand the $\CC^*$-action $\widehat{\delta}$ on $\LG$.

\begin{lemma}\label{lem:sourcesink}
Let $U\subset V\oplus V^*$ be a Lagranian subspace in $V\oplus V^*$ such that $\dim \pi_V(U)=m_1$ and $\dim \pi_{V^*}(U)=m_2$, where $\pi_V$ and $\pi_{V^*}$ denote the respective projections. If $[U]\in\LG=\LG_n$ is a point representing $U$ then the source of the orbit of $[U]$ with respect to the action $\widehat{\delta}$  is $[\pi_{V^*}(U)]\in Y_{m_2}=\Grass(m_2,V^*)$ and the sink is $[\pi_V(U)]\in Y_{n-m_1}=\Grass(m_1,V)$.
\end{lemma}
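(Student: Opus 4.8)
The plan is to compute the two limits $\lim_{t\to 0}\widehat{\delta}(t)[U]$ and $\lim_{t\to\infty}\widehat{\delta}(t)[U]$ explicitly in Pl\"ucker coordinates. Since $\widehat{\delta}$ is the descent of $\delta$ to $\PP(W)$ and $\delta(-1)$ acts trivially there, these limits are governed by $\delta(t)(v+\phi)=t\,v+t^{-1}\phi$ on $V\oplus V^*$, so nothing is lost by working with $\delta$. First I would set up the bookkeeping: writing $k_1=\dim(U\cap V)$ and $k_2=\dim(U\cap V^*)$, the identity $\pi_V(U)=\operatorname{Ann}_V(U\cap V^*)$ recorded at the start of \ref{sec:action-LG} and its analogue $\pi_{V^*}(U)=\operatorname{Ann}_{V^*}(U\cap V)$ give $m_1=n-k_2$ and $m_2=n-k_1$, while a second annihilator yields $U\cap V=\operatorname{Ann}_V(\pi_{V^*}(U))$ and $U\cap V^*=\operatorname{Ann}_{V^*}(\pi_V(U))$. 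These are precisely the relations that identify the Lagrangian attached to a point of $Y_i=\Grass(i,V^*)$ with its underlying subspace of $V^*$.

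Next I would choose a basis of $U$ adapted to the two intersections: a basis $a_1,\dots,a_{k_1}$ of $U\cap V$, a basis $b_1,\dots,b_{k_2}$ of $U\cap V^*$, and vectors $c_l=v_l+\phi_l$ (with $v_l\in V$, $\phi_l\in V^*$, $l=1,\dots,d$, $d=n-k_1-k_2$) completing them to a basis. Applying $\delta(t)$ to the corresponding Pl\"ucker vector and expanding the product over the $c_l$ gives
$$\delta(t)\Big(\bigwedge_i a_i\wedge\bigwedge_j b_j\wedge\bigwedge_l c_l\Big)=t^{\,k_1-k_2}\Big(\bigwedge_i a_i\Big)\wedge\Big(\bigwedge_j b_j\Big)\wedge\bigwedge_{l=1}^d\big(t\,v_l+t^{-1}\phi_l\big),$$
where a summand choosing $v_l$ on an index set of size $s$ carries the monomial $t^{\,k_1-k_2+2s-d}$. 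Hence as $t\to 0$ the unique dominant (minimal-exponent) summand is $s=0$, and as $t\to\infty$ it is $s=d$. Factoring out the extreme power of $t$, the source becomes $[a_1\wedge\cdots\wedge a_{k_1}\wedge b_1\wedge\cdots\wedge b_{k_2}\wedge\phi_1\wedge\cdots\wedge\phi_d]$ and the sink becomes $[a_1\wedge\cdots\wedge a_{k_1}\wedge v_1\wedge\cdots\wedge v_d\wedge b_1\wedge\cdots\wedge b_{k_2}]$.

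It then remains to identify these with the claimed fixed points. The source vector is nonzero because $b_1,\dots,b_{k_2},\phi_1,\dots,\phi_d$ are the $\pi_{V^*}$-images of the basis vectors $b_j,c_l$ and, since $\ker(\pi_{V^*}|_U)=U\cap V$, they form a basis of $\pi_{V^*}(U)$; thus the source is the Lagrangian $(U\cap V)\oplus\pi_{V^*}(U)=\operatorname{Ann}_V(\pi_{V^*}(U))\oplus\pi_{V^*}(U)$, which is exactly $[\pi_{V^*}(U)]\in Y_{m_2}=\Grass(m_2,V^*)$. Symmetrically, the sink is $\pi_V(U)\oplus(U\cap V^*)=\pi_V(U)\oplus\operatorname{Ann}_{V^*}(\pi_V(U))=[\pi_V(U)]\in Y_{n-m_1}=\Grass(m_1,V)$.

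I expect the only delicate points to be the non-vanishing of these leading Pl\"ucker terms — for which the kernel descriptions of $\pi_{V^*}|_U$ and $\pi_V|_U$, together with the dimension counts $\dim\pi_{V^*}(U)=k_2+d=m_2$ and $\dim\pi_V(U)=k_1+d=m_1$, are exactly what is needed — and the combinatorial matching of the $V^*$-dimension of the limiting Lagrangian to the correct component $Y_{m_2}$ (respectively $Y_{n-m_1}$) via the double-annihilator identities. The power-of-$t$ analysis itself is entirely routine.
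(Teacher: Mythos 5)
Your proof is correct and follows essentially the same route as the paper: the paper represents $U$ as an $n\times 2n$ matrix $[A\,\vert\,B]$ and applies the row-rescaling limit argument from Lemma \ref{action=>inversion_of_matrices}, which is exactly your adapted-basis computation written in matrix rather than Pl\"ucker form, with the limits again identified as $(U\cap V)\oplus\pi_{V^*}(U)$ and $\pi_V(U)\oplus(U\cap V^*)$. Your version just spells out the weight bookkeeping and the nonvanishing of the extreme terms that the paper leaves implicit.
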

\begin{proof}
As in the proof of Lemma \ref{action=>inversion_of_matrices} we represent $U$ as an $n\times 2n$ matrix $[A\ \vert \ B]$, where the rows of $A$ span $\pi_V(U)$ and the rows of $B$ span $\pi_{V^*}(U)$, and we apply the same arguments. We note that the linearization of $\widehat{\delta}$ which we use to index the components of the fixed point set is related to the standard linearization of $\delta$ by the formula $\mu_{\widehat{\delta}}=(\mu_\delta+n)/2$.
\end{proof}
As observed at the beginning of Section \ref{sec:action-LG} $\pi_{V^*}(U)\subset V^*$ is orthogonal in the sense of natural pairing to $U\cap V\subset V$. Therefore the orbit of a Lagrangian subspace $U$ determines a flag $U\cap V\subset \pi_V(U)$ in $V$ or a dual flag in $V^*$. This observation can be inverted.

\begin{lemma}\label{lem:localtoglobal}
Let us consider a flag $\mathcal{F}= (V_1\subset V_2)$ in $V)$ with $0\leq n_1= \dim V_1<n_2=\dim V_2\leq n$. Set $m=n_2-n_1$. Then there exists a natural $\CC^*$ equivariant closed embedding $\eta_{\mathcal{F}}: \LG_m\hookrightarrow \LG_n$ such that the restriction of the action $\widehat{\delta}$ on $\LG_n$ is the respective action on  $\LG_m$ and the image of the source in $\LG_m$ is the class $[V_1]\in Y_{n-n_1}=\Grass(n_1,V)$ while the image of the sink is $[V_2]\in Y_{n-n_2}=\Grass(n_2,V)$.

\end{lemma}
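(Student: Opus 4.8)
The plan is to realize $\eta_{\mathcal F}$ through symplectic reduction. To the flag $\mathcal F = (V_1\subset V_2)$ I would attach the subspace $I := V_1\oplus\mathrm{Ann}(V_2)\subset V\oplus V^*$, where $\mathrm{Ann}(V_2)=\{g\in V^*: g|_{V_2}=0\}$. Since $V$ and $V^*$ are Lagrangian and $g(v_1)=0$ for $v_1\in V_1\subset V_2$ and $g\in\mathrm{Ann}(V_2)$, the subspace $I$ is isotropic of dimension $n_1+(n-n_2)=n-m$. A direct computation with the pairing $\omega$ introduced in \ref{sec:action-LG} shows that its symplectic orthogonal is $I^\perp=V_2\oplus\mathrm{Ann}(V_1)$, of dimension $n+m$, so the reduction $I^\perp/I$ is symplectic of dimension $2m$. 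Both $I$ and $I^\perp$ are sums of $\delta$-eigenspaces (pieces of $V$ carry weight $+1$, pieces of $V^*$ weight $-1$), hence are $\delta$-invariant.

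Next I would identify the reduction as a standard symplectic space. The canonical isomorphisms $I^\perp/I\cong (V_2/V_1)\oplus(\mathrm{Ann}(V_1)/\mathrm{Ann}(V_2))$ and $\mathrm{Ann}(V_1)/\mathrm{Ann}(V_2)\cong(V_2/V_1)^*$, together with $\mathrm{Ann}(V_i)=(V/V_i)^*$, yield, upon setting $V':=V_2/V_1$, a symplectic isomorphism $I^\perp/I\cong V'\oplus(V')^*$ carrying the polarization into the two Lagrangian summands. Because $q:I^\perp\to I^\perp/I$ is $\delta$-equivariant and $V'$, $(V')^*$ inherit weights $+1$, $-1$ respectively, this identification intertwines $\delta$ with the action of \ref{sec:action-LG} on $\LG_m=\LG(V')$; hence $\widehat\delta$ on $\LG(I^\perp/I)$ corresponds to $\widehat\delta$ on $\LG_m$.

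I would then define $\eta_{\mathcal F}$ by the reduction correspondence: send a Lagrangian $\bar U\subset I^\perp/I$ to its preimage $U:=q^{-1}(\bar U)\subset I^\perp$. Standard symplectic linear algebra shows that $U$ is Lagrangian in $V\oplus V^*$ with $I\subseteq U\subseteq I^\perp$, that $\bar U\mapsto U$ is a bijection onto the set of such Lagrangians, and that — being the restriction to $\LG$ of the pullback embedding $\Grass(m,I^\perp/I)\hookrightarrow\Grass(n,V\oplus V^*)$ — it is a closed embedding. Equivariance is immediate, as $I$, $I^\perp$ are $\delta$-invariant. Finally, by Lemma \ref{action_on_LG} applied to $\LG(V')$ the source is $[(V')^*]$ and the sink is $[V']$; tracing $q^{-1}$ gives $(V')^*\mapsto V_1\oplus\mathrm{Ann}(V_1)$ and $V'\mapsto V_2\oplus\mathrm{Ann}(V_2)$, both $\delta$-fixed, and Lemma \ref{lem:sourcesink} identifies these with $[V_1]\in Y_{n-n_1}=\Grass(n_1,V)$ and $[V_2]\in Y_{n-n_2}=\Grass(n_2,V)$ respectively.

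I expect the only real work to lie in the second and third steps: verifying that the canonical isomorphism $I^\perp/I\cong V'\oplus(V')^*$ is genuinely symplectic and $\CC^*$-equivariant with the stated weights (the sign bookkeeping of $\omega$ must be tracked), and confirming that $\bar U\mapsto q^{-1}(\bar U)$ is an algebraic closed embedding and not merely a set-theoretic bijection. Once these are in place the source/sink identification in the last step is routine, being a direct application of Lemma \ref{lem:sourcesink} to the two explicit fixed Lagrangians. The degenerate cases $n_1=0$ and $n_2=n$ are covered uniformly, with $\eta_{\mathcal F}$ reducing to the identity when $n_1=0$ and $n_2=n$.
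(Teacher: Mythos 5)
Your proof is correct, but it takes a genuinely different route from the paper's. The paper works in the ambient fundamental representations: it defines a linear map $\widetilde{\eta}_{\mathcal{F}}\colon W_m\to W_n$ on the torus eigenbasis by $\overline{e}_I\wedge f_{I'}\mapsto e_{\widetilde{I}}\wedge f_{\widetilde{I}'}$ with $\widetilde{I}=\{1,\dots,n_1\}\cup I$, checks that the $\delta$-weights shift by the constant $n_1+n_2-n$ (whence $\CC^*$-equivariance of the induced map of projective spaces), and then needs a separate argument --- via the classification of small invariant orbits, Corollary \ref{cor:small_orbits} --- to see that the linear embedding $\PP(W_m)\hookrightarrow\PP(W_n)$ actually carries $\LG_m$ into $\LG_n$. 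You instead build the embedding intrinsically by symplectic reduction along the isotropic subspace $I=V_1\oplus\mathrm{Ann}(V_2)$: the image is manifestly the closed locus $\{U\in\LG_n:\ I\subseteq U\}$ (a Lagrangian containing $I$ automatically lies in $I^\perp$, since $U=U^\perp\subseteq I^\perp$), the Lagrangian condition and closedness come for free from the standard reduction correspondence, and equivariance is immediate from the $\delta$-invariance of $I$ and $I^\perp$. In fact the two constructions produce the same map: in Pl\"ucker coordinates $\overline{U}\mapsto q^{-1}(\overline{U})$ is induced by wedging with a generator of $\bigwedge^{n-m}I$, which is exactly the paper's $\widetilde{\eta}_{\mathcal{F}}$ up to sign. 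What each approach buys: yours gives a cleaner, coordinate-free verification that the image lies in $\LG_n$ and that the map is a closed embedding, bypassing the small-orbit argument entirely; the paper's gives the explicit weight bookkeeping (the linearization shift $n_1+n_2-n$ and the images of the fixed points $e_I\wedge f_{I'}$), which is what is actually used downstream in Proposition \ref{prop:orbits} and in the recursive construction of the moment graph for the algorithms. Your source/sink identification, obtained by tracing $q^{-1}$ through the two distinguished fixed Lagrangians $V_1\oplus\mathrm{Ann}(V_1)$ and $V_2\oplus\mathrm{Ann}(V_2)$ and applying Lemma \ref{lem:sourcesink}, agrees with the paper's conclusion.
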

\begin{proof}
We take $e_1,\dots e_n$ a basis of $V$ such that $e_1,\dots, e_{n_1}$ is a basis of $V_1$ and $e_1,\dots e_{n_2}$ is a basis of $V_2$. Consider the quotient $Q=V_2/V_1$, with basis $\bar{e}_{n_1+1},\dots, \bar{e}_{n_2}$, with $\bar{e}_i$ denoting the class of $e_i$ in $V_2/V_1$, and its dual $Q^*\subset V^*_2$, the kernel of $V^*_2\rightarrow V^*_1$, with basis $f_{n_1+1},\dots,f_{n_2}$. Let $W_m\subset \bigwedge^m(Q\oplus Q^*)$ be the linear subspace with basis $\bar{e}_I\wedge f_{I'}$, where $I'$ is complement of $I$ in the set $n_1+1,\dots, n_2$, as in Section \ref{sec:action-LG}.
We define a linear map $\widetilde{\eta}_\mathcal{F}: W_m\rightarrow W_n\subset \bigwedge^n(V\oplus V^*)$ by setting $$\widetilde{\eta}_{\mathcal{F}}(\overline{e}_I\wedge f_{I'}) = e_{\widetilde{I}}\wedge f_{\widetilde{I}'}$$ where $\widetilde{I}=\{1,\dots, n_1\}\cup I$.

Recall that the action $\delta$ on $W_m$ takes weight $|I|-|I'|=2|I|-n_2+n_1$ at the vector $\overline{e}_I\wedge f_{I'}$ and $\delta$ on $W_n$ takes weight $2|I|+2n_1 -n$ on $e_{\widetilde{I}}\wedge f_{\widetilde{I}'}$.
These weights differ by the constant $n_1+n_2-n$ hence the induced embedding $\eta_\mathcal{F}:\PP(W_m)\hookrightarrow\PP(W_n)$ is $\CC^*$ equivariant. Moreover $\eta_\mathcal{F}$ maps the source of the action at $\P(W_m)$ to $[V_1]\in\LG_n$ and similarly the sink to $[V_2]\in\LG_n$. Now, to claim the embedding of $\LG_m$ in $\LG_n$ it remains to note that the orbits whose tangents generate the tangent of $\LG_m$ at the sink or source are mapped to orbits contained in $\LG_n$; this follows from Corollary \ref{cor:small_orbits}.
\end{proof}


In what follows $\TTT$-fixed points on $\GM$ will be presented as flags of $\TTT$ invariant subspaces in $V$ or in $V^*$. The key is the following observation.
\begin{prop}\label{prop:fixedpoints}
Let $f\in\GM$ be a stable map $f:C\rightarrow\LG$. Then intersection of $f(C)$ with the fixed point locus of $\widetilde{\delta}$ determines a partial flag in $V$ $$0=V_0\subset V_1\subset\cdots\subset V_{r-1}\subset V_r=V$$ with $\dim V_j= a_j$ and $[V_j]=f(C)\cap Y_{n-a_j}=f(C)\cap\Grass(a_j,V)$.

Moreover, if the stable map $f$ is $\TTT$ invariant then $a_{i+1}-a_i\leq 2$ and there exists an ascending chain of subsets $$\emptyset = I_0\subset I_1\subset \cdots \subset I_{r-1}\subset I_r=\{1,\dots,n\}=[n]$$ such that $V_j=\langle e_i: i\in I_j\rangle$, or equivalently, $f(C)\cap Y_{a_j}=[e_I\wedge f_{I'}]$. Conversely, given any ascending chain of subsets $I_i\subset [n]$, $i=0,\dots r$,  such that  $0<|I_{i+1}-I_i|\leq 2$ there exists a $\TTT$ invariant $f\in\GM$ such that $$f(C)\cap\GM^\TTT=\{[e_{I_i}\wedge f_{I_i'}]: i=0,\dots,r\}$$
\end{prop}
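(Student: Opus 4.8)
The plan is to combine the moduli description of points of $\GM$ from Remark~\ref{rem:pointsofGM} with the structure lemmas for the action $\widehat{\delta}$. Given $f:C\to\LG$ in $\GM$, Lemma~\ref{maps_in_Mo} decomposes $C=C_1\cup\cdots\cup C_r$ so that each $f_{|C_i}$ parametrizes the closure of a nontrivial $\widehat{\delta}$-orbit, the source of $C_1$ is $y_0$, the sink of $C_r$ is $y_\infty$, consecutive images meet in a single point $P_i=f(C_i)\cap f(C_{i+1})$ lying in a fixed component $Y_{b_i}$, and $\mu_\Lb$ strictly decreases along the chain. Since $Y_b=\Grass(b,V^*)=\Grass(n-b,V)$, setting $a_i=n-b_i$ (with $a_0=0$, $a_r=n$ at source and sink) identifies each $P_i$ with the class of a subspace $V_i\subset V$ of dimension $a_i$, and strict monotonicity of $\mu_\Lb$ gives $0=a_0<a_1<\cdots<a_r=n$. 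These $P_i$ are exactly $f(C)\cap\bigsqcup_j Y_j$, since the interior of each $C_i$ is a single orbit and misses the fixed locus; in particular $f(C)\cap Y_{n-a_j}=\{P_j\}$. To see that the $V_j$ form a flag, I would represent a general point of $C_i$ by a Lagrangian $U_i$ as in Lemma~\ref{action=>inversion_of_matrices}; then Lemma~\ref{lem:sourcesink} identifies the sink of $C_i$ with $[\pi_V(U_i)]$ and, via the orthogonality $\pi_{V^*}(U_i)=(U_i\cap V)^\perp$ recorded before Lemma~\ref{lem:localtoglobal}, its source with $U_i\cap V$ as a subspace of $V$. Because $U_i\cap V\subseteq\pi_V(U_i)$ always, we get $V_{i-1}\subset V_i$, and since $P_{i-1}$ is a single point of a fixed Grassmannian the descriptions of $V_{i-1}$ coming from $C_{i-1}$ and from $C_i$ coincide, so the $V_j$ assemble into the asserted flag. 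This settles the first assertion.

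For the $\TTT$-invariant case I would first note that, $\TTT$ being connected, it cannot permute the finitely many components of $f(C)$; hence each $f(C_i)$ is $\TTT$-invariant and so is each junction point $P_i$. Although $\LG^\TTT$ is in general positive-dimensional, each $P_i$ lies in $Y_{b_i}=\Grass(a_i,V)$, on which $\TTT$ acts with distinct weights on $e_1,\dots,e_n$, so $P_i$ is a coordinate subspace $V_i=\langle e_k:k\in I_i\rangle$ for a unique $I_i\subset[n]$, and the flag relation becomes $I_{i-1}\subset I_i$. Finally, each $f(C_i)$ is a $\TTT$-invariant $\widehat{\delta}$-orbit closure, so Corollary~\ref{cor:small_orbits} forces it to be a line (when $a_i-a_{i-1}=1$) or a conic (when $a_i-a_{i-1}=2$); its ``in particular'' clause gives $|I_i\symdif I_{i-1}|=\deg C_i=a_i-a_{i-1}$, whence $0<|I_i\setminus I_{i-1}|\le 2$.

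For the converse, given $\emptyset=I_0\subset\cdots\subset I_r=[n]$ with $0<|I_{i+1}\setminus I_i|\le 2$, I would apply the converse half of Corollary~\ref{cor:small_orbits} to each pair: as $|I_{i-1}\symdif I_i|\in\{1,2\}$ and $|I_{i-1}|\ne|I_i|$, there is a unique line or conic $C_i\subset\LG$, a $\widehat{\delta}$-orbit closure, joining $[e_{I_{i-1}}\wedge f_{I_{i-1}'}]$ and $[e_{I_i}\wedge f_{I_i'}]$. Since $\mu_\Lb([e_I\wedge f_{I'}])=n-|I|$, the source of $C_i$ is the endpoint with the smaller index set and the sink the larger, so the sink of $C_i$ is the source of $C_{i+1}$, the source of $C_1$ is $y_0=[e_{I_0}\wedge f_{I_0'}]$ and the sink of $C_r$ is $y_\infty=[e_{I_r}\wedge f_{I_r'}]$. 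By Remark~\ref{rem:pointsofGM} this chain is a point $f\in\GM$, and $f$ is $\TTT$-invariant because $\TTT$ commutes with $\widehat{\delta}$ and fixes both endpoints of each $C_i$, so by uniqueness it preserves $C_i$; the junction points then recover $\{[e_{I_i}\wedge f_{I_i'}]\}$.

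The main obstacle I expect is twofold and largely bookkeeping. The first is the nesting argument: one must keep careful track of the interplay between $\pi_V$, $\pi_{V^*}$, and the orthogonality relation, and verify that the subspace attached to a shared junction point is computed consistently from the two adjacent orbits. The second, more conceptual, point is that $\LG^\TTT$ is \emph{not} finite (already the single conic for $n=2$ lies pointwise in $\LG^\TTT$), so the identification of the distinguished fixed points with coordinate subspaces must be carried out inside the $\widehat{\delta}$-fixed Grassmannians $Y_{b_i}$ rather than in $\LG$ directly; phrasing the displayed equality in terms of $f(C)\cap\bigsqcup_j Y_j$ is what makes it correct.
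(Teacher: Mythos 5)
Your proof is correct and takes essentially the same route as the paper: the paper's own proof consists precisely of the two citations you expand, namely Lemma \ref{maps_in_Mo} (together with the orbit-to-flag observation preceding Lemma \ref{lem:localtoglobal}) for the first part, and Corollary \ref{cor:small_orbits} for the $\TTT$-invariant case and the converse. The details you supply---connectedness of $\TTT$ forcing each component and junction point to be invariant, $\TTT$-fixed points of the Grassmannians $Y_{b_i}$ being coordinate subspaces, and the uniqueness clause of Corollary \ref{cor:small_orbits} yielding $\TTT$-invariance in the converse---are exactly the steps the paper leaves to the reader.
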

\begin{proof}
The first part follows by Lemma \ref{maps_in_Mo} while the second statement follows by \ref{cor:small_orbits}.
\end{proof}
From now on, we will present a $\TTT$-fixed point $p\in\GM$ as a flag in $V$ or its dual in $V^*$  or an ascending sequence of subsets in $[n]$ such that the cardinality of any two consequtive subsets differs by 2 at most.

We have the following easy consequence of Proposition \ref{prop:fixedpoints}.
\begin{cor}\label{cor:number_of_fixptsGM}
If $\varkappa(n)$ is the cardinality of the torus-invariant points $\GM_n^\TTT$ then we have:
$\varkappa(n)=n\cdot\varkappa(n-1)+{{n}\choose 2}\cdot \varkappa(n-2).$
The explicit formula is given by:
$$\varkappa(n)=n!((1+\sqrt{3})^{n+1}-(1-\sqrt{3})^{n+1})/(2^{n+1}\sqrt{3})$$
\end{cor}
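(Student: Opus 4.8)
The plan is to turn the statement into a pure subset-counting problem and then solve the resulting recurrence by elementary means. By Proposition \ref{prop:fixedpoints} the $\TTT$-fixed points of $\GM_n$ are in bijection with ascending chains of subsets
$$\emptyset = I_0\subset I_1\subset\cdots\subset I_r=[n],\qquad 0<|I_{i+1}\setminus I_i|\leq 2,$$
so $\varkappa(n)$ is exactly the number of such chains, and everything reduces to counting them.

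First I would prove the recursion by conditioning on the last step $I_{r-1}\subset I_r=[n]$, whose complement $[n]\setminus I_{r-1}$ has either one or two elements. If it is a single element there are $n$ choices for it, and the truncated chain $\emptyset=I_0\subset\cdots\subset I_{r-1}$ is an admissible chain filling a fixed set of size $n-1$; since the number of admissible chains depends only on the size of the ground set (the constraint on the steps is insensitive to which elements are used), this contributes $n\cdot\varkappa(n-1)$. If the complement is a pair there are ${n\choose 2}$ choices and the truncated chain fills a set of size $n-2$, contributing ${n\choose 2}\cdot\varkappa(n-2)$. Summing the two cases yields $\varkappa(n)=n\cdot\varkappa(n-1)+{n\choose 2}\cdot\varkappa(n-2)$, with base values $\varkappa(0)=\varkappa(1)=1$; as a sanity check one gets $\varkappa(2)=3$ and $\varkappa(3)=12$, matching Examples \ref{GMn=2} and \ref{GMn=3}.

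To extract the closed form I would clear the binomial factors by the substitution $\varkappa(n)=n!\,a_n$. Using $\tfrac{n(n-1)}{2}(n-2)!=\tfrac{n!}{2}$, the recursion collapses to the constant-coefficient linear recurrence $a_n=a_{n-1}+\tfrac12 a_{n-2}$ with $a_0=a_1=1$. Its characteristic polynomial $x^2-x-\tfrac12$ has roots $\tfrac{1\pm\sqrt3}{2}$, and fitting the general solution $a_n=A\big(\tfrac{1+\sqrt3}{2}\big)^n+B\big(\tfrac{1-\sqrt3}{2}\big)^n$ to the initial data (using $\tfrac{1+\sqrt3}{2}+\tfrac{1-\sqrt3}{2}=1$ and $\tfrac{1+\sqrt3}{2}-\tfrac{1-\sqrt3}{2}=\sqrt3$) gives $a_n=\big((1+\sqrt3)^{n+1}-(1-\sqrt3)^{n+1}\big)/(2^{n+1}\sqrt3)$; multiplying back by $n!$ produces the asserted formula.

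There is no genuine obstacle here: the argument is combinatorial followed by a routine solution of a second-order linear recurrence. The only point that deserves a word of care is the symmetry step in the recursion, namely the claim that the number of admissible chains filling a fixed $k$-element subset equals $\varkappa(k)$. This is immediate because admissibility is a condition on the cardinalities $|I_{i+1}\setminus I_i|$ alone, so relabeling the ground set sets up a bijection with the chains filling $[k]$.
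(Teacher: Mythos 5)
Your proof is correct and follows exactly the route the paper intends: Corollary \ref{cor:number_of_fixptsGM} is stated as an immediate consequence of Proposition \ref{prop:fixedpoints}, i.e.~one counts ascending chains of subsets of $[n]$ with steps of size one or two, obtains the recursion by conditioning on the final step, and solves it via the substitution $\varkappa(n)=n!\,a_n$ and the characteristic roots $\tfrac{1\pm\sqrt{3}}{2}$. Your sanity checks ($\varkappa(2)=3$, $\varkappa(3)=12$) agree with Examples \ref{GMn=2} and \ref{ex:n=3}, and the justification that admissibility depends only on cardinalities is exactly the point needed to make the recursion legitimate.
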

The sequence $\varkappa(n)$ is the Euler characteristic of the variety of complete quadrics $X$. It appears in many different contexts, cf.~\cite{A080599}.


Our next aim is to describe weights of the action of $\TTT\subset SL(V)$ on the tangents to fixed point of its action on $\GM$. For this we will describe one dimensional $\TTT$-orbits in $\GM$. Roughly speaking, just as $\TTT$-fixed points corresponded to (reducible) curves in $\LG$, one dimensional $\TTT$-orbits in $\GM$ will determine $\TTT$-invariant surfaces in $\LG$.
Recall that $\TTT$ acts on $e_i\in V$ with weight $\varepsilon_i$. We note that in our convention $\sum \varepsilon_i=0$, as we have chosen $\TTT\subset SL(V)$. The weights of the $\TTT$ action on the tangent space will be used in Section \ref{sec:gaussint} to determine the contribution of each torus invariant point to the ML-degree.

\begin{prop}\label{prop:orbits}
Let $p\in \GM^\TTT$ be a point determined by the flag $$0=V_0\subsetneq V_1\subsetneq\dots\subsetneq V_k\subsetneq V_{k+1}=V^*$$ where $V_i=\langle f_j: j\in I_i\rangle$ for ascending chain of subsets $I_i\subset[n]$.
Then the action of $\TTT$ on the tangent space $T_p\GM$ has all characters distinct and they fall into one of the following types:
\begin{enumerate}
\item $\varepsilon_i-\varepsilon_j$ if there exists $s$ such that $i\in I_s$ and $j\not \in I_s$,
\item $2\varepsilon_i-2\varepsilon_j$ if there exists $s$ such that $I_{s+1}=I_s\cup\{i\}$, $I_{s+2}=I_{s+1}\cup\{j\}$,
\item $\varepsilon_i-\varepsilon_j$ and $\varepsilon_j-\varepsilon_i$ if there exists $s$ such that $I_{s+1}\setminus I_s=\{i,j\}$,
\item $2\varepsilon_k-\varepsilon_i-\varepsilon_j$ if there exists $s$ such that $I_{s+1}\setminus I_s=\{k\}$ and $I_{s+2}\setminus I_{s+1}=\{i,j\}$,
\item $\varepsilon_i+\varepsilon_j-2\varepsilon_k$ if there exists $s$ such that $I_{s+2}\setminus I_{s+1}=\{k\}$ and $I_{s+1}\setminus I_{s}=\{i,j\}$,
\item $\varepsilon_{i'}+\varepsilon_{j'}-\varepsilon_{i}-\varepsilon_{j}$ if there exists s such that $I_{s+2}\setminus I_{s+1}=\{i,j\}$ and $I_{s+1}\setminus I_s=\{i',j'\}$.
\end{enumerate}
In total we have ${{n+1}\choose {2}}-1$ distinct characters, which is the dimension of $\GM$.
\end{prop}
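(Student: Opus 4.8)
The plan is to compute $T_p\GM$ as a $\TTT$-representation by realizing it as the $\widehat\delta$-weight-zero part of the tangent space to the Kontsevich space at the stable map $p=[f]$, and then to split this space into pieces indexed by the components and the nodes of the chain underlying $p$. By Proposition \ref{prop:fixedpoints} the point $p$ is a $\TTT$-invariant stable map $f\colon C\to\LG$, $C=C_1\cup\dots\cup C_r$, whose components are closures of $\widehat\delta$-orbits joining consecutive coordinate fixed points $[e_{I_{s-1}}\wedge f_{I'_{s-1}}]$ and $[e_{I_s}\wedge f_{I'_s}]$; writing $d_s=|I_s\setminus I_{s-1}|\in\{1,2\}$, the curve $C_s$ is a line or a conic according to $d_s$ (Corollary \ref{cor:small_orbits}). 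Since $\GM$ is a smooth component of the $\widetilde\delta$-fixed locus (Corollary \ref{cor:gaussmoduli}, via \cite{Iversen}), $T_p\GM$ is exactly the part of $T_{[f]}\Mbar_{0,0}(\LG,\beta)$ on which $\widehat\delta$ acts trivially, with its residual $\TTT$-action. As $\LG$ is convex one has $H^1(C,f^*T_\LG)=0$ and unobstructedness (Theorem \ref{thm:konts}), so the normalization/deformation sequence for stable maps yields the $\TTT\times\CC^*$-equivariant identity in the representation ring
\[
[T_{[f]}\Mbar_{0,0}(\LG,\beta)]=\big[H^0(C,f^*T_\LG)\big]-\big[H^0(C,T_C)\big]+\sum_{q\ \mathrm{node}}\big[T_qC'\otimes T_qC''\big],
\]
where $C',C''$ are the two branches at a node $q$. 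The proof then extracts the $\widehat\delta$-weight-zero part of each summand.

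The node summands produce types (2), (4), (5), (6). A node $q$ maps to an intermediate fixed point $[e_{I_s}\wedge f_{I'_s}]$ and its branches are the neighbouring $C_s,C_{s+1}$; their tangent weights at $q$ are read off from the root description of $T_\LG$ in Section \ref{sec:action-LG}: a line adding an index $k$ contributes $\pm 2\varepsilon_k$, a conic adding a pair $\{i,j\}$ contributes $\pm(\varepsilon_i+\varepsilon_j)$, with sign fixed by whether the index lies in the $e$-part or the $f$-part at $q$. The smoothing weight is the sum of the two branch weights, and running through the four size-patterns $(1,1),(1,2),(2,1),(2,2)$ of the consecutive steps produces precisely $2\varepsilon_i-2\varepsilon_j$, $2\varepsilon_k-\varepsilon_i-\varepsilon_j$, $\varepsilon_i+\varepsilon_j-2\varepsilon_k$ and $\varepsilon_{i'}+\varepsilon_{j'}-\varepsilon_i-\varepsilon_j$. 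Because the chain is monotone for the moment map, each branch carries $\widehat\delta$-weight $+1$ incoming and $-1$ outgoing, so every smoothing weight is $\widehat\delta$-invariant and survives; there is exactly one such character per node.

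The remaining summand $H^0(C,f^*T_\LG)/H^0(C,T_C)$, in $\widehat\delta$-weight $0$, accounts for types (1) and (3). Here I would show that its weight-zero part splits as the tangent space of the partial flag variety attached to $\{V_s\}$, giving type (1) — the character $\varepsilon_i-\varepsilon_j$ appearing exactly when $i$ enters the flag strictly before $j$, i.e. when some $I_s$ contains $i$ but not $j$ — together with, for each step of size $2$, one copy of that step's internal deformations. For the latter I would invoke Lemma \ref{lem:localtoglobal}: the embedding $\eta_{\mathcal F}$ identifies a size-$2$ step with a copy of $\LG_2$, and the corresponding irreducible-conic component sits at the central $\TTT$-fixed point of $\GM_2=\PP^2$ (Example \ref{GMn=2}), whose two tangent directions carry the characters $\varepsilon_i-\varepsilon_j$ and $\varepsilon_j-\varepsilon_i$ of type (3). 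I expect this step to be the main obstacle: one must pin down the $\TTT\times\CC^*$-equivariant splitting type of $f^*T_\LG$ along each component, in particular along the conics, and control how the weight-zero sections glue across nodes and descend modulo the reparametrizations $H^0(C,T_C)$; the lines are immediate, but the conics and the gluing require the detailed local geometry of $\LG$.

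Finally I would close with a dimension count and a distinctness check. With $r_1,r_2$ the numbers of steps of size $1$ and $2$, the three contributions have sizes $\tfrac12(n^2-\sum_s d_s^2)$ for type (1), $2r_2$ for type (3) and $r-1$ for the node types, and a short computation with $n=r_1+2r_2$ gives the total $\binom{n+1}{2}-1=\dim\GM$. One then checks directly that the listed characters are pairwise distinct; since their number equals $\dim\GM$ and $\GM$ is smooth at $p$, they are exactly the characters of $T_p\GM$, which finishes the proof.
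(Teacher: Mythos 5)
Your overall strategy --- realizing $T_p\GM$ as the $\widehat\delta$-weight-zero part of $T_{[f]}\Mbar_{0,0}(\LG,\beta)$ and decomposing the latter by the normalization/deformation sequence for stable maps --- is sound and genuinely different from the paper's argument, and the parts you actually carry out are correct: the node-smoothing terms do yield exactly one character of type (2), (4), (5) or (6) per node, the identification of $T_p\GM$ with the weight-zero part is legitimate because maps in $\GM$ have no automorphisms, and your dimension count $\bigl(\binom{n}{2}-r_2\bigr)+2r_2+(r_1+r_2-1)=\binom{n+1}{2}-1$ is right and pleasantly non-inductive. However, there is a genuine gap exactly where you flag it: the claim that the weight-zero part of $H^0(C,f^*T_\LG)/H^0(C,T_C)$ consists precisely of the characters of types (1) and (3) is never proved. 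This is not a routine verification. The type-(1) characters correspond to weight-zero sections of $f^*T_\LG$ that do \emph{not} vanish at the nodes, i.e.\ deformations moving the entire chain (including the images of the nodes inside the intermediate fixed components) simultaneously; to produce them inside your framework you must determine the $\TTT\times\CC^*$-equivariant splitting of $f^*T_\LG$ on each line and each conic component, decide which weight-zero sections glue across all $r-1$ nodes, and then quotient by reparametrizations --- none of which is done. Likewise, your appeal to Lemma \ref{lem:localtoglobal} and Example \ref{GMn=2} for type (3) only identifies the characters of the tangent space to $\GM_2$ at its central fixed point; to import this you would still need that $\eta_{\mathcal{F}}$ induces a map of moduli spaces whose differential hits the corresponding weight-zero sections, which is again the deferred gluing analysis.

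It is worth seeing how the paper closes this same hole, because it sidesteps $f^*T_\LG$ entirely. The paper first counts the listed characters (by induction on $n$, splitting into four cases according to the first two steps of the chain), and then realizes each listed character as the character of an explicit one-dimensional $\TTT$-orbit in $\GM$ through $p$: for type (1) the unipotent family $A_\lambda(e_1)=e_1-\lambda e_2$ acting on the whole chain, and for types (2)--(6) explicit local models induced from $\GM_2$, $\GM_3$ and $\GM_4$ via Lemma \ref{lem:localtoglobal} (type (6) by a direct matrix computation). Since distinct characters give linearly independent eigenvectors and the count equals $\dim\GM$, smoothness of $\GM$ (Corollary \ref{cor:gaussmoduli}) forces this list to be all of $T_p\GM$. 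If you want to complete your write-up with minimal extra work, keep your node analysis and your count, and replace the problematic $H^0$ step by these explicit curves: the family $A_\lambda$ is precisely a curve whose tangent vector at $p$ is a weight-$(\varepsilon_i-\varepsilon_j)$ section of $f^*T_\LG$ nonvanishing at the nodes, and exhibiting it is far easier than computing the splitting type and the gluing.
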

\begin{proof}
We start by proving the last claim on the number of distinct characters by induction on $n$. For $n=2$ this is a direct check, see Example \ref{GMn=2}.
For $n>2$, without loss of generality we may assume that the ascending chain of subsets $I_i$ representing $p$ starts with the following terms; in brackets we describe the first two components (counting from the source $f_{[n]}$) of the reducible curve represented by $p$.
\begin{enumerate}[(A)]
\item $I_1=\{1\}$, $I_2=\{1,2\}$, (lines $+$ line);
\item $I_1=\{1,2\}$, $I_2=\{1,2,3\}$ (conic $+$ line);
\item $I_1=\{1\}$, $I_2=\{1,2,3\}$ (line $+$ conic);
\item $I_1=\{1,2\}$, $I_2=\{1,2,3,4\}$, when $n>3$ (conic $+$ conic).
\end{enumerate}
As the cases are similar we just present the proof for case (B). By inductive step for $n-2$ we have ${{n-1}\choose {2}}-1$ characters that do not involve $I_1$. Additionally we obtain the following contributions: $2(n-2)$ from case (1) of \ref{prop:orbits}, $2$ from case (3) and $1$ from case (5). All together we have ${{n-1}\choose{2}}-1+2(n-2)+2+1={{n+1}\choose2}-1$ characters.

We have to show that each of the given characters indeed appears as a weight of a tangent vector to $\GM$ at $p$. Our construction will be explicit: we give one dimensional $\TTT$-orbits in $\GM$, which closure contains $p$ and we show that $\TTT$ acts on a given orbit with a character predicted in (1)-(6). As we know that $\GM$ is smooth by Corollary \ref{cor:gaussmoduli} this is enough to completely characterize the $\TTT$-action on the tangent space $T_p \GM$.

The orbits that appear are of two types. One type corresponding to point $(1)$ is quite similar to one dimensional torus orbits in flag varieties. These orbits do not have to be induced from the case of smaller $n$.

Consider $p,i,j$ as in point $(1)$ and without loss of generality assume $i=2$ and $j=1$. For $\lambda\in \CC$ define the automorphism of $V$ by $A_\lambda(e_1)=e_1-\lambda e_2$ and $A_\lambda(e_j)=e_j$ otherwise. We naturally consider $A_\lambda$ as an automorphism of $V\oplus V^*$. By abuse of notation, $ A_\lambda$ is also an automorphism of $\LG$ and $\GM$. We have $A_0=Id$. We may also interpret $p$ as a reducible curve in $\LG$. In particular, for any $\lambda$ the curve $A_\lambda(p)\subset \LG$ is invariant with respect to the $\CC^*$-action and represents a point in $\GM$. Note that $\lim_{\lambda\rightarrow\infty} A_\lambda(p)$ is a $\TTT$ invariant point of $\GM$. Interpreting points of $\LG$ as $n\times 2n$ matrices we have the following diagram:

\[
\begin{tikzcd}
{[a_1\,\,a_2\,\dots \,b_1\,\, b_2\,\dots] }    \arrow[r,"A_\lambda"]  \arrow[d, "{(t_1,\dots,t_n)}"']       &         {[ a_1\,\,a_2-\lambda a_1\,\dots \,b_1+\lambda b_2 \,\,b_2\,\dots]} \arrow[d, "{(t_1,\dots,t_n)}"]
  \\
{[t_1a_1\,\,t_2a_2 \,\dots \, t_1^{-1}b_1\,\,t_2^{-1}b_2\,\dots]}      \arrow[r, "{A_{\frac{t_2}{t_1}\lambda}}"']    &                                 {[t_1a_1\,\,t_2a_2-\lambda t_2a_1\,\dots\,t_1^{-1}b_1+\lambda t_1^{-1}b_2\,\,t_2^{-1}b_2\,\dots]}
\end{tikzcd}
\]
Here, $a_i$ and $b_j$ for $1\leq i,j\leq n$ are columns of the $n\times 2n$ matrices, i.e.~vectors in $\CC^n$. The distinction is that the $a$ labels correspond to the space $V$, while $b$ to the space $V^*$.
The diagram above shows that $\lambda$ parametrizes a one dimensional $\TTT$-orbit in $\GM$ with character $\varepsilon_2-\varepsilon_1$.

All of the other $1$-dimensional orbits affect only at most two consecutive irreducible components of a reducible curve representing $p\in\GM^\TTT$, i.e.~are induced from Lagrangian Grassmannians with $n$ at most four. Precisely, we use Lemma \ref{lem:localtoglobal} to induce one dimensional orbits from $\GM$ for $n=2,3,4$ to any other $\GM$. Points (2) and (3) come from the case $n=2$ as in Example \ref{GMn=2}. Points (4) and (5) come from the case $n=3$ as in Example \ref{GMn=3}. We describe in detail point (6). To fix notation we assume $s=0$, $\{i',j'\}=\{1,2\}$, $\{i,j\}=\{3,4\}$ and $n=4$. Consider the two dimensional subset of $\LG$ consisting of all points that may be represented as following $4\times 8$ matrices: 
\begin{align*}
M_{a,b}=\left[
\begin{array}{cccccccc}
1&0&0&0&0&a&0&0\\
0&1&0&0&a&0&0&0\\
0&0&1&0&0&0&0&b\\
0&0&0&1&0&0&b&0\\
\end{array}
\right],\quad a,b\in\CC^*.
\end{align*}
For fixed $a,b$ the unique $\CC^*$-orbit that passes through the point $M_{a,b}\in\LG$ represents a point $p_{a,b}\in \GM$. As the given two dimensional subset of $\LG$ is $\TTT$ invariant, so is the set $\{p_{a,b}\}_{a,b\in\C^*}=\{p_{1,\lambda}\}_{\lambda\in\CC^*}\subset \GM$. It provides a one dimensional $\TTT$-orbit in $\GM$. 
For $t=(t_1,\dots,t_4)\in\TTT$ we have:
$$t\cdot M_{1,\lambda}=   
\left[
\begin{array}{cccccccc}
t_1&0&0&0&0&t_2^{-1}&0&0\\
0&t_2&0&0&t_1^{-1}&0&0&0\\
0&0&t_3&0&0&0&0&t_4^{-1}\lambda\\
0&0&0&t_4&0&0&t_3^{-1}\lambda&0\\
\end{array}
\right]$$
$$=\left[
\begin{array}{cccccccc}
1&0&0&0&0&(t_1t_2)^{-1}&0&0\\
0&1&0&0&(t_1t_2)^{-1}&0&0&0\\
0&0&1&0&0&0&0&(t_3t_4)^{-1}\lambda\\
0&0&0&1&0&0&(t_3t_4)^{-1}\lambda&0\\
\end{array}
\right]=M_{(t_1t_2)^{-1},(t_3t_4)^{-1}\lambda}.
$$
Note that $M_{(t_1t_2)^{-1},(t_3t_4)^{-1}\lambda}$ and $M_{1,t_1t_2(t_3t_4)^{-1}\lambda}$ are in the same $\CC^*$-orbit. Hence, $t\cdot p_{1,\lambda}=p_{1,t_1t_2(t_3t_4)^{-1}\lambda}$, which is the character given in point (6).
\end{proof}

\begin{rem}\label{rem:inf}
In Proposition \ref{prop:orbits} we did \emph{not} describe all $1$-dimensional $\TTT$-orbits in $\GM$, however enough to describe the tangent space at each torus fixed point. The reason is that there are infinitely many such orbits. This is visible already in the case $n=2$ as in Example \ref{GMn=2}. \begin{figure}\label{fig:orbitsGM2}
\includegraphics[scale=0.25]{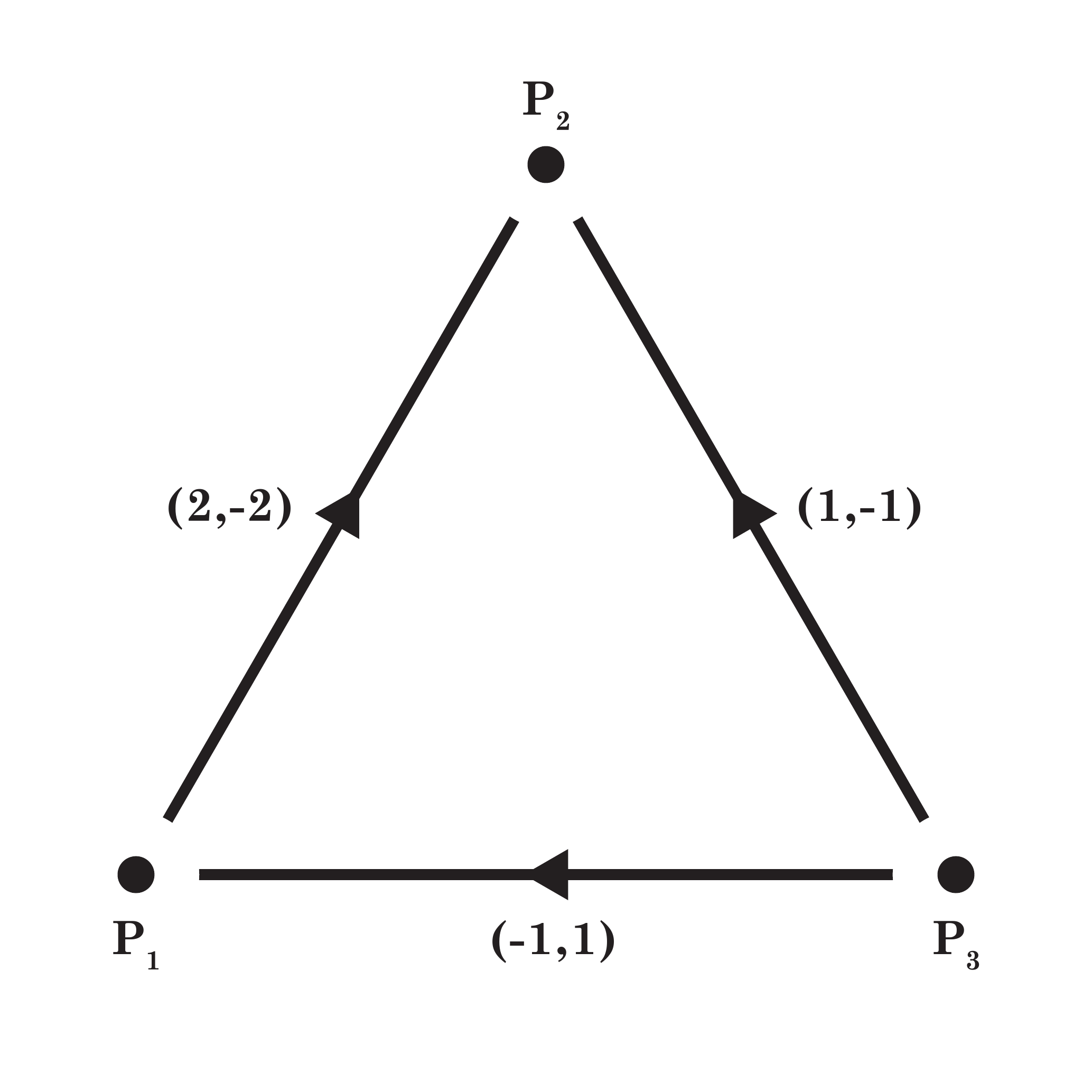}
\includegraphics[scale=0.25]{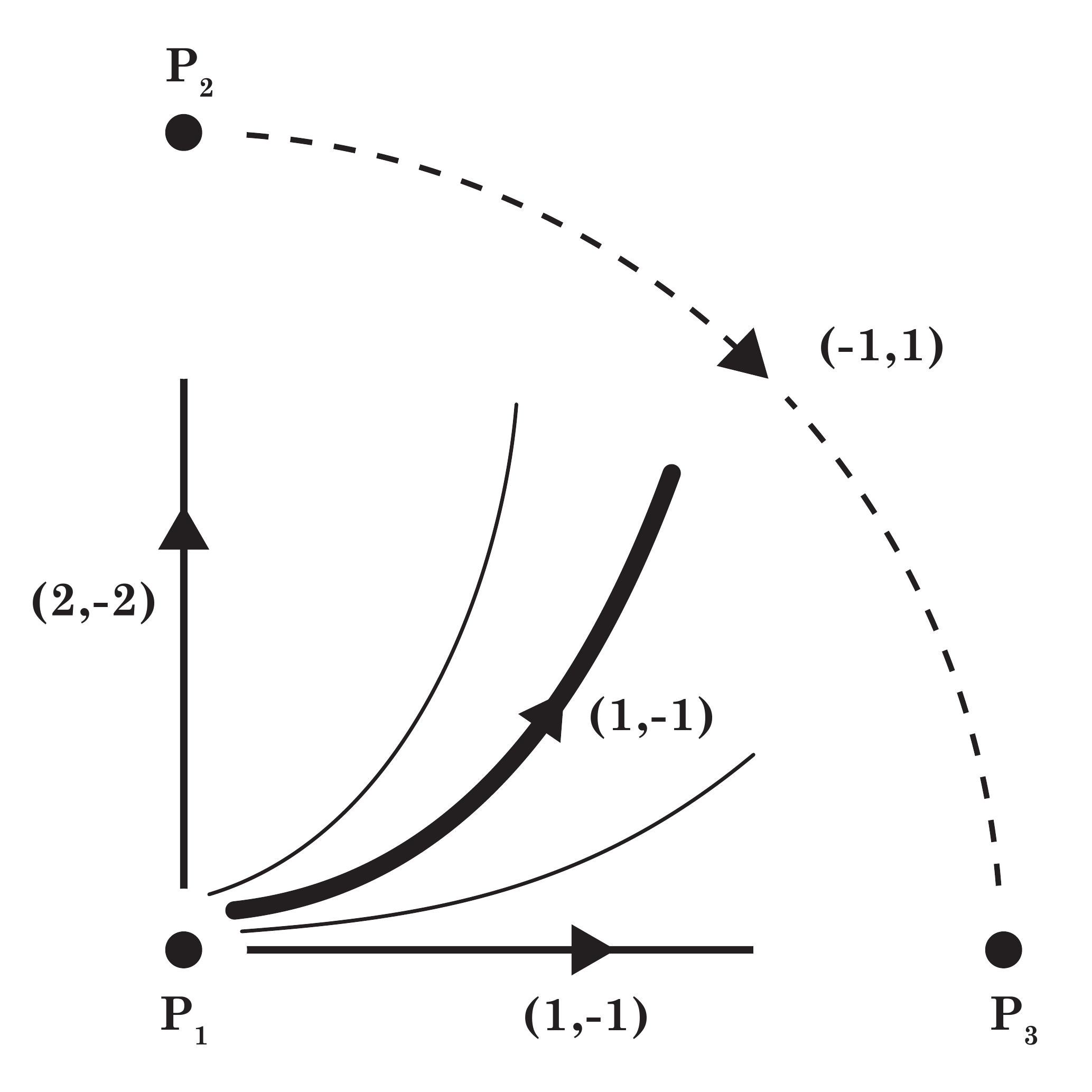}
\caption{Graph representing $\TTT$-orbits and fixed points of the complete quadrics for $n=2$ (left) and actual orbits on $\GM=\PP^2$. The vector $(a,b)$ represents the character $a\varepsilon_1+b\varepsilon_2$.}\label{fig:n=2}
\end{figure}
\end{rem}
\begin{ex}\label{ex:n=3}
In this example we consider the case $n=3$. We present the graph with vertices given by $\TTT$-fixed points in $\GM$---twelve of them---and edges corresponding to one dimensional $\TTT$-orbits---five from every vertex. (If there are infinitely many orbits joining two points, we draw just one edge, as in Remark \ref{rem:inf}.) The result is given in Figure \ref{fig:n=3}. The shaded triangles correspond to $\PP^2$, as in Remark \ref{rem:inf}, by Lemma \ref{lem:localtoglobal}.

Let us focus our attention on the set $V_0$ of six vertices corresponding to complete flags. These are exactly the torus fixed points of the moduli space $\Pe\subset\GM$ corresponding to the restriction of the inversion map to diagonal matrices. If we consider only the edges among $V_0$ that correspond to points of $\Pe$ we obtain the permutohedron, a regular hexagon in our case. This picture is general for any $n$. It represents our philosophy: the moduli of $\CC^*$-orbits on the 'correct' presentation of a rational map as a $\CC^*$-action is the 'correct' resolution of the graph of that map. Our main aim is to exploit the geometry of that moduli to obtain information about the rational map. This approach was extremely successful in case of diagonal matrices \cite{JuneKarim, JunePhD}. However, in that case the permutohedral variety was simply constructed from scratch, without reference to moduli spaces or $\CC^*$-actions.

At the same time, the study of rational maps, through $\CC^*$-actions turned out to be very advantageous \cite{Wlodarczyk}, \cite{Morelli}. We combine both points of views to efficiently perform computations of ML-degree.
\begin{figure}\label{fig:orbitsGM3}
\includegraphics[scale=0.5]{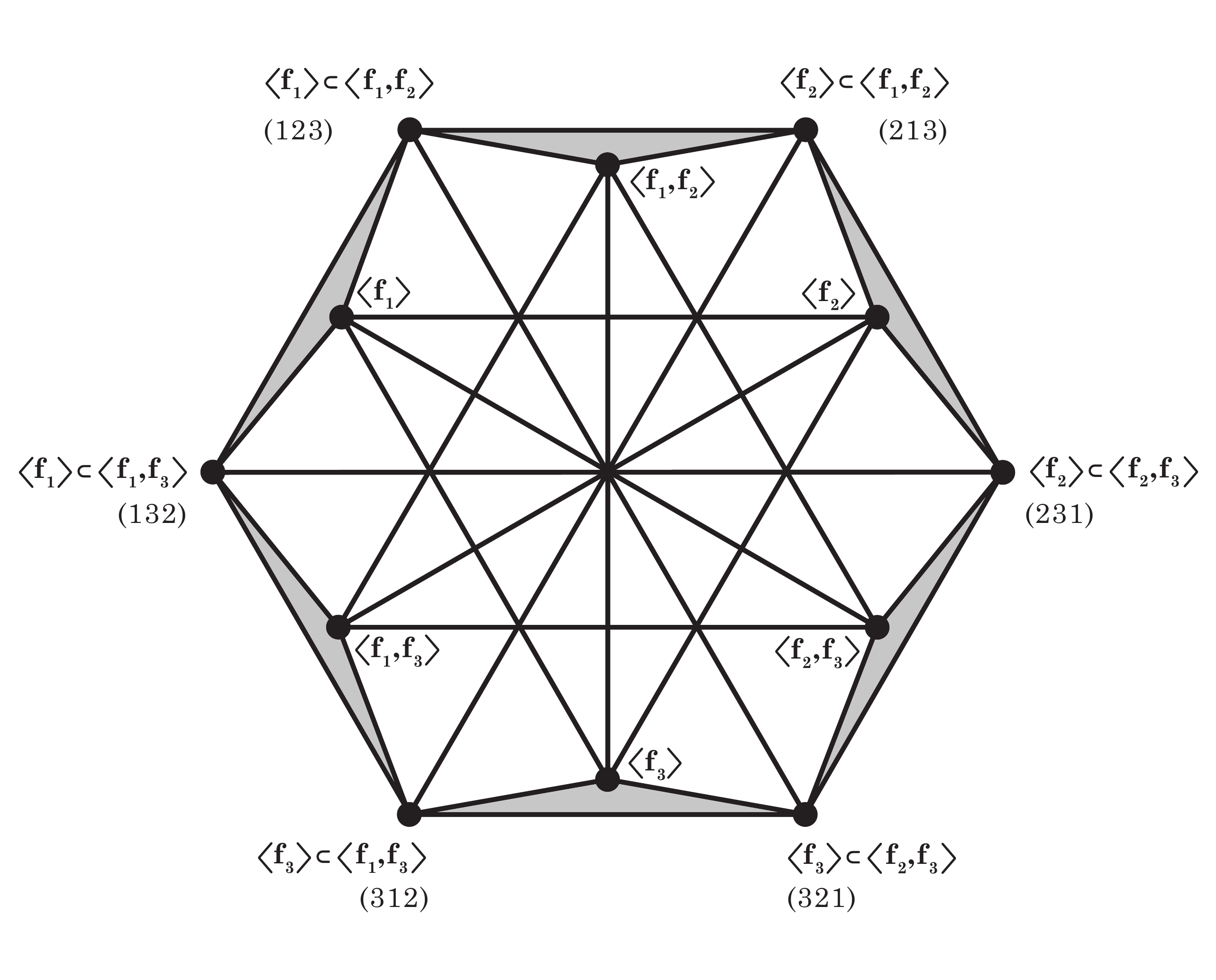}\caption{Graph representing $\TTT$-orbits and fixed points of the complete quadrics for $n=3$}\label{fig:n=3}
\end{figure}
\end{ex}
\begin{rem}\label{rem:inc}
By Corollary~\ref{cor:gaussmoduli} the points in an open set of $\GM$ are in bijection with invertible, symmetric $n\times n$ matrices. A closed subset of this open set corresponds to diagonal nondegenerate matrices. By taking its closure in $\GM$ we obtain the permutohedral variety as in Example \ref{ex:n=3}. For more detailed description of the closure of diagonal matrices in $\GM$ see \cite[Section 5.3]{deconcini1988geometry}.
\end{rem}

\subsection{Using complete quadrics for intersection}\label{sec:gaussint}
The variety of complete quadrics $\GM$ comes with two natural projections: $\pi_0:\GM\rightarrow \PP(S^2V^*)$ and $\pi_\infty:\GM\rightarrow\PP(S^2V)$, cf.~Theorem \ref{thm:orbitsmoduli}. This should be understood as associating to a $\CC^*$-orbit on the blow-up of the Lagrangian Grassmannian respectively its source and sink. Both maps are birational and isomorphism on the locus of matrices of rank at least $n-1$. Let $\Delta$ be the closure of the graph of the rational map given by inverting matrices. We have the following diagram in which $\GM$ is apparently a resolution of
the graph $\Delta$, as discussed in the
introduction:
\[
\begin{tikzcd}
                       &\GM \arrow[rdd, "\pi_\infty"] \arrow[ldd, "\pi_0"'] \arrow[d]&              \\
                      &\Delta  \arrow[rd] \arrow[ld]& \\
\PP(S^2V^*)\arrow[rr, dashrightarrow, "{A\mapsto A^{-1}}"]&             &\PP(S^2V)
\end{tikzcd}
\]

Our intersection problem now translates as follows. Given $a,b$ nonnegative integers summing up to ${{n+1}\choose{2}}-1$ compute $\phi(n,a+1)=\pi_0^{*}(H_1)^{a}\pi_\infty^{*}(H_2)^b$, where $H_1$ and $H_2$ are classes of hyperplanes respectively in $\PP(S^2V^*)$ and $\PP(S^2V)$. Suppose $p\in\GM^\TTT$ is a point given by the flag $\langle f_{i}\rangle\subset\dots\subset\langle f_1,\dots,\widehat{f_j},\dots,f_n\rangle$,  which means that the first (counting from the source) irreducible
component of the stable curve represented by $p$ is a line from the source to the point  $[e_{[n]\setminus\{i\}}\wedge f_{\{i\}}]\in\LG$. Then $\pi_0(p)$ equals $f_i\cdot f_i\in \PP(S^2(V^*))$. Similarly, if the flag begins
with $\langle f_i,f_{i'}\rangle$ (i.e.~the first component is a conic) then $\pi_0(p)=f_i\cdot f_{i'}\in\PP(S^2V^*)$. The
same applies for the last elements in the flag and the map $\pi_\infty$.
We start our computations in $\TTT$-equivariant $K$-theory of $\GM$. We use the notation from Section \ref{sec:K-th}.

We know that to represent $\mathcal{O}(1)$ on $\PP(S^2V)$ as an element of $K$-theory, as in Theorem \ref{thm:loc1}, we need to take $t_jt_{j'}$ over $e_je_{j'}\in \PP(S^2V)^{\TTT}$. Further, we have an exact sequence:
$$
0\longrightarrow\mathcal{O}(-1)\longrightarrow\mathcal{O}\longrightarrow \mathcal{O}_H\longrightarrow 0.
$$
Thus, as an element of $\TTT$-equvariant $K$-theory on $\GM$ the pull-back $\pi_\infty^{*}(H_2)$ is represented over a point $p\in\GM^\TTT$ as $1-t_jt_{j'}$ where $j,j'$ are the two elements missing in the last flag element associated to $p$ or $j=j'$ if only one element is missing. We have proved the following proposition.

\begin{prop}\label{prop:class}
Let $p\in \GM^\TTT$ be a $\TTT$-fixed point. Then, as a class in $\TTT$-equivariant $K$-theory restricted to $p\in \GM^\TTT$, the monomial $\pi_0^{*}(H_1)^a\pi_\infty^{*}(H_2)^b$ is equal to Laurent polynomial: 
$$
(1-t_i^{-1}t_{i'}^{-1})^a(1-t_jt_{j'})^b,
$$
 where $i,i'$ (resp.~$j,j'$) are the two entries (resp.~missing) in the first (resp.~last) element in the flag representing $p$. If there is only one entry (resp.~missing) we take $i=i'$ (resp.~$j=j'$). 

\end{prop}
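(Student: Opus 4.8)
The plan is to reduce the statement to three ingredients: the ring-homomorphism property of restriction to a fixed point, functoriality of equivariant pullback, and the elementary $K$-theory of projective space. All of the geometric content is already supplied by the identification of the images $\pi_0(p)$ and $\pi_\infty(p)$ carried out in the discussion preceding the statement. First I would record that, by Theorem \ref{thm:loc1}, each restriction $i_p^*\colon K^0_\TTT(\GM)\to K^0_\TTT(p)$ is a homomorphism of rings; consequently
$$i_p^*\bigl(\pi_0^*(H_1)^a\,\pi_\infty^*(H_2)^b\bigr)=\bigl(i_p^*\pi_0^*(H_1)\bigr)^{a}\,\bigl(i_p^*\pi_\infty^*(H_2)\bigr)^{b},$$
so it suffices to evaluate the two factors $i_p^*\pi_0^*(H_1)$ and $i_p^*\pi_\infty^*(H_2)$ separately.

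Next I would invoke functoriality of pullback. The morphisms $\pi_0\colon\GM\to\PP(S^2V^*)$ and $\pi_\infty\colon\GM\to\PP(S^2V)$ of this subsection are $\TTT$-equivariant and regular at $p$ (they resolve the inversion map, cf.~Corollary \ref{cor:gaussmoduli}), hence carry $p$ to $\TTT$-fixed points $\pi_0(p)\in\PP(S^2V^*)^\TTT$ and $\pi_\infty(p)\in\PP(S^2V)^\TTT$. Writing $i_p\colon\{\mathrm{pt}\}\hookrightarrow\GM$ for the inclusion of $p$, equivariance gives $\pi_\bullet\circ i_p=i_{\pi_\bullet(p)}$, so functoriality yields $i_p^*\pi_0^*=i_{\pi_0(p)}^*$ and $i_p^*\pi_\infty^*=i_{\pi_\infty(p)}^*$. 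Both computations thereby transfer to the two ambient projective spaces, whose torus fixed points are the coordinate monomials.

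The geometric input is the identification of these two images, established above: the first component of the chain representing $p$ is the line (resp.~conic) emanating from the source, so $\pi_0(p)=[f_i f_{i'}]$, where $i,i'$ are the entries of the first flag element, with $i=i'$ when that element is a single $\langle f_i\rangle$ and the component is a line; dually $\pi_\infty(p)=[e_j e_{j'}]$, where $j,j'$ are the entries missing from the last flag element. It then remains to restrict the hyperplane class to a coordinate fixed point $[w]$ of a $\PP(W)$: from the exact sequence $0\to\mathcal{O}(-1)\to\mathcal{O}\to\mathcal{O}_H\to 0$ and the fact that the tautological line $\mathcal{O}(-1)|_{[w]}$ is the weight line $\CC\,w$ with its natural character, one gets $i_{[w]}^*[\mathcal{O}_H]=1-\chi$, where $\chi$ is the character of $w$. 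Since $\TTT\subset SL(V)$ acts on $e_i$ by $t_i$, the monomial $f_i f_{i'}$ has character $t_i^{-1}t_{i'}^{-1}$ and $e_j e_{j'}$ has character $t_j t_{j'}$; thus $i_p^*\pi_0^*(H_1)=1-t_i^{-1}t_{i'}^{-1}$ and $i_p^*\pi_\infty^*(H_2)=1-t_j t_{j'}$. Substituting into the displayed product gives the asserted Laurent polynomial.

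I expect no genuine obstacle once the images $\pi_0(p),\pi_\infty(p)$ are pinned down; the only points requiring care are bookkeeping ones. One must keep the two dual conventions straight, so that $\pi_0$ lands in $\PP(S^2V^*)$ and contributes inverse characters $t_i^{-1}t_{i'}^{-1}$ while $\pi_\infty$ lands in $\PP(S^2V)$ and contributes $t_j t_{j'}$, and one must handle the degenerate case of a single entry, where a line rather than a conic forces $i=i'$ (resp.~$j=j'$). The one subtlety worth double-checking is that the equivariant linearizations on the two copies of $\mathcal{O}(1)$ are the ones compatible with the $SL(V)$-action of Corollary \ref{cor:gaussmoduli}, so that the characters emerge with the correct signs.
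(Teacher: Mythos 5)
Your proposal is correct and takes essentially the same route as the paper: identify $\pi_0(p)=[f_if_{i'}]$ and $\pi_\infty(p)=[e_je_{j'}]$ from the first and last components of the chain representing $p$, then use the sequence $0\to\mathcal{O}(-1)\to\mathcal{O}\to\mathcal{O}_H\to 0$ to restrict the hyperplane classes at those fixed points, yielding $1-t_i^{-1}t_{i'}^{-1}$ and $1-t_jt_{j'}$. The only difference is presentational: you make explicit the ring-homomorphism property of $i_p^*$ and the functoriality identity $i_p^*\pi_\bullet^*=i_{\pi_\bullet(p)}^*$, which the paper's argument uses implicitly.
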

Our next aim is to push-forward the class from the previous proposition to a point. As we know, by Proposition \ref{prop:orbits}, the compasses at each point $p\in\GM^{\TTT}$ this is straightforward from Theorem \ref{thm:loc}.
\begin{cor}\label{cor:sum}
Let $\pi:\GM\rightarrow \pt$ be the trivial map to the point $\pt$ with the trivial $\TTT$-action. The class $\pi_*(\pi_0^{*}(H_1)^a\pi_\infty^{*}(H_2)^b)$ is represented as a sum of rational functions $r_p(t_1,\dots,t_n)$ indexed by $p\in\GM^\TTT$. Using notation of Proposition \ref{prop:class} each $r_p$ is a fraction that has $(1-t_i^{-1}t_{i'}^{-1})^a(1-t_jt_{j'})^b$ in the numerator and a product $\prod_{i=1}^{{{n+1}\choose{2}}-1}(1-\tilde\Chi_i)$ in the denominator, where $\tilde\Chi_i$ are the characters in the compass of $p$ described in Proposition \ref{prop:orbits}.

The resulting sum is a Laurent polynomial. The ML-degree $\phi(n,a+1)$ equals the evaluation of that polynomial at $t_1=t_2=\dots=t_n=1$.
\end{cor}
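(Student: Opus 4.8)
The plan is to read the formula off directly from the localization pushforward of Theorem~\ref{thm:loc}, and then to identify the value of the resulting Laurent polynomial at $t_1=\dots=t_n=1$ with the top intersection number $\pi_0^{*}(H_1)^a\pi_\infty^{*}(H_2)^b$, which by Corollary~\ref{cor:ML} is exactly $\phi(n,a+1)$. First I would apply Theorem~\ref{thm:loc} to the class $\mathcal{F}=\pi_0^{*}(H_1)^a\pi_\infty^{*}(H_2)^b\in K_\TTT^0(\GM)$, using that $\GM$ is a smooth projective variety (Corollary~\ref{cor:gaussmoduli}) with finitely many $\TTT$-fixed points (Proposition~\ref{prop:fixedpoints}), so that the hypotheses of Theorems~\ref{thm:loc1} and~\ref{thm:loc} are met after dividing $\TTT$ by the finite subgroup acting trivially. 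The restriction $g_p$ of $\mathcal{F}$ to a fixed point $p$ is computed in Proposition~\ref{prop:class} to be $(1-t_i^{-1}t_{i'}^{-1})^a(1-t_jt_{j'})^b$, and the compass of $\TTT$ at $p$, i.e.~the list $\tilde\Chi_1,\dots,\tilde\Chi_{{{n+1}\choose 2}-1}$ of tangent characters, is given exhaustively in Proposition~\ref{prop:orbits}. Substituting these data into~(\ref{eq:push}) yields precisely the sum $\sum_{p}r_p$ with the stated numerator and denominator, and the final clause of Theorem~\ref{thm:loc} guarantees that this sum, a priori only a rational function, is in fact a Laurent polynomial. This settles the first part of the statement.

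For the evaluation at $t_1=\dots=t_n=1$ I would argue that this specialisation computes the honest intersection number. The forgetful homomorphism $K_\TTT^0(\GM)\to K^0(\GM)$ commutes with proper pushforward, and on the representation ring $K_\TTT^0(\pt)$ it is the augmentation, which sends every character to $1$, i.e.~sets $t_i=1$ for all $i$; hence $\pi_*(\mathcal{F})|_{t_1=\dots=t_n=1}$ is the ordinary Euler characteristic $\chi(\GM,\mathcal{F})$, where now $\mathcal{F}$ denotes the nonequivariant $K$-theory class $\pi_0^{*}(H_1)^a\pi_\infty^{*}(H_2)^b$. It then remains to show that $\chi(\GM,\mathcal{F})$ equals the top intersection number $\pi_0^{*}(H_1)^a\pi_\infty^{*}(H_2)^b$. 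This is a Hirzebruch--Riemann--Roch computation together with a dimension count: by the exact sequence used before Proposition~\ref{prop:class}, each hyperplane class is represented by $1-[\cO(-1)]$, whose Chern character $1-e^{-h}$ has lowest-degree part equal to the divisor class $h$; since $\mathcal{F}$ is a product of exactly $a+b={{n+1}\choose 2}-1=\dim\GM$ such factors, the lowest-degree, hence top-dimensional, part of $\mathrm{ch}(\mathcal{F})$ is the product of the corresponding divisor classes, while all higher-degree terms vanish for dimension reasons. Pairing with $\mathrm{td}(\GM)=1+\cdots$, only its constant term $1$ contributes, so $\chi(\GM,\mathcal{F})=\int_{\GM}\mathrm{ch}(\mathcal{F})\,\mathrm{td}(\GM)$ equals the intersection number $\pi_0^{*}(H_1)^a\pi_\infty^{*}(H_2)^b$, which is $\phi(n,a+1)$ by Corollary~\ref{cor:ML}.

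The bookkeeping in the first paragraph is mechanical; the one point that deserves care is that the individual summands $r_p$ all have poles at $t_1=\dots=t_n=1$, since every character $\tilde\Chi_i$ specialises to $1$ there. Thus the phrase \emph{evaluation at $t_i=1$} is meaningful only for the sum, after the cancellation guaranteed by Theorem~\ref{thm:loc} has taken place; one must sum first and specialise afterwards, rather than specialise termwise. The genuine mathematical content, and the step I expect to require the most attention, is the passage from the $K$-theoretic Euler characteristic to the honest intersection number in the second paragraph: it relies crucially on the number of hyperplane factors matching $\dim\GM$, which forces the Todd correction to drop out, and it is here that Corollary~\ref{cor:ML}(4) is invoked to name the answer $\phi(n,a+1)$.
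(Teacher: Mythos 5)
Your proposal is correct and follows essentially the same route as the paper: the paper's proof of this corollary is precisely the application of the localization pushforward of Theorem~\ref{thm:loc} with the fixed-point restrictions from Proposition~\ref{prop:class} and the compasses from Proposition~\ref{prop:orbits}, together with the identification $\phi(n,a+1)=\pi_0^{*}(H_1)^{a}\pi_\infty^{*}(H_2)^{b}$ coming from Corollary~\ref{cor:ML}. The one place you go beyond the paper is in making explicit the step it leaves implicit---that the augmentation $t_i\mapsto 1$ commutes with proper pushforward and that Hirzebruch--Riemann--Roch, using $a+b=\dim\GM$ so the Todd correction drops out, turns the nonequivariant Euler characteristic into the honest intersection number---which is a welcome clarification rather than a different method.
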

\begin{rem}
Note that it is not possible to first substitute for $t_i$ value one in Corollary \ref{cor:sum} in the expression for $r_p$ as this would lead to zero in the denominator. At this point one needs to first sum up the rational functions and only then substitute. We will see how to avoid this complicated computation later.
\end{rem}
\begin{ex}\label{GMn=3.2} We continue Example \ref{GMn=3}
considering $n=3$. Let $p$ correspond to the flag $\langle f_1\rangle\subset \langle f_1,f_2\rangle$. Let $a=2$ and $b=3$.  The rational function $r_p$ from Proposition \ref{prop:class} equals:
$$\frac{(1-t_1^{-2})^2(1-t_3^2)^3}{(1-\frac{t_1}{t_2})(1-\frac{t_1}{t_3})(1-\frac{t_2}{t_3})(1-\frac{t_1^2}{t_2^2})(1-\frac{t_2^2}{t_3^2})}.$$
In the denominator we have a product of five functions corresponding respectively to weights: $\varepsilon_1-\varepsilon_2$, $\varepsilon_1-\varepsilon_3$, $\varepsilon_2-\varepsilon_3$,$2\varepsilon_1-2\varepsilon_2$ and $2\varepsilon_2-2\varepsilon_3$ as in Proposition \ref{prop:orbits}.
\end{ex}
Computations with rational functions in many variables, although basic, are hard. To simplify them one first considers a subtorus $\CC^*\simeq \TTT'\subset \TTT$ and passes from $\TTT$-action to $\TTT'$ action. We note that $\TTT'$ is not the $\CC^*$-action we started from. Quite the opposite we would like to first choose $\TTT'$ generic, so that we do not obtain additional fixed points on $\GM$/zeros in the denominator in the formula in Corollary \ref{cor:sum}. This means that we have a map $t_i\mapsto t^{a_i}$, where $a_i$ are some general integers. Here, explicitly general means that $a_i+a_{i'}=a_{j}+a_{j'}$ must imply $\{i,i'\}=\{j,j'\}$. Using notation from Corollary \ref{cor:sum} each $r_p$ becomes a rational function in the variable $t$, in which both the numerator and the denominator is a product of functions of type: one minus $t$ to an integral, nonzero power. The final formula for the ML-degree is now a sum of those functions (which is a Laurent polynomial in $t$) after substitution $t=1$.

At this point we have reduced the number of variables to one. However, it is still not possible to substitute $t=1$ in the formula for $r_p$ as this leads to zero in the denominator. Hence, we would still have to perform costly operation of adding rational functions. Andrzej Weber suggested us to do the computation not in equivariant $K$-theory but in equivariant cohomology \cite{AtBo, anderson2012introduction, We}. In our case it is equivalent to a very easy to prove formula:

$$\lim_{t\rightarrow 1}\frac{\prod_{i=1}^d (1-t^{b_i})}{\prod_{j=1}^d (1-t^{c_i})}=\frac{\prod_{i=1}^d b_i}{\prod_{i=1}^d c_i}.$$

Hence, although substitution $t=1$ for $r_p$ does not make sense, the limit $t\rightarrow 1$ exists and is simply a fraction of products of exponents of $t$, which are linear functions of variables $a_i$. From theory we know that after we add those limits the result is a constant, that does not depend on a choice of particular $a_i$'s and equals the ML-degree. Hence, we may simply substitute $a_i=2^i$ in each formula for $\lim_{t\rightarrow 1} r_p$. Each limit is now a number (which depends on our choice of $a_i$'s) and so is the sum (which does not depend on the choice). This computation may be performed very efficiently and the complexity of computation is the number of points in $\GM^\TTT$. By Corollary \ref{cor:number_of_fixptsGM} this complexity is of the order of $n!$, which of course is large. Still, our implementations work well for many $n$, going beyond results known so far.

\section{Algorithms, computations and conclusions}
We have implemented two algorithms to compute the ML degree $\phi(n,a)$. One is written in Sage \cite{sagemath} and one in Macaulay2 \cite{M2}. The implementations are available at \cite{web}.
As adding rational functions is computationally hard, both algorithms apply the reduction to operations on numbers described at the end of Section \ref{sec:K-thGM}.

The major steps for the algorithms are:
\begin{enumerate}
\item Creating the moment graph $G$ representing the $\TTT$-equivariant $K$-theory of the complete quadrics;
\item Summing over the vertices of the graph $G$ to obtain the value of $\phi(n,a)$. 
\end{enumerate}
The main difference among the programs is step $(1)$. In Sage we use the recursive formula for the construction of $G$ on $n$. For small $n$ we explicitly input the graph and for larger $n$ we proceed as described in Lemma \ref{lem:localtoglobal}. Hence, this option is best if we want to compute $\phi(n,a)$ for all values of $a$ and bounded $n$. 

In Macaulay2 we apply directly the build-in command to list all partitions of $n$ with size at most two and for each partitions we directly list all possible corresponding vertices. 

In both cases we remember the sequence of characters associated to a given vertex of $G$, described in Proposition \ref{prop:orbits}. The programs are available on \cite{sagemath}. The Sage version directly outputs $\phi(n,a)$. The Macaulay2 version is called by typing $phi(n,a+1)$ for any integers $a,n$. 

We hope that our approach will lead to further advances on the ML degree. However, currently the programs are very useful for making predictions. For example we have the following conjecture.

\begin{conj}\label{conj:form}
$$\phi(n,6)=\frac{1}{120}(n-1)(n-2)(43n^3-221n^2+316n-60)$$
$$\phi(n,7)=\frac{1}{60}(n-1)(n-2)(n-3)(12n^3-57n^3+81n-10)$$
$$\phi(n,8)=\frac{1}{840}(n-1)(n-2)(n-3)(87n^4-654n^3+1755n^2-1844n+140)$$
$$\phi(n,9)=\frac{1}{3360}(n-1)(n-2)(n-3)(169n^5-1770n^4+7163n^3-14042n^2+12136n-560)$$
\[\phi(n,10)=\frac{1}{362880}(n-1)(n-2)(n-3)\]\[(8357n^6-114126n^5+629471n^4-1816902n^3+2911016n^2-2201088n+60480)\]
\[\phi(n,11)=\frac{1}{907200}(n-1)(n-2)(n-3)(n-4)\]
\[(9053n^6-118395n^5+625700n^4-1749975n^3+2847707n^2-2352810n+37800)\]
\[\phi(n,12)=\frac{1}{9979200}(n-1)(n-2)(n-3)(n-4)\]\[(40993n^7-685483n^6+4763290n^5-17995750n^4+41239027n^3-59728927n^2+45442410n-415800)\]
\end{conj}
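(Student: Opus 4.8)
Since the statement predicts closed forms for $\phi(n,a)$, $6\le a\le 12$, and the values themselves are already accessible through Corollary \ref{cor:sum}, the plan is to reduce the conjecture to two essentially independent tasks: proving that $n\mapsto\phi(n,a)$ is a polynomial in $n$ of degree exactly $a-1$ (this is Conjecture \ref{conj:poly}), and then pinning that polynomial down by interpolation from finitely many computed values. I would first set up the computation exactly as in Corollary \ref{cor:sum}: for fixed $a$, so that $b=\binom{n+1}{2}-a$, the number $\phi(n,a)$ is the $t\to1$ limit of the sum over $p\in\GM_n^\TTT$ of ratios $r_p$ whose numerator is $(1-t_i^{-1}t_{i'}^{-1})^{a-1}(1-t_jt_{j'})^{b}$ and whose denominator is the product of $(1-\tilde\Chi_\ell)$ over the compass characters listed in Proposition \ref{prop:orbits}. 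After specializing $t_i\mapsto t^{a_i}$ with generic $a_i$ and applying $\lim_{t\to1}\prod(1-t^{\beta})/\prod(1-t^{\gamma})=\prod\beta/\prod\gamma$, each $r_p$ becomes a ratio of products of integer linear forms in the $a_i$.

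The polynomiality step is where I expect the real obstacle to lie, and it splits into two sub-problems. The fixed points are indexed (Proposition \ref{prop:fixedpoints}) by ascending chains of subsets of $[n]$ with consecutive gaps of size one or two; I would group the fixed-point sum by the \emph{combinatorial type} $\tau$ of the chain, namely the ordered sequence of gap sizes together with the line/conic data at the two ends feeding $\pi_0$ and $\pi_\infty$. For a fixed type $\tau$ one is summing a single rational expression over all injective labelings of $\tau$ into $[n]$, and the hope is that such ``incidence sums'' of bounded complexity are polynomial in $n$ by a finite-difference argument, with the binomial counts of labelings supplying the polynomial dependence. The two genuinely delicate points are (i) handling uniformly in $n$ the apparent poles that arise when indices coincide across different gaps, which genericity of the $a_i$ controls at the level of $\GM_n$ but must be shown to cancel type-by-type, and (ii) the \emph{degree bound}: one must prove that no type contributes a term of degree larger than $a-1$ in $n$, and that the leading contributions do not cancel. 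A cleaner route to the degree bound may be to reinterpret $\phi(n,a)$ as $\deg X_\LLL\subset\PP(S^2V)$ via Corollary \ref{cor:ML}(2) and bound the growth of this degree directly, or to invoke the cohomological description of $\GM$ promised for the authors' forthcoming work.

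Granting degree-$(a-1)$ polynomiality, the remainder is finite bookkeeping. The factors $(n-1),(n-2),\dots$ appearing in every formula are forced vanishings that I can establish rigorously and for free: whenever $\binom{n+1}{2}<a$ there is no generic $a$-dimensional subspace $\LLL\subset S^2V$, so $\phi(n,a)=0$, and counting such small $n$ reproduces exactly the displayed linear factors. Extracting these $\rho$ roots leaves a polynomial of degree $(a-1)-\rho$, which is determined by $(a-\rho)$ further values. I would then run the algorithm of Section \ref{sec:gaussint} to compute $\phi(n,a)$ for the remaining needed $n$ — cross-checking $n\le 6$ against Theorem \ref{thm:known}(4) and \cite{StUh} — interpolate, clear denominators, and verify agreement coefficient-by-coefficient with the stated formulas.

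In summary, the mathematical content of a proof lies entirely in the second step: once polynomiality in $n$ with the sharp degree $a-1$ is in hand, each formula follows from the rigorous vanishing at small $n$ together with a finite, machine-checkable evaluation of the fixed-point sum of Corollary \ref{cor:sum}. The only practical, rather than conceptual, limitation is that $\lvert\GM_n^\TTT\rvert$ grows like $n!$ by Corollary \ref{cor:number_of_fixptsGM}; and the reason the statement is a conjecture rather than a theorem is precisely that the polynomiality-with-degree-bound of Conjecture \ref{conj:poly} is not yet available.
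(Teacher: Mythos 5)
This statement is a \emph{conjecture} in the paper, so there is no proof of it to compare against; the authors' only stated route is precisely yours, namely to derive the formulas from Conjecture~\ref{conj:poly} by evaluating $\phi(n,a)$ at finitely many $n$ via the fixed-point formula of Corollary~\ref{cor:sum} and interpolating. Your diagnosis that the sole missing ingredient --- and the reason the statement is a conjecture rather than a theorem --- is the degree-$(a-1)$ polynomiality of Conjecture~\ref{conj:poly} matches the paper exactly, and your sketched attack on polynomiality (grouping the fixed points of Proposition~\ref{prop:fixedpoints} by combinatorial type, finite differences) is extra material the paper does not contain, correctly flagged by you as not carried out.

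One step of your bookkeeping, however, is overstated and, as you use it, is a genuine gap even in the conditional argument. You claim the linear factors $(n-1)(n-2)\cdots$ are ``forced vanishings'' obtained ``rigorously and for free'' because no generic $a$-dimensional $\Lambda\subset S^2V$ exists when $\binom{n+1}{2}<a$. But for such $n$ the ML-degree is \emph{undefined}, not zero; it equals zero only under a convention (e.g.\ reading Corollary~\ref{cor:ML}(4) as assigning coefficient $0$ to a nonexistent monomial). Conjecture~\ref{conj:poly} asserts that some polynomial $P_a$ of degree $a-1$ agrees with $\phi(n,a)$ on the range $\binom{n+1}{2}\ge a$ where $\phi$ is defined; nothing in that statement forces $P_a$ to vanish at integer points outside this range, so those points are not legitimate interpolation nodes. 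Since your plan divides out these $\rho$ roots and then computes only $a-\rho$ further values, it could output a wrong polynomial even granting Conjecture~\ref{conj:poly}. The repair is cheap: interpolate from $a$ genuine values with $\binom{n+1}{2}\ge a$ (which is what the paper's algorithms permit), after which the vanishing at small $n$ becomes an a posteriori feature of the formulas rather than an input. Doing this honestly would also expose that the printed formula for $\phi(n,7)$ contains a typo --- it has two $n^3$ terms, and as printed it gives $\phi(4,7)=-2566/10$, which is not even an integer; the second term should be $57n^2$.
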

It is very easy to derive Conjecture \ref{conj:form} from Conjecture \ref{conj:poly} using our algorithms. We are also convinced that our methods allow, at the very least, to prove the formula for $\phi(n,6)$. It is interesting to note that in all known cases $\phi(0,a+1)=(-1)^a$ and $\phi(-1,a+1)=(-2)^a$ when we extrapolate it as a polynomial. It is easy to understand the term $(n-1)(n-2)\cdots$ in $\phi(n,a+1)$, as the function must vanish for fixed $a$ for small values of $n$, precisely when $a\geq{{n+1}\choose{2}}$. The remaining polynomials are quite mysterious. Comparing Theorem \ref{thm:known} and Conjecture \ref{conj:form} we see that their coefficients are alternating in signs, with log-concave absolute values, but the polynomials are not always real rooted. 

\begin{question}
To which complexity class does the problem of computing $\phi(n,a)$ belong?
\end{question}

\bibliography{bibML}
\bibliographystyle{plain}
\end{document}